\numberwithin{equation}{section}
\newcommand{\bburl}[1]{\textcolor{blue}{\url{#1}}}
\newcommand\Item[1][]{%
  \ifx\relax#1\relax  \item \else \item[#1] \fi\abovedisplayskip=0pt\abovedisplayshortskip=0pt~\vspace*{-\baselineskip}}
\newcommand\be{\begin{equation}}
	\newcommand\ee{\end{equation}}
\newcommand\bi{\begin{itemize}}
	\newcommand\ei{\end{itemize}}
\newcommand\ben{\begin{enumerate}}
	\newcommand\een{\end{enumerate}}
\newtheorem{thm}{Theorem}[section]
\newtheorem{cor}[thm]{Corollary}
\newtheorem{lem}[thm]{Lemma}
\newtheorem{rem}[thm]{Remark}
\newtheorem*{hypoh}{Hypothesis $\bm{\mathrm{H}}_{\pi}$}
\newtheorem*{hypoA}{Hypothesis $\bm{\mathrm{A}}$}
\newtheorem{prop}[thm]{Proposition}
\theoremstyle{remark}
\newcommand{\bs}{\backslash}
\DeclareMathOperator{\GL}{GL}
\DeclareMathOperator{\Real}{Re}
\DeclareMathOperator{\Imag}{Im}
\DeclareMathOperator{\N}{\mathbb{N}}
\DeclareMathOperator{\der}{der}
\DeclareMathOperator{\cond}{cond}
\let\@wraptoccontribs\wraptoccontribs
\begin{document}
\title[Zeros of Polynomials in Derivatives of Automorphic $L$-functions]{Zeros of Polynomials in Derivatives of Automorphic $L$-functions 
}
\author{Anji Dong}
\address{Anji Dong: 
Department of Mathematics, University of Illinois Urbana-Champaign, Altgeld hall, 1409 W. Green Street, Urbana, IL, 61801, USA}
\email{anjid2@illinois.edu}

\author{Nawapan Wattanawanichkul}
\address{Nawapan Wattanawanichkul: Department of Mathematics, University of Illinois Urbana-Champaign, Altgeld hall, 1409 W. Green Street, Urbana, IL, 61801, USA}
\email{nawapanwattanawanichkul4@gmail.com}
    
\author{Alexandru Zaharescu}
\address{Alexandru Zaharescu: Department of Mathematics, University of Illinois Urbana-Champaign, Altgeld hall, 1409 W. Green Street, Urbana, IL, 61801, USA \and Simion Stoilow Institute of Mathematics  of the Romanian Academy, P.O. Box 1-764, RO-014700 Bucharest, Romania}
\email{zaharesc@illinois.edu}
\keywords{Automorphic $L$-functions, derivatives of $L$-functions, algebraic combinations, zeros near the critical line}

\subjclass{Primary: 11F66;  Secondary:   11M26.}
\begin{abstract}
 Let $\mathfrak{F}_m$ be the set of all cuspidal automorphic representations of $\mathrm{GL}_m(\mathbb{A}_{\mathbb{Q}})$, and let $F(s,\bm{\pi})$ be a polynomial in the derivatives of $L$-functions associated with representations $\pi  \in \bigcup_{m=1}^{\infty} \mathfrak{F}_m$. We establish an asymptotic formula for the number of nontrivial zeros of $F(s,\bm{\pi})$ with $0 < \operatorname{Im}(s) < T$. We explicitly determine the main term of this formula in terms of the dimensions, the arithmetic conductors, and the orders of differentiation of the component $L$-functions. Furthermore, we show that, under certain conditions, almost all nontrivial zeros of $F(s,\bm{\pi})$ lie near the critical line $\operatorname{Re}(s)=1/2$.
\end{abstract}
\maketitle
\setcounter{tocdepth}{1}

\tableofcontents
\section{Introduction and summary of main results}\label{sec:intro}

\subsection{Notation and setup} A central problem in number theory is to understand the distribution of zeros of the Riemann zeta function $\zeta(s)$. It is known that 
 $\zeta(s)$ does not have zeros in the half-plane $\Real(s) \ge 1$, and its only zeros in the half-plane $\Real(s) \le 0$ are at negative even integers, known as the \emph{trivial zeros}. Consequently, all other zeros, known as the \textit{nontrivial zeros}, lie within the critical strip,
$0 < \Real(s) < 1$.  The Riemann Hypothesis (RH) asserts that all nontrivial zeros of  $\zeta(s)$  lie on the critical line $\Real(s) = 1/2$, a conjecture that remains unproven. 
By contrast, the global distribution of these zeros is well understood; the number of nontrivial zeros $\zeta(s)$, with $0 < \Imag(s) < T$, denoted by $N(0,T)$ and counted with multiplicity, satisfies the classical asymptotic formula
\[
    N(0,T) = \frac{T}{2\pi}\log\frac{T}{2\pi e} + O(\log T).
\]

Motivated by the classical result, our first goal in this paper is to investigate the analogue of the above formula within a much broader setting.
Let $\mathbb{A}_{\mathbb{Q}}$ be the ring of ad\`{e}les over $\mathbb{Q}$. For any integer $m \ge 1$, let $\mathfrak{F}_m$ be the set of all cuspidal automorphic representations of $\mathrm{GL}_m(\mathbb{A}_{\mathbb{Q}})$ whose central characters are unitary and normalized to be trivial on the diagonally embedded positive reals. 
 
 We let $\mathfrak{S}$ denote the set of all cuspidal automorphic $L$-functions $L(s,\pi)$ associated with  $\pi \in \mathfrak{F}_{m}$ for $m\in\N$, that is, 
 \[
 \mathfrak{S} = \bigcup_{m=1}^{\infty}\bigcup_{\pi \in \mathfrak{F}_m} \{L(s,\pi)\}.
 \]
 We define $\mathcal{A}$ to be the smallest $\mathbb{C}$-algebra that contains $\mathfrak{S}$. The elements of $\mathcal{A}$ are polynomials in finitely many $L$-functions in $\mathfrak{S}$ with complex coefficients, that is, 
\[
\mathcal{A} = \Big\{ \sum_{j=1}^{M} c_{j} \prod_{u=1}^V L(s,\pi_u)^{d_{u,j}} : M,V \in  \mathbb{N},\, c_{j} \in \mathbb{C}\bs\{0\},\, d_{u,j} \in \mathbb{N}\cup\{0\},\ L(s,\pi_u)\in\mathfrak{S}  \Big\}.
\]
Furthermore, we define $\mathcal{B}$ to be the smallest $\mathbb{C}$-algebra containing $\mathcal{A}$ that is also closed under \textit{differentiation} (with respect to the complex variable $s$). For any integer $l \ge 0$, let $L^{(l)}(s, \pi_u)$ denote the $l\text{-th}$ derivative of $L(s,\pi_u)$.
Thus,
\[
\mathcal{B}=\Big\{ \sum_{j=1}^{M} c_j  \prod_{u=1}^V \prod_{l=0}^{k_u}  L^{(l)}(s, \pi_u)^{d_{u,l,j}}:M,V \in  \mathbb{N},\, c_{j} \in \mathbb{C}\bs\{0\},\, k_u,d_{u,l,j} \in \mathbb{N}\cup\{0\},\ L(s,\pi_u)\in\mathfrak{S}\Big\}.
\]

In particular, let $\bm{\pi} = (\pi_1, \dots, \pi_V)$ be a fixed vector of $V$ pairwise non-isomorphic representations in $\mathfrak{S}$. Each polynomial $F(s,\bm{\pi}) \in \mathcal{B}$ can be uniquely expressed as a linear combination of pairwise distinct monomial products in the variables $L^{(l)}(s, \pi_u)$, where $l \ge 0$ and $1 \le u \le V$.  More precisely, there exist a unique integer $M > 0$, unique integers $k_u, d_{u,l,j} \ge 0$, and unique nonzero complex coefficients $c_{j}$ such that, up to a permutation of the monomial indices $j$, we have
\begin{equation}\label{def:F}
F(s,\bm{\pi}) = \sum_{j=1}^{M} c_j  \prod_{u=1}^V \prod_{l=0}^{k_u}  L^{(l)}(s, \pi_u)^{d_{u,l,j}} = \sum_{j=1}^{M}  F_j(s,\bm{\pi}).
\end{equation}

We note that the uniqueness of the parameters and exponents in \eqref{def:F} is a consequence of the $\mathbb{C}$-linear independence of the underlying monomial products. Since $\bm{\pi}$ consists of pairwise non-isomorphic representations in $\mathfrak{S}$, Kaczorowski, Molteni, and Perelli \cite{KMP} established that these $L$-functions and their arbitrary derivatives are linearly independent over $\mathbb{C}$. Under Dirichlet convolution, distinct monomial products $F_j(s, \bm{\pi})$ possess arithmetically non-equivalent coefficients when evaluated at prime supports. By \cite[Lemma~1]{KMP}, this non-equivalence guarantees that the distinct monomial products form a linearly independent basis for the polynomial algebra $\mathcal{B}$ over $\mathbb{C}$. Consequently, any polynomial $F(s, \bm{\pi}) \in \mathcal{B}$ admits a unique expression as a linear combination of these monomial basis vectors.

To state our main results, we first define several key quantities that measure in a suitable way the complexity of $F(s,\bm{\pi})$ with respect to the dimensions, orders of differentiation, and arithmetic conductors of its component $L$-functions. Let $m_u$ denote the dimension (or degree) of each $L(s,\pi_u)$, implying $\pi_u \in \mathfrak{F}_{m_u}$, and let $\mathfrak{q}_{\pi_u}$ denote the arithmetic conductor of $L(s,\pi_u)$. (See Section \ref{sec:L-functions} for more details.) For each monomial $F_j(s,\bm{\pi})$ as defined in \eqref{def:F}, we denote
\begin{alignat*}{2}
&\deg_{\dim}(F_j(s,\bm{\pi})) &&:= \sum_{u=1}^V m_u\sum_{l=0}^{k_u} d_{u,l,j},\\
&\deg_{\der}(F_j(s,\bm{\pi})) &&:= \sum_{u=1}^V \sum_{l=0}^{k_u} ld_{u,l,j},\\
 \intertext{and}
& \deg_{\cond}(F_j(s,\bm{\pi})) \hspace{0.02in} &&:= \sum_{u=1}^V \log \mathfrak{q}_{\pi_u} \sum_{l=0}^{k_u} d_{u,l,j}.
\end{alignat*}
Furthermore, we define the following degrees for the whole polynomial $F(s,\bm{\pi})$:
\begin{alignat}{2}
&\deg_{\dim}(F(s,\bm{\pi})) &&:= \max_{1 \le j \le M} \big\{\deg_{\dim}(F_j(s,\bm{\pi}))\big\},\label{def:degm,D(F)}\\
&\deg_{\mathrm{der}}(F(s,\bm{\pi})) 
 &&:=\max_{1 \le j \le M}\big\{ \deg_{\mathrm{der}}(F_j(s,\bm{\pi})) :
\deg_{\dim}(F_j(s,\bm{\pi})) = \deg_{\dim}(F(s,\bm{\pi}))\big\},\\
&\deg_{\mathrm{cond}}(F(s,\bm{\pi})) 
&&:= 
\max_{1 \le j \le M}\Big\{ \deg_{\cond}(F_j(s,\bm{\pi})) :
\begin{array}{c}
\begin{aligned}
    \deg_{\dim}(F_j(s,\bm{\pi})) &= \deg_{\dim}(F(s,\bm{\pi})),\\ 
    \deg_{\mathrm{der}}(F_j(s,\bm{\pi})) &= \deg_{\mathrm{der}}(F(s,\bm{\pi}))
\end{aligned}
\end{array}
\Big\},\label{def:deg cond,D(F)}\\
 &\deg'_{\mathrm{cond}}(F(s,\bm{\pi})) 
 &&:=\max_{1 \le j \le M}\big\{ \deg_{\mathrm{cond}}(F_j(s,\bm{\pi})) :
    \deg_{\dim}(F_j(s,\bm{\pi})) = \deg_{\dim}(F(s,\bm{\pi}))\big\}, \label{def:deg dimD(F)-horizon}\\
  &\deg'_{\mathrm{der}}(F(s,\bm{\pi})) 
   &&:= 
    \max_{1 \le j \le M}\Big\{ \deg_{\der}(F_j(s,\bm{\pi})) :
    \begin{array}{c}
    	\begin{aligned}
    		\deg_{\dim}(F_j(s,\bm{\pi})) &= \deg_{\dim}(F(s,\bm{\pi})),\\ 
    		\deg_{\mathrm{cond}}(F_j(s,\bm{\pi})) &= \deg'_{\mathrm{cond}}(F(s,\bm{\pi}))
\end{aligned}
\end{array}
\Big\}.\label{def:deg cond,D(F)-horizon}
\end{alignat}
We emphasize that the degrees defined in \eqref{def:degm,D(F)}--\eqref{def:deg cond,D(F)-horizon} are well-defined characteristics of $F(s,\bm{\pi})$. Since the pairwise distinct monomial products $F_j(s,\bm{\pi})$ form a $\mathbb{C}$-linearly independent basis for $\mathcal{B}$, the representation in \eqref{def:F} is unique up to a permutation of terms. This uniqueness ensures that the global maximum degrees defined in \eqref{def:degm,D(F)}--\eqref{def:deg cond,D(F)-horizon} are  unambiguous.

For sufficiently large $\Real(s)$, the function $F(s,\bm{\pi})$ is given by a Dirichlet series, 
\begin{equation}\label{eq:Dirichlet series}
    F(s,\bm{\pi}) = \sum_{n=n_F}^{\infty} \frac{\eta_n}{n^{s}},\ \textit{where $n_F$ denotes the first index with a nonzero coefficient.}
\end{equation}
 Let $J$ be the set of \mbox{indices $j$} for which the $j$-th term $F_j(s, \bm{\pi})$ achieves the maximal degrees in \eqref{def:degm,D(F)}--\eqref{def:deg cond,D(F)}. In other words, 
\begin{equation}\label{def:J}
J := \left\{ 1\le j \le M :
    \begin{array}{c}
        \deg_{\dim}(F_j(s,\bm{\pi}))  = \deg_{\dim}(F(s,\bm{\pi})), \\
        \deg_{\der}(F_j(s,\bm{\pi}))
         = \deg_{\mathrm{der}}(F(s,\bm{\pi})),\\
        \deg_{\cond}(F_j(s,\bm{\pi}))  = \deg_{\mathrm{cond}}(F(s,\bm{\pi})).
    \end{array}
\right\}.
\end{equation}
 Let $J'$ be the set of \mbox{indices $j$} for which the $j$-th term $F_j(s, \bm{\pi})$ achieves the maximal degrees in \eqref{def:degm,D(F)}, \eqref{def:deg dimD(F)-horizon}, and \eqref{def:deg cond,D(F)-horizon}. In other words, 
\begin{equation}\label{def:J'}
	J' := \left\{ 1\le j \le M :
	\begin{array}{c}
		\deg_{\dim}(F_j(s,\bm{\pi}))  = \deg_{\dim}(F(s,\bm{\pi})), \\
		\deg_{\cond}(F_j(s,\bm{\pi}))
		= \deg'_{\mathrm{cond}}(F(s,\bm{\pi})),\\
		\deg_{\der}(F_j(s,\bm{\pi}))  = \deg'_{\mathrm{der}}(F(s,\bm{\pi})).
	\end{array}
	\right\}.
\end{equation}

In what follows, we will work with such $F(s,\bm{\pi})$ for which  
\begin{align}\label{assump:assumption 1}
    \sum_{j \in J} c_j \prod_{u=1}^V  m_u^{\sum_{l=0}^{k_u}l d_{u,l, j}} \neq 0 \quad \text{and} \quad \sum_{j \in J'} c_j \prod_{u=1}^V  m_u^{\sum_{l=0}^{k_u}l d_{u,l, j}} \neq 0.
\end{align}
\begin{rem}
        Condition \eqref{assump:assumption 1} precludes destructive cancellation among the dominant monomials indexed by $J$ and $J'$. This ensures that the main term of the asymptotic functional equations in Lemma \ref{lem:FE}, which are needed in the proofs of Proposition \ref{prop:zfr1} and Theorem \ref{thm:number of zeros}, do not vanish. If this condition fails, then the degrees defined in \eqref{def:degm,D(F)}--\eqref{def:deg cond,D(F)} would no longer describe the actual leading asymptotic behavior of $F(s,\bm{\pi})$.
\end{rem}


\subsection{Main results}
Analogous to the Riemann zeta function, the \textit{trivial zeros} of $F(s,\bm{\pi})$ are those zeros located in predictable neighborhoods, which arise from the Gamma factors of the component $L$-functions. The remaining zeros of $F(s,\bm{\pi})$ are the \textit{nontrivial zeros}, which lie in a vertical strip with bounded real parts. Precise neighborhoods of these trivial zeros and the existence of a vertical strip confining the nontrivial zeros are shown in Proposition \ref{prop:zfr1} and Lemma \ref{thm:zfr2}. 

We define the counting function $N_{F(s,\bm{\pi})}(0,T)$ as the number of nontrivial zeros of $F(s,\bm{\pi})$ with positive imaginary part up to $T$, that is,
\[
N_{F(s,\bm{\pi})}(0,T)= \#\{\text{nontrivial zeros of } F(s,\bm{\pi}) \text{ with } 0 < \Imag(s) < T \}.
\]
Our first theorem establishes an asymptotic formula for $N_{F(s,\bm{\pi})}(0,T)$.
\begin{thm}\label{thm:number of zeros}
Let $ F(s,\bm{\pi}) \in \mathcal{B}$ be defined as in \eqref{def:F} satisfying \eqref{assump:assumption 1}. Then there exist constants $\alpha_1 := \alpha_1(F(s,\bm{\pi}))$ and $\alpha_2 := \alpha_2(F(s,\bm{\pi}))$ such that for $T \ge 2$,
    \begin{align}
    N_{F(s,\bm{\pi})}(0,T) = \alpha_1 T \log T  + \alpha_2 T + O_F(\log T),\label{eq:eq in theorem 1.2}
    \end{align}
     where the implied constant on the right side of \eqref{eq:eq in theorem 1.2} depends only on $F$.
     Furthermore, the constants $\alpha_1$ and $\alpha_2$ are given explicitly as follows:
        \begin{align*}
            \alpha_1 &= \frac{1}{2\pi}\deg_{\dim}(F(s,\bm{\pi})), \\
             \intertext{and}
            \alpha_2 &= \frac{1}{2\pi}\Big(\deg_{\mathrm{cond}}(F(s,\bm{\pi}))-\deg_{\dim}(F(s,\bm{\pi}))\log(2\pi e)-\log n_F\Big).
        \end{align*}
\end{thm}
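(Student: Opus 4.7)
The plan is to apply the argument principle on a rectangle enclosing all nontrivial zeros of $F$ with $0<\Imag(s)<T$, and to compute the change in $\arg F(s,\bm{\pi})$ along each side. By Lemma~\ref{thm:zfr2}, I may fix real numbers $a<0$ and $b>1$ so that every nontrivial zero lies in the strip $a<\Real(s)<b$; after a harmless perturbation of $T$ that avoids zeros on the boundary, let $\mathcal{R}_T$ be the rectangle with vertices $a,\ b,\ b+iT,\ a+iT$. Then the argument principle gives
\[
N_{F(s,\bm{\pi})}(0,T) \;=\; \frac{1}{2\pi}\,\Delta_{\partial \mathcal{R}_T}\arg F(s,\bm{\pi}) + O(1),
\]
so it remains to compute or bound the argument change on the four sides.

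On the right edge $\Real(s)=b$, the Dirichlet series \eqref{eq:Dirichlet series} converges absolutely and gives $F(s,\bm{\pi})=\eta_{n_F} n_F^{-s}(1+O(2^{-\delta}))$ for some $\delta>0$, so the change in argument is $-T\log n_F+O(1)$. The bottom edge contributes $O(1)$ by continuity and boundedness. For the top edge $\Imag(s)=T$ I would use a Backlund-type bound: estimate the number of sign changes of $\Real F(\sigma+iT)$ in $\sigma\in[a,b]$ by counting the zeros, via Jensen's inequality on a disc of fixed radius about $b+iT$ where the Dirichlet series representation is in force, producing an $O(\log T)$ contribution.

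The main work is on the left edge $\Real(s)=a$. Here I would substitute the functional equation $L(s,\pi_u)=\varepsilon(\pi_u)X(s,\pi_u)L(1-s,\widetilde{\pi_u})$ into each $L^{(l)}(s,\pi_u)$ and expand via Leibniz. Stirling's formula yields $X^{(k)}(s,\pi_u)=\bigl(-m_u\log|t|-\log\mathfrak{q}_{\pi_u}\bigr)^k X(s,\pi_u)\bigl(1+O(1/\log|t|)\bigr)$, and $L(1-s,\widetilde{\pi_u})$ together with its $s$-derivatives is $1+O(2^{-(1-a)})$, since $\Real(1-s)$ is large. Writing $D_{u,j}:=\sum_l d_{u,l,j}$, this yields, for each monomial,
\[
F_j(s,\bm{\pi}) \;=\; c_j\, P_j(\log|t|)\prod_{u=1}^V \bigl(\varepsilon(\pi_u)X(s,\pi_u)\bigr)^{D_{u,j}}\bigl(1+o(1)\bigr),
\]
where $P_j$ is a polynomial of degree $\deg_{\mathrm{der}}(F_j)$. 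The modulus scales as $|t|^{(1/2-\sigma)\deg_{\mathrm{rk}}(F_j)}(\log|t|)^{\deg_{\mathrm{der}}(F_j)}$, while Stirling also gives $\arg X(\sigma+it,\pi_u)=-t\log\mathfrak{q}_{\pi_u}-m_u t\log(|t|/(2\pi e))+O(1)$, so the dominant size and phase of each $F_j$ depend on $j$ only through $\deg_{\mathrm{rk}}(F_j)$, $\deg_{\mathrm{der}}(F_j)$, and $\deg_{\mathrm{cond}}(F_j)$. Consequently the monomials indexed by $J$ share a single leading asymptotic, and the hypothesis \eqref{assump:assumption 1} prevents cancellation of the top-order term. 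Integrating the resulting $\arg F$ along $\Real(s)=a$ from $t=T$ down to $t=0$ gives the left-edge contribution
\[
\deg_{\mathrm{rk}}(F)\,T\log(T/(2\pi e))+\deg_{\mathrm{cond}}(F)\,T+O(\log T).
\]
Summing the four edge contributions and dividing by $2\pi$ recovers the claimed values of $\alpha_1$ and $\alpha_2$.

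The principal obstacle is the left-edge analysis: because distinct $j\in J$ may involve different sets of representations $\pi_u$ with different exponents $D_{u,j}$, one has to verify that after extracting the universal factors of $|t|^{(1/2-\sigma)\deg_{\mathrm{rk}}(F)}$, $(\log|t|)^{\deg_{\mathrm{der}}(F)}$, and the universal phase $\exp\{-it(\deg_{\mathrm{rk}}(F)\log(|t|/(2\pi e))+\deg_{\mathrm{cond}}(F))\}$, the residual contribution really reduces to $\sum_{j\in J}c_j$ times a single leading phase factor with a genuinely lower-order error. A secondary difficulty is adapting Backlund's $O(\log T)$ bound on the top edge to a polynomial expression in derivatives of $L$-functions rather than a single $L$-function, which requires uniform control of $F'/F$ on horizontal segments.
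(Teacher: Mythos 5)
Your plan follows essentially the same contour-argument route as the paper: the argument principle on a rectangle with fixed abscissae, with the right edge yielding $-T\log n_F$, the bottom edge $O(1)$, the top edge $O_F(\log T)$, and the left edge supplying the $T\log T$ main term via the asymptotic functional equation. The two difficulties you flag at the end are genuine and are exactly where the paper's auxiliary lemmas do their work, so let me say how each is resolved. For the left edge, the coherence of the terms $j\in J$ is obtained by factoring out a single fixed index $j_0\in J$: by the definition of $J$, all $j\in J$ share the same $\deg_{\mathrm{rk}}$, $\deg_{\mathrm{der}}$, and $\deg_{\mathrm{cond}}$, so the dominant gamma/conductor phase is literally identical for all of them, and after dividing by the $j_0$ term the residual is a finite sum of ratios of products $\prod_u L(s,\widetilde\pi_u)^{e_{u,j}}$ evaluated at $\Real(s)>1$; absolute convergence makes each such $L$-factor's argument bounded (and the stray factors $s^{-i\sum_r b_{u,r}}$ contribute only $O_F(\log T)$), while hypothesis \eqref{assump:assumption 1} guarantees that the extracted leading coefficient $\sum_{j\in J}c_j$ is nonzero, so nothing cancels. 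For the top edge, your Jensen/Backlund idea is the right one, but it has an unstated prerequisite: you need an $O(\log T)$ upper bound for $\log|F|$ on a disc of fixed radius, which requires knowing that each $L^{(l)}(s,\pi_u)$ grows at most polynomially in the analytic conductor (the paper proves this in Lemma~\ref{lem:hadamard} via a Cauchy-integral bound on derivatives, and at the same time establishes that $F$ has order one so the Hadamard factorization exists). The paper then packages this into Lemma~\ref{lem:local-zeros} (local zero count $\ll_F\log T$) and Lemma~\ref{lem:log-derivative} ($F'/F$ approximated by nearby zeros), and estimates the top-edge integral by deforming around each nearby zero; your proposed direct count of sign changes of $\Real F(\sigma+iT)$ is an equivalent standard variant. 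So the proposal is correct in outline, and the missing content is precisely these supporting lemmas.
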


Returning to the distribution of zeros of $\zeta(s)$, the classical work of Bohr and Landau \cite{bohr} shows that almost all nontrivial zeros of $\zeta(s)$ lie close to $\Real(s)=1/2$. 
In this paper, we prove that this phenomenon also holds for our broad class $\mathcal{B}$ of analytic functions. To state our result, we set $F:= F(s,\bm{\pi})\in \mathcal{B}$ and define the following zero-counting functions:
\begin{align*}
    N_{F}^+(c; T_1, T_2) &:= \#\{ \text{nontrivial zeros of $F(s,\bm{\pi})$ with } T_1 < \Imag(s) < T_2 \text{ and } \Real(s) > c \},\\
    \intertext{and}
    N_{F}^-(c; T_1, T_2) &:= \#\{ \text{nontrivial zeros of $F(s,\bm{\pi})$ with } T_1 < \Imag(s) < T_2 \text{ and } \Real(s) < c \}.
\end{align*}

We now state our second main theorem, which shows that, under a strong second-moment assumption, almost all the nontrivial zeros of $F(s,\bm{\pi})$ are near the critical line.
\begin{thm}\label{thm:zeros-near-1/2}
    Let $F(s,\bm{\pi}) \in \mathcal{B}$ be defined as in \eqref{def:F} satisfying \eqref{assump:assumption 1}. Suppose each $L(s, \pi_u)$ satisfies 
     \begin{align}\int_T^{2T}\Big|L\Big(\frac{1}{2}+it,\pi_u \Big)\Big|^{2}dt \ll_{\eta,\pi_u} T(\log T)^{\eta}\label{eq:sharp second moment bound}
    \end{align}
for some $\eta > 0$. Then for any large $T$ and any $\delta > 0$, we have
\begin{equation}
N_{F}^{+}\Big(\frac{1}{2} + \delta; T, 2T\Big) + N_{F}^{-}\Big(\frac{1}{2} - \delta; T, 2T\Big) 
= O_F\Big(\frac{T \log \log T}{\delta}\Big),\label{eq:eq in theorem 1.3}
\end{equation}
where the implied constant on the right side of \eqref{eq:eq in theorem 1.3} depends only on $F$.
\end{thm}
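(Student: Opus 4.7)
My plan is to adapt the classical Bohr--Landau strategy: for $N_F^+(\tfrac12+\delta;T,2T)$, apply Littlewood's lemma to the rectangle $[\tfrac12+\tfrac\delta2,2]\times[T,2T]$ (with the right edge in the region of absolute convergence), which reduces the bound to controlling $\int_T^{2T}\log|F(\tfrac12+\tfrac\delta2+it,\bm\pi)|\,dt$. Using $\log|F|\leq O_F(1)+\sum_{j,u,l}d_{u,l,j}\log^+|L^{(l)}(s,\pi_u)|$, the elementary inequality $\log^+x\leq\tfrac12\log(1+x^2)$, Jensen's inequality, and Cauchy's integral formula (circle of radius $r=\delta/8$) to reduce $\int|L^{(l)}|^2$ to $\int|L(\sigma'+it,\pi_u)|^2\,dt$ on nearby lines $\sigma'\in[\tfrac12+\tfrac\delta8,\tfrac12+\tfrac{3\delta}8]$, I am reduced to bounding this mean square. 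Log-convexity of $\sigma\mapsto\int_T^{2T}|L(\sigma+it,\pi_u)|^2\,dt$ (Hadamard three-lines), combined with the hypothesis at $\sigma=\tfrac12$ and absolute convergence at $\sigma=2$, yields the uniform estimate $\ll_{\pi_u}T(\log T)^\eta$ throughout $\sigma\in[\tfrac12,2]$, whence $\int\log|F(\tfrac12+\tfrac\delta2+it,\bm\pi)|\,dt\ll_F T\log\log T$ and $N_F^+\ll_F T\log\log T/\delta$.

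For $N_F^-(\tfrac12-\delta;T,2T)$, I invoke the functional equations $L(s,\pi_u)=\chi_u(s)L(1-s,\tilde\pi_u)$ differentiated via Leibniz,
\[
L^{(l)}(s,\pi_u)=\chi_u(s)\sum_{k=0}^l\binom{l}{k}(-1)^{l-k}\frac{\chi_u^{(k)}(s)}{\chi_u(s)}L^{(l-k)}(1-s,\tilde\pi_u),
\]
where Stirling bounds $\chi_u^{(k)}/\chi_u$ by a polynomial in $\log|t|$ of degree $k$. Substituting into each $F_j$ and factoring out $\prod_u\chi_u(s)^{D_{u,j}}$ with $D_{u,j}=\sum_l d_{u,l,j}$, $F(s,\bm\pi)$ becomes a finite sum $\sum_i\Phi_i(s)\,\widetilde G_i(1-s,\tilde{\bm\pi})$ with each $\widetilde G_i\in\mathcal B$ built from the contragredient representations, which inherit the second-moment hypothesis since $|L(\tfrac12+it,\tilde\pi_u)|=|L(\tfrac12-it,\pi_u)|$. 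On $\Real(s)=\tfrac12-\tfrac\delta2$, Stirling gives $\log|\chi_u(s)|=m_u(\delta/2)\log|t|+O(1)$, and applying the right-side argument to each $\widetilde G_i$ on the line $\Real(1-s)=\tfrac12+\tfrac\delta2$ yields $\int_T^{2T}\log|F(\tfrac12-\tfrac\delta2+it,\bm\pi)|\,dt=\deg_{\mathrm{rk}}(F)(\delta/2)T\log T+O_F(T\log\log T)$.

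Finally, I apply Littlewood's lemma to the rectangle $[a,\tfrac12-\tfrac\delta2]\times[T,2T]$ with $a$ chosen to the left of the nontrivial-zero strip (guaranteed by Lemma~\ref{thm:zfr2}). Combined with the total count $N_{\mathrm{box}}=\alpha_1T\log T+O_F(T)$ from Theorem~\ref{thm:number of zeros} and the parallel asymptotic $\int_T^{2T}\log|F(a+it,\bm\pi)|\,dt=\deg_{\mathrm{rk}}(F)(1/2-a)T\log T+O_F(T)$ (where the dual $\widetilde G_i(1-s)$ are now in absolute convergence and hence bounded between positive constants), the inequality
\[
\tfrac\delta2\, N_F^-\leq\bigl(\tfrac12-\tfrac\delta2-a\bigr)N_{\mathrm{box}}-\tfrac{1}{2\pi}\int_T^{2T}\!\log|F(a+it)|\,dt+\tfrac{1}{2\pi}\int_T^{2T}\!\log|F(\tfrac12-\tfrac\delta2+it)|\,dt+O(\log T)
\]
exhibits exact cancellation of the $T\log T$-coefficients (since $\alpha_1=\deg_{\mathrm{rk}}(F)/(2\pi)$ and $(\tfrac12-\tfrac\delta2-a)-(\tfrac12-a)+\tfrac\delta2=0$), leaving $N_F^-\ll_F T\log\log T/\delta$. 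The main technical obstacle is precisely this cancellation: since $F$ does not factor as $\Phi(s)\,\widetilde F(1-s)$ for a single $\widetilde F\in\mathcal B$, one must track the Leibniz expansion with enough care that the leading $T\log T$-coefficients match on both sides of Littlewood's identity, so that the residual error is $O(T\log\log T)$ rather than the easier $O(T\log T)$.
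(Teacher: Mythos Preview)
Your plan follows the same Bohr--Landau/Littlewood strategy as the paper, and is essentially correct, but the execution differs in two places worth comparing.

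For $N_F^+$, the paper applies Littlewood on the critical line $\Real(s)=\tfrac12$ rather than at $\tfrac12+\tfrac\delta2$, and handles derivatives not via Cauchy's formula but by writing $L^{(l)}/L=\prod_{k=0}^{l-1}L^{(k+1)}/L^{(k)}$ and bounding each factor through the partial-fraction expansion $\tfrac{L^{(k+1)}}{L^{(k)}}(s)=\sum_{|\gamma^{(k)}-t|<2}(s-\rho^{(k)})^{-1}+O(\log t)$ together with the Levinson--Montgomery estimate $\int_{n-1/2}^{n+1/2}\big|\sum_\rho(s-\rho)^{-1}\big|^{1/2}dt\ll\sqrt{\log n}$. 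H\"older then gives $\int_T^{2T}|L^{(l)}(\tfrac12+it)|^{2/(4l+1)}dt\ll T(\log T)^{O(1)}$ directly on the critical line, hence $\sum_{\beta_F>1/2}(\beta_F-\tfrac12)\ll T\log\log T$ with no $\delta$-dependence. Your Cauchy circle of radius $\delta/8$ introduces a factor $\delta^{-2l}$ inside the logarithm, producing an extra $T\log(1/\delta)$; this is harmless once $\delta\ge 1/\log T$ (and the bound is vacuous otherwise by Theorem~\ref{thm:number of zeros}), so your route still proves the statement, but the paper's is uniform in~$\delta$ and avoids the convexity interpolation.

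For $N_F^-$, the paper's argument is shorter: it applies Littlewood only once, at $b$ far to the left, obtaining $2\pi\sum_{T<\gamma_F<2T}(\beta_F-b)=\deg_{\mathrm{rk}}(F)(\tfrac12-b)T\log T+O(T\log\log T)$ directly from Lemma~\ref{lem:FE}, and then decomposes this sum over the ranges $\beta_F>\tfrac12+\delta$, $|\beta_F-\tfrac12|\le\delta$, $\beta_F<\tfrac12-\delta$, bounding the first piece by the $N_F^+$ result and the counts by Theorem~\ref{thm:number of zeros}; the $T\log T$ terms cancel and $N_F^-$ drops out. This sidesteps your second Littlewood rectangle and the integral on $\Real(s)=\tfrac12-\tfrac\delta2$ entirely. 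Your Leibniz expansion $F(s)=\sum_i\Phi_i(s)\widetilde G_i(1-s)$ is effectively reproving the asymptotic functional equation of Lemma~\ref{lem:FE}, and the obstacle you flag at the end---possible cancellation among the dominant $\Phi_i$---is precisely what assumption~\eqref{assump:assumption 1} together with the detailed bookkeeping of Lemma~\ref{lem:FE} is designed to exclude. Your remark that each $\widetilde G_i(1-a-it)$ is ``bounded between positive constants'' is not by itself enough to force $\log|F(a+it)|\ge(\tfrac12-a)\deg_{\mathrm{rk}}(F)\log|t|-O(\log\log t)$: the dominant $\Phi_i$ (those arising from $j\in J$) oscillate relative to one another, so you would still need to invoke Lemma~\ref{lem:FE} (or redo its analysis) to secure the required lower bound for $\int\log|F(a+it)|\,dt$.
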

\begin{rem}
  Theorem~\ref{thm:zeros-near-1/2} yields a nontrivial result when $\delta \gg_F {\log\log T}/{\log T}$. In this range, our theorem implies that almost all nontrivial zeros of $F(s,\bm{\pi})$ cluster near the critical line  $\Real(s)=1/2$.
\end{rem} 

Theorem \ref{thm:zeros-near-1/2} requires only the second-moment bound in \eqref{eq:sharp second moment bound}.
In many natural families of automorphic $L$-functions, this bound is known to hold under certain standard analytic hypotheses.
To make this connection explicit, we now introduce relevant sets of $L$-functions and formulate the auxiliary assumptions under which \eqref{eq:sharp second moment bound} follows automatically.
These will be used to deduce a more concrete corollary of Theorem \ref{thm:zeros-near-1/2}.

We introduce the following sets and hypotheses:
\begin{itemize}
    \item The set $\mathcal{R}$ consists of the $L$-functions corresponding to all representations in $\mathfrak{F}_1$ and a specific subset of  $\mathfrak{F}_2$. 
    This class consists of the $L$-functions of 
    holomorphic cusp forms of  $\Gamma_0(N)$ of even weight~$k$, and even Maa{\ss} cusp forms of $\Gamma_0(N)$, both for all $N \ge 1$.
    
    \item  The set $\mathcal{T}$ consists of all $L$-functions corresponding to the  representations in  $\cup_{i=1}^{4}\mathfrak{F}_i$.
\end{itemize}

\begin{hypoh}[Rudnick and Sarnak \cite{rudnick}]
        Let $m\in\N$ and $\pi\in\mathfrak{F}_m$, and let $\lambda_{\pi}(n)$ denote the 
        $n$-th Dirichlet coefficient of $L(s,\pi)$.
        For any fixed $k\ge 2$, 
    \begin{align*}
    \sum_{p\text{ prime}} \frac{|\lambda_\pi(p^k)|^2}{p^k} <\infty.
    \end{align*}
    \end{hypoh}
    
    Hypothesis $\mathrm{H}_{\pi}$ has been definitively established by Jiang \cite{Jiang} for all $m \geq 1$, although Jiang's proof is currently under review. Prior to Jiang’s work, Hypothesis $\mathrm{H}_{\pi}$ was known only in low dimensions: the case $m = 1$ is trivial; the case $m = 2$ is due to Kim and Sarnak~\cite{kim-Sarnak}; $m = 3$ is due to Rudnick and Sarnak~\cite{rudnick}; and $m = 4$ is due to Kim~\cite{kim}. Nevertheless, throughout this paper, we continue to state results under the explicit assumption of Hypothesis $\mathrm{H}_{\pi}$. This is done for structural clarity, although the assumption may now be omitted.

\begin{hypoA}
Let $\mathcal{S} \subset \mathfrak{S}$ be a finite set.
We say that $\mathcal{S}$ satisfies Hypothesis $\bm{\mathrm{A}}$ if every element $L(s,\pi) \in \mathcal{S}$ satisfies the following conditions:
\begin{enumerate}
    \item If $L(s,\pi) \in \mathcal{T} \setminus \mathcal{R}$, then the Riemann Hypothesis (RH) holds for $L(s,\pi)$;
    \item If $L(s,\pi) \in \mathcal{S} \setminus \mathcal{T}$, then both RH and Hypothesis $\bm{\mathrm{H}}_{\pi}$ hold for $L(s,\pi)$.
\end{enumerate}
\end{hypoA}
 
\begin{cor}\label{cor: corollary of thm 1.2}
Let $F(s,\bm{\pi}) \in \mathcal{B}$ be defined as in \eqref{def:F} satisfying \eqref{assump:assumption 1}. Suppose the set of $L$-functions appearing in the representation \eqref{def:F} of $F(s,\bm{\pi})$ satisfies Hypothesis $\bm{\mathrm{A}}$.
Then for any large $T$ and any $\delta > 0$, we have
\begin{equation}
N_{F}^{+}\Big(\frac{1}{2} + \delta; T, 2T\Big) + N_{F}^{-}\Big(\frac{1}{2} - \delta; T, 2T\Big) 
= O_F\Big(\frac{T \log \log T}{\delta}\Big),\label{eq:eq in theorem 1.3 2}
\end{equation}
where the implied constant on the right side of \eqref{eq:eq in theorem 1.3 2} depends only on $F$.
\end{cor}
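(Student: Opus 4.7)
\textbf{The plan} is to deduce Corollary~\ref{cor: corollary of thm 1.2} directly from Theorem~\ref{thm:zeros-near-1/2}: I will verify that Hypothesis~$\bm{\mathrm{A}}$ forces the second-moment bound \eqref{eq:sharp second moment bound} to hold for each of the finitely many $L$-functions $L(s,\pi_u)$ appearing in $F(s,\bm{\pi})$, and then apply Theorem~\ref{thm:zeros-near-1/2} verbatim. Writing $\mathcal{S}$ for the finite set of those $L(s,\pi_u)$, it therefore suffices to check \eqref{eq:sharp second moment bound} for a single $L(s,\pi_u)\in\mathcal{S}$, split into three cases according to which clause of Hypothesis~$\bm{\mathrm{A}}$ applies.

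First, if $L(s,\pi_u)\in\mathcal{R}$, then Hypothesis~$\bm{\mathrm{A}}$ imposes no condition and \eqref{eq:sharp second moment bound} with $\eta=1$ is classical and unconditional: it is the Hardy--Littlewood/Ingham mean value for $\pi_u\in\mathfrak{F}_1$, and follows from Good's theorem (together with its Maa\ss{} analogues) for the listed degree-two $L$-functions on $\Gamma_0(N)$. Second, if $L(s,\pi_u)\in\mathcal{T}\setminus\mathcal{R}$, then Hypothesis~$\bm{\mathrm{A}}$ supplies RH for $L(s,\pi_u)$, and \eqref{eq:sharp second moment bound} follows from the conditional moment bound of Chandee (adapting Soundararajan's method): under RH for an $L$-function of degree $m$ in the Selberg class, for every $k>0$ and $\varepsilon>0$,
\[
    \int_T^{2T}\Big|L\Big(\tfrac{1}{2}+it,\pi\Big)\Big|^{2k}\,dt \;\ll_{k,\pi}\; T(\log T)^{k^{2}m+\varepsilon}.
\]
The case $k=1$ is exactly \eqref{eq:sharp second moment bound} with $\eta=m_u+\varepsilon$. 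Third, if $L(s,\pi_u)\in\mathcal{S}\setminus\mathcal{T}$, then Hypothesis~$\bm{\mathrm{A}}$ supplies both RH and Hypothesis~$\bm{\mathrm{H}}_{\pi}$; the latter provides the Rankin--Selberg control at prime powers needed to run the Soundararajan--Chandee argument in arbitrary rank, so the same moment bound yields \eqref{eq:sharp second moment bound}.

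\textbf{I expect the main obstacle} to lie in the degree-$\ge 3$ cases: the approximate functional equation for $L(1/2+it,\pi_u)$ has length $\asymp T^{m_u/2}$, so a direct Montgomery--Vaughan mean value has error term larger than the main term. The Soundararajan--Chandee method circumvents this by working with $\log|L|$ rather than $|L|$, using RH to shorten the relevant Dirichlet polynomial and Hypothesis~$\bm{\mathrm{H}}_{\pi}$ to unlock positivity of the Rankin--Selberg coefficients. Once \eqref{eq:sharp second moment bound} is secured for every $\pi_u$, Corollary~\ref{cor: corollary of thm 1.2} reduces to Theorem~\ref{thm:zeros-near-1/2}, with the implied constant absorbed into the $O_F$-notation.
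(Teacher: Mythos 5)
Your proposal takes essentially the same route as the paper's proof: verify that Hypothesis $\bm{\mathrm{A}}$ yields the second-moment bound \eqref{eq:sharp second moment bound} for each component $L$-function (unconditionally for those in $\mathcal{R}$, and via Soundararajan--Chandee-type conditional moment bounds otherwise, with Hypothesis $\bm{\mathrm{H}}_{\pi}$ supplying the needed Rankin--Selberg input in high rank), then apply Theorem~\ref{thm:zeros-near-1/2}. One small inaccuracy worth flagging: for a primitive degree-$m_u$ $L$-function the correct conditional exponent is $k^2+\varepsilon$ (giving $\eta=1+\varepsilon$ at $k=1$, as in the paper's citations of Milinovich--Turnage-Butterbaugh and Tang--Xiao), not $k^2 m_u + \varepsilon$; this is harmless here since Theorem~\ref{thm:zeros-near-1/2} only requires \emph{some} $\eta>0$.
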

\begin{rem}
    Throughout the paper, the notation $O_F(\cdot)$ means that the implied constant may depend on the fixed function $F(s,\bm{\pi})$. More explicitly, it may depend on the number of monomials $M$, the number of $L$-functions $V$, the differentiation orders for each $L$-function $k_u$, the coefficients $c_j$, the exponents $d_{u,l,j}$, and the underlying representations $\pi_u$ (including their dimensions, analytic conductors, spectral parameters, and bounds toward the Generalized Ramanujan--Petersson Conjecture (GRC)). All of these notations are defined in \eqref{def:F}.
\end{rem}

\subsection{Connections to earlier work and related problems}
   Our main results above recover and extend several classical results, and also establish connections to a range of existing problems. To illustrate the breadth of our framework, we consider several key instances. We first demonstrate how our theorems recover known results for specific choices of the function $F(s,\bm{\pi})$, and then show how they provide insights into related problems such as $a$-points and the zeros of truncated series.
\begin{enumerate}
[leftmargin=0.3in]
    \item \textbf{Derivatives of the Riemann zeta function $\zeta^{(k)}(s)$:} Taking $F(s, \bm{\pi}) =\zeta^{(k)}(s)$, the $k$-th derivative of the Riemann zeta function, we obtain from Theorem \ref{thm:number of zeros} 
    \[
    N_F(0,T) = \frac{T}{2\pi}\log\frac{T}{2\pi e} - \frac{T}{2\pi}\log 2 + O_F(\log T),
    \]
    which recovers the result of Berndt~\cite[Theorem]{Berndt}. Moreover, Theorem \ref{thm:zeros-near-1/2} implies that almost all of these zeros lie near the critical line, thereby recovering Theorem 2 in the work of Levinson and Montgomery~\cite{levinsonmontgomery}.
    \item \textbf{Polynomials in the derivatives of $\zeta(s)$:} Another example is when $F(s, \bm{\pi})$ is a polynomial in the derivatives of $\zeta(s)$, namely,
\[
    F(s, \boldsymbol{\pi}) = \sum_{j=1}^{M} c_j \prod_{l=0}^{k} \zeta^{(l)}(s)^{d_{l,j}},
\]
for some integer $M \ge 1$, integers $d_{l,j} \ge 0$, and nonzero complex constants $c_j$.
For such $F(s, \bm{\pi})$, applying Theorem \ref{thm:number of zeros} yields
\[
    N_F(0,T) = \deg_{\dim}(F)\frac{T}{2\pi}\log\frac{T}{2\pi e} - \frac{T}{2\pi}\log n_F + O_F(\log T).
\]
Additionally, Theorem \ref{thm:zeros-near-1/2} shows that almost all of these nontrivial zeros cluster near the critical line. These results recover work of Onozuka \cite[Theorems~1.3 and 1.5]{Onozuka}.

    
 
\item \textbf{$a$-points problems:} The study of the \textit{$a$-points} of $\zeta(s)$ concerns the distribution of the roots of $\zeta(s)-a$ for a fixed $a \in \mathbb{C}$. Originally investigated by Bohr, Landau, and Littlewood~\cite{bohr-Landau-Littlewood}, they provided the asymptotic count of roots of $\zeta(s)-a$ and showed that under RH these roots cluster near the critical line. Levinson \cite{levinson} later proved this clustering phenomenon unconditionally. On the other hand, Selberg \cite{selberg} conjectured that there are only finitely many roots of $\zeta(s)-a$ on the critical line. More recently, Lester \cite{lester} showed that for $a \neq 0$, asymptotically at most half of these points lie on the critical line. Beyond the $a$-points of $\zeta(s)$, Onozuka~\cite{onozuka-a-point} studied related $a$-point problems for the derivatives of the zeta function, $\zeta^{(k)}(s)$.

Our work can be applied to the study of $a$-points for $F(s,\bm{\pi})$, a polynomial in the derivatives of various automorphic $L$-functions. 
The connection is immediate: the $a$-points of $F(s,\bm{\pi})$ are precisely the zeros of the translated function $F(s,\bm{\pi})-a$. Applying our main theorems to this function yields two key consequences. First, Theorem \ref{thm:number of zeros} provides an asymptotic formula for the number of $a$-points, thereby recovering, as special cases, the classical results of Bohr, Landau, and Littlewood \cite{bohr-Landau-Littlewood} and Onozuka \cite[Theorem~1.1]{onozuka-a-point}. Second, Theorem \ref{thm:zeros-near-1/2} shows that, under its assumptions, almost all of these $a$-points lie close to the critical line $\Real(s) = 1/2$.  

As another illustration, our framework applies to the study of the $a\text{-points}$ of $({\zeta'}/{\zeta})(s)$, an important function in analytic number theory. Indeed, such $a$-points correspond precisely to the zeros of $\zeta'(s) - a \zeta(s)$, which is a linear combination of $\zeta(s)$ and $\zeta'(s)$. If all zeros of $\zeta(s)$ are assumed to be simple, we obtain a bijection between the $a$-points of $({\zeta'}/{\zeta})(s)$ and the zeros of $\zeta'(s) - a \zeta(s)$. Consequently, our theorems can be applied to study the distribution of these $a$-points  (given $a\neq 1$) of the logarithmic derivative.

 \item  \textbf{Zeros of truncated Taylor expansions:} Another interesting aspect of the present work is the following. Consider the Taylor expansion of $L(s+c,\pi)$ around the point $s$:
\[
L(s+c,\pi) = L(s,\pi) + L^{(1)}(s,\pi)c + \frac{L^{(2)}(s,\pi)}{2!}c^2 + \frac{L^{(3)}(s,\pi)}{3!}c^3 + \cdots.
\]
Under GRH, the nontrivial zeros of the function $L(s+c,\pi)$ lie on the shifted critical line $\Real(s) = 1/2 - \Real(c)$. However, any finite truncation of the series above is a linear combination of derivatives of $L(s, \pi)$. As such, Theorem \ref{thm:zeros-near-1/2} applies to these partial sums. This leads to a striking contrast: while the zeros of the full series are on a shifted line, our theorem implies that almost all zeros of its truncated polynomial approximations lie arbitrarily close to the original critical line, $\Real(s) = 1/2$. This reveals a stark dichotomy between the behavior of the above infinite series and its finite approximations. 

\end{enumerate}

\subsection*{Declarations}
A.D., N.W., and A.Z.  have no competing interest and did not receive any specific funding for this work. 

\subsection*{Author Contributions}
A.D., N.W., and A.Z. conceived the idea, developed the theory, and performed calculations. All authors contributed to the final manuscript and approved the version.

\subsection*{Outline of the paper}
The paper is organized as follows.
Section~\ref{sec:L-functions} reviews the necessary background on properties of automorphic $L$-functions.
Section~\ref{sec:asymp-F} establishes an asymptotic formula for $F(1-s,\widetilde{\bm{\pi}})$, serving as an analogue of the functional equation for complete $L$-functions.
Section~\ref{sec:ZFR} applies this formula to locate the zeros of $F(s,\bm{\pi})$, proving large zero-free regions and counting the number of trivial zeros lying in predictable disks.
Section~\ref{sec:asymp-zero} combines the results from previous sections to prove Theorem~\ref{thm:number of zeros}, an asymptotic formula for the number of nontrivial zeros of $F(s,\bm{\pi})$ up to height~$T$. 
Section~\ref{sec:zeros near critical line} presents the proof of Theorem~\ref{thm:zeros-near-1/2}, which shows that, under certain conditions, almost all nontrivial zeros of $F(s,\bm{\pi})$ lie close to the critical line. 

\section{Properties of automorphic \texorpdfstring{$L$}{L}-functions}
\label{sec:L-functions}


    For $u=1,2,\cdots,V$, let $m_u \ge 1$ be an integer, $\mathbb{A}_{\mathbb{Q}}$ the ring of ad\`{e}les over $\mathbb{Q}$, and $\mathfrak{F}_{m_u}$ the set of all cuspidal automorphic representations of $\mathrm{GL}_{m_u}(\mathbb{A}_{\mathbb{Q}})$ whose central characters are unitary and normalized to be trivial on the diagonally embedded positive reals. For any $\pi_u \in\mathfrak{F}_{m_u}$, there exists a smooth admissible representation $\pi_{u,v}$ of $\GL_{m_u}(\mathbb{Q}_v)$ at each place $v$ of $\mathbb{Q}$ such that $\pi_u$ decomposes as the restricted tensor product $\otimes_v \pi_{u,v}$. 
    
     Given $\pi_u \in\mathfrak{F}_{m_u}$, we denote by $\mathfrak{q}_{\pi_u}$ its arithmetic conductor. At a non-archimedean \mbox{place $v$} corresponding to a prime $p$, the local $L$-function $L(s,\pi_{u,p})$ is defined through the Satake parameters $A_{\pi}(p)=\{\alpha_{1,\pi_u}(p),\ldots,\alpha_{m_u,\pi_u}(p)\}$ as
    \begin{equation}
	\label{eqn:Euler_p_single}
        L(s,\pi_{u,p})=\prod_{r=1}^{m_u}(1-\alpha_{r,\pi_u}(p)p^{-s})^{-1}=\sum_{k=0}^{\infty}\frac{\lambda_{\pi_u}(p^k)}{p^{ks}},
    \end{equation}
    where the coefficients $\lambda_{\pi_u}(p^k)$ are given by
    \[
    \lambda_{\pi_u}(p^k) = \sum_{\substack{\ell_1+\ell_2+\cdots+\ell_{m_u}=k\\\ell_i\in\mathbb{N}\cup\{0\}}}\alpha_{1,{\pi_u}}(p)^{\ell_1}\alpha_{2,{\pi_u}}(p)^{\ell_2}\cdots\alpha_{m_u,{\pi_u}}(p)^{\ell_{m_u}}.
    \]
    If $p\nmid \mathfrak{q}_{\pi_u}$, then for all $1\le r\le m_u$, we have $\alpha_{r,\pi_u}(p)\neq0$.  If $p\mid \mathfrak{q}_{\pi_u}$, then there might exist $r$ such that $\alpha_{r,\pi_u}(p)=0$.  The standard $L$-function $L(s,\pi_u)$ associated to $\pi_u$ is
    \begin{equation}\label{def:dirichlet series for Lu(s)}
        L(s,\pi_u)=\prod_{p} L(s,\pi_{u,p})=\sum_{n=1}^{\infty}\frac{\lambda_{\pi_u}(n)}{n^s}.
    \end{equation}
    The Euler product and Dirichlet series converge absolutely when $\mathrm{Re}(s)>1$. 
	
    At the archimedean place of $\mathbb{Q}$, the local $L$-function $ L(s,\pi_{u, \infty})$ is defined using $m_u$ spectral parameters $\mu_{\pi_u}(r)\in\mathbb{C}$:
    \[
        L(s,\pi_{u,\infty}) = \pi^{-m_us/2}\prod_{r=1}^{m_u}\Gamma\Big(\frac{s+\mu_{\pi_u}(r)}{2}\Big).
    \]

    Let $\widetilde{\pi}_u\in\mathfrak{F}_{m_u}$ be the contragredient representation of $\pi_u$. We have $\mathfrak{q}_{\pi_u}=\mathfrak{q}_{\widetilde{\pi}_u}$, and 
    \[
        \{\alpha_{r,\widetilde{\pi}_u}(p)\}=\{\overline{ \alpha_{r,\pi_u}(p)}\},\qquad \{\mu_{\widetilde{\pi}_u}(r)\}=\{\overline{\mu_{\pi_u}(r)}\}.
    \]
    Let $\omega_{\pi_u}$ be the order of the pole of $L(s,\pi_u)$ at $s=1$: this is $0$ unless $m_u = 1$ and $\pi_u$ is trivial, in which case $L(s,\pi_u)$ is the Riemann zeta function and has a simple pole at $s=1$.
    The completed $L$-function
    \[
        \Lambda(s,\pi_u) = (s(1-s))^{\omega_{\pi_u}}\mathfrak{q}_{\pi_u}^{s/2}L(s,\pi_u)L(s,\pi_{u,\infty})
    \]
    is entire of order $1$, and there exists a complex number $W(\pi_u)$ of modulus $1$ such that for all $s\in\mathbb{C}$, the functional equation $\Lambda(s,\pi_u)=W(\pi_u)\Lambda(1-s,\widetilde{\pi}_u)$ holds. The analytic conductor of $\pi_u$ \cite{IS} is given by
    \begin{equation}
	\label{eqn:analytic_conductor_def for Lu(s)}
            \mathfrak{C}(s, \pi_u)\coloneqq \mathfrak{q}_{\pi_u}\prod_{r=1}^{m_u} (3+|s+\mu_{\pi_u}(r)|),\qquad \mathfrak{C}(\pi_u)\coloneqq \mathfrak{C}(0,\pi_u).
    \end{equation}

    The Grand Riemann Hypothesis (GRH) states that all zeros of the completed $L$ function $\Lambda(s,\pi_u)$ are on the critical line $\Real(s) = 1/2$. We always indicate the $L$-functions for which we assume GRH. The Generalized Ramanujan--Petersson Conjecture (GRC) states that $\alpha_{r,\pi_u}(p)$ in \eqref{eqn:Euler_p_single} satisfy $|\alpha_{r,\pi_u}(p)|=1$ for all primes $p\nmid\mathfrak{q}_{\pi_u}$ and for all $1 \le r \le m_u$. In general, this conjecture is open. Towards GRC, Luo, Rudnick, and Sarnak \cite{LRS} and M\"uller and Speh \cite{MS} established the existence of a \mbox{constant $\theta_{m_u}\in[0,\frac{1}{2}-\frac{1}{m_u^2+1}]$} such that we have
    \begin{equation}
	\label{eqn:LRS_finite}
            |\alpha_{r,\pi_u}(p)|\le  p^{\theta_{m_u}}\qquad\text{ and }\qquad\Real(\mu_{\pi_u}(r))\ge -\theta_{m_u},
    \end{equation}
    and GRC predicts that one may take $\theta_{m_u}=0$ in \eqref{eqn:LRS_finite}. For the case $m_u=2$, Kim and Sarnak \cite[Appendix 2]{Kim-GL4} and Blomer and Brumley \cite{BB} proved the bound $\theta_2 \le 7/64$.

\section{An asymptotic functional equation for \texorpdfstring{$F(s,\bm{\pi})$}{F}}\label{sec:asymp-F}

In this section, we present technical lemmas underpinning the proof of Theorem \ref{thm:number of zeros}, beginning with a bound on the $n$-th coefficient of the Dirichlet series for $F(s,\bm{\pi})$.
\begin{lem}\label{lem:bound of coeff} 
Let $F(s,\bm{\pi}) \in \mathcal{B}$ be defined as in \eqref{def:F} satisfying \eqref{assump:assumption 1}. Let $\theta_{m_u} \in [0, \frac{1}{2}-\frac{1}{m_u^2+1}]$ be the best bound towards GRC of $L(s, \pi_u)$ and let $\theta = \max\{\theta_{m_u}: 1 \le u \le V\}$. Recall the definition of $\eta_n$ in \eqref{eq:Dirichlet series}.
For any small $\varepsilon > 0$, we have $\eta_n = O_{F,\varepsilon}(n^{\theta+\varepsilon})$.
\end{lem}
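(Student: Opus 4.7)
The plan is to bound the $n$-th Dirichlet coefficient of each monomial $F_j(s,\bm{\pi})$ of $F(s,\bm{\pi})$ separately and then combine them via the triangle inequality over the finite index set $1\le j\le M$. Hence it suffices to show that each $F_j$ has Dirichlet coefficients of size $O_{F,\varepsilon}(n^{\theta+\varepsilon})$.

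The starting point is a pointwise bound on the Dirichlet coefficients of each $L(s,\pi_u)$. From the expression of $\lambda_{\pi_u}(p^k)$ as a complete homogeneous symmetric polynomial of degree $k$ in the $m_u$ Satake parameters, the Luo--Rudnick--Sarnak bound \eqref{eqn:LRS_finite} gives $|\lambda_{\pi_u}(p^k)| \le \binom{m_u + k - 1}{k} p^{k\theta_{m_u}}$. Multiplicativity of $\lambda_{\pi_u}$ then yields $|\lambda_{\pi_u}(n)| \le d_{m_u}(n)\, n^{\theta_{m_u}}$, where $d_{m_u}$ is the $m_u$-th generalized divisor function. Since $d_{m_u}(n) \ll_{m_u,\varepsilon} n^{\varepsilon}$, this gives $|\lambda_{\pi_u}(n)| \ll_{\pi_u, \varepsilon} n^{\theta_{m_u}+\varepsilon}$.

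Next, termwise differentiation of \eqref{def:dirichlet series for Lu(s)} shows that $L^{(l)}(s,\pi_u) = \sum_{n\ge 1} (-\log n)^l \lambda_{\pi_u}(n) n^{-s}$ for $\Real(s)>1$, so its $n$-th Dirichlet coefficient is bounded by $(\log n)^l |\lambda_{\pi_u}(n)| \ll_{\pi_u,l,\varepsilon} n^{\theta_{m_u}+\varepsilon}$, the $(\log n)^l$ factor being absorbed into $n^{\varepsilon}$. To control the product structure of $F_j$, I invoke the elementary fact that if $A(s)=\sum a_n n^{-s}$ and $B(s)=\sum b_n n^{-s}$ have coefficients bounded by $n^{\alpha+\varepsilon}$ and $n^{\beta+\varepsilon}$ with $\alpha,\beta \ge 0$, then the coefficients of the product satisfy
\[
\Big|\sum_{d\mid n} a_d\, b_{n/d}\Big| \le \sum_{d\mid n} d^{\alpha+\varepsilon}(n/d)^{\beta+\varepsilon} \le n^{\max(\alpha,\beta)+\varepsilon}\, d(n) \ll_{\varepsilon} n^{\max(\alpha,\beta) + 2\varepsilon},
\]
using $d^{\alpha+\varepsilon}(n/d)^{\beta+\varepsilon} \le d^{\max(\alpha,\beta)+\varepsilon}(n/d)^{\max(\alpha,\beta)+\varepsilon} = n^{\max(\alpha,\beta)+\varepsilon}$, which is valid when $d,n/d\ge 1$ and $\alpha,\beta\ge 0$. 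Iterating this bound over the finitely many factors in $F_j(s,\bm{\pi}) = c_j \prod_{u=1}^{V}\prod_{l=0}^{k_u} L^{(l)}(s,\pi_u)^{d_{u,l,j}}$, and using $\theta_{m_u}\le \theta$, I conclude that the $n$-th Dirichlet coefficient of $F_j$ is $O_{F,\varepsilon}(n^{\theta+\varepsilon})$. Summing over the $M$ monomials preserves this bound and proves the lemma.

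The only delicate point is bookkeeping: each application of the divisor estimate, each differentiation, and each convolution injects a factor of $n^{\varepsilon}$. However, the total number of such operations is bounded in terms of $F$ only, so one starts from a sufficiently small $\varepsilon'>0$ and absorbs all $n^{\varepsilon'}$-losses into the final $n^{\varepsilon}$. In particular, there is no substantive analytic obstacle beyond the unconditional Luo--Rudnick--Sarnak bound \eqref{eqn:LRS_finite}; the lemma is essentially a quantitative packaging of this input.
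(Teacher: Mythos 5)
Your proof is correct and follows essentially the same route as the paper: decompose $F$ into its monomials $F_j$, bound the individual Dirichlet coefficients $\lambda_{\pi_u}(n)$ using the unconditional Luo--Rudnick--Sarnak bound \eqref{eqn:LRS_finite} together with a divisor-function factor, absorb the logarithmic losses from differentiation into $n^{\varepsilon}$, and control the product structure via a divisor-type convolution estimate. The only cosmetic difference is that you iterate pairwise Dirichlet convolutions, whereas the paper writes out the full multiplicative convolution over all factors of $F_j$ at once and bounds the resulting count by a single generalized divisor function $\tau_{\sum d_{u,l,j}}(n)$; the underlying bookkeeping is identical.
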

\begin{proof}
For any $1 \le j \le M$ and $\Real(s) > 1$, the $j$-th term of $F(s,\bm{\pi})$ admits the expansion
\begin{align*}
    \sum_{n=1}^{\infty} \frac{\eta_n^{(j)}}{n^s} &:= c_j \prod_{u=1}^V \prod_{l=0}^{k_u} L^{(l)}(s,\pi_u)^{d_{u,l,j}} = c_j \prod_{u=1}^V\prod_{l=0}^{k_u}\Big(\sum_{n=1}^{\infty}  \frac{\lambda_u(n)(-\log n)^l}{n^s}\Big)^{d_{u,l,j}}.
\end{align*}
The coefficient $\eta_n^{(j)} \in \mathbb{C}$ can be described by the convolution of the coefficients in the series. For $1\le u\le V$, $0\le l \le k_u$, $0\le w \le d_{u,l,j}$, and $\alpha_{u;l;w} \in \mathbb{N}$, we have
\begin{align*}
    \eta_n^{(j)} &= c_j \sum_{\prod_{u=1}^V\prod_{l=0}^{k_u} \prod_{w=0}^{d_{u,l,j}}\alpha_{u;l ;w}
    = n } \Big[\prod_{u=1}^V\prod_{l=0}^{k_u} \prod_{w=0}^{d_{u,l,j}}\lambda_{u}(\alpha_{u;l ;w})  (-\log \alpha_{u;l;w})^l \Big].
\end{align*}
 Using \eqref{eqn:LRS_finite} and the definition of $\theta$, we have $\theta < 1/2$  and $|\lambda_{u}(\ell)| \le \ell^{\theta}\tau_m(\ell)$ uniformly for all $u$. Here,  $\tau_m(\ell)$ denotes the number of ways to write $\ell$ as the product of $m$ positive integers. Bounding each logarithmic term and each $\tau_m(\ell)$ trivially, we have for $\varepsilon > 0$,
\begin{equation*}
   \eta_n^{(j)}  \ll_{F,\varepsilon} n^{\theta+\varepsilon} (\log n)^{\sum_{u=1}^V\sum_{l=0}^{k_u} l d_{u,l,j}} \sum_{\prod_{u=1}^V\prod_{l=0}^{k_u} \prod_{w=0}^{d_{u,l,j}}\alpha_{u;l ;w}=n} 1 \ll_{F,\varepsilon} n^{\theta+\varepsilon}\tau_{{\sum_{u=1}^V\sum_{l=0}^{k_u} d_{u,l,j}}}(n),
\end{equation*}
which is $O_{F,\varepsilon}(n^{\theta+\varepsilon})$.
As a result, we have $\eta_n$ is at most $O_{F,\varepsilon}(Mn^{\theta+\varepsilon}) = O_{F,\varepsilon}(n^{\theta+\varepsilon})$.
\end{proof}

Next, we use the coefficient bound obtained from Lemma \ref{lem:bound of coeff} to derive an asymptotic formula \mbox{for $F(s,\bm{\pi})$} that is valid for all large $\Real(s)$.
\begin{lem}\label{lem:tail of series}
Write  $s = \sigma+it$. Let $F(s,\bm{\pi}) \in \mathcal{B}$ be defined as in \eqref{def:F} satisfying \eqref{assump:assumption 1}. Recall the definition of $\eta_n$ and $n_F$ in \eqref{eq:Dirichlet series}. For large $\sigma$, we have
\begin{equation}\label{eq:tail of series}
    F(s,\bm{\pi}) = \frac{\eta_{n_F}}{n_F^s} + O_F((n_F+1)^{-\sigma}).
\end{equation}
\end{lem}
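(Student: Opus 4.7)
The plan is to exploit the Dirichlet series representation \eqref{eq:Dirichlet series} and the polynomial coefficient bound from Lemma~\ref{lem:bound of coeff}. By the definition of $n_F$ as the first index with a nonzero coefficient, I would split
\[
F(s,\bm{\pi}) = \frac{\eta_{n_F}}{n_F^s} + \sum_{n = n_F+1}^{\infty} \frac{\eta_n}{n^s},
\]
so that the entire content of the lemma reduces to bounding the tail by $O_F((n_F+1)^{-\sigma})$ for $\sigma$ sufficiently large.

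Next, I would invoke Lemma~\ref{lem:bound of coeff} with $\varepsilon$ chosen small enough that $\theta+\varepsilon < 1/2$ (recall $\theta < 1/2$ by \eqref{eqn:LRS_finite}), giving $|\eta_n| \ll_{F,\varepsilon} n^{\theta+\varepsilon}$. Hence
\[
\Big|\sum_{n=n_F+1}^{\infty} \frac{\eta_n}{n^s}\Big| \le \sum_{n=n_F+1}^{\infty} \frac{|\eta_n|}{n^\sigma} \ll_F \sum_{n=n_F+1}^{\infty} n^{\theta+\varepsilon-\sigma}.
\]
For $\sigma$ large (say $\sigma > \theta+\varepsilon+2$), the integrand $x^{\theta+\varepsilon-\sigma}$ is positive and decreasing on $[n_F+1,\infty)$, so by standard integral comparison,
\[
\sum_{n=n_F+1}^{\infty} n^{\theta+\varepsilon-\sigma} \le (n_F+1)^{\theta+\varepsilon-\sigma} + \int_{n_F+1}^{\infty} x^{\theta+\varepsilon-\sigma}\,dx \ll (n_F+1)^{\theta+\varepsilon-\sigma},
\]
where the implied constant absorbs $1/(\sigma-\theta-\varepsilon-1) = O(1)$.

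Finally, since $n_F$ and $\theta$ depend only on $F$, the factor $(n_F+1)^{\theta+\varepsilon}$ is an $F$-dependent constant, so the tail is $O_F((n_F+1)^{-\sigma})$, yielding \eqref{eq:tail of series}. There is really no serious obstacle here: the proof is essentially a one-line estimate, and the only point requiring any care is ensuring that $\varepsilon$ is small enough that $\theta+\varepsilon < 1/2$ (which guarantees convergence for the relevant range of $\sigma$) and that the $F$-dependence of the constant $(n_F+1)^{\theta+\varepsilon}$ is permissible under the $O_F$ notation.
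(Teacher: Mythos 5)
Your proof is correct and follows essentially the same route as the paper: both split off the leading term, invoke Lemma~\ref{lem:bound of coeff} for $|\eta_n| \ll_{F,\varepsilon} n^{\theta+\varepsilon}$, bound the tail by integral comparison, and absorb the $F$-dependent factor $(n_F+1)^{O(1)}$ into the implied constant. The only cosmetic difference is that the paper separates the $n = n_F+1$ term from the rest of the tail before estimating, whereas you treat the entire tail $n \ge n_F+1$ at once; this changes nothing of substance.
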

\begin{proof} Using \eqref{eq:Dirichlet series}, we can write
\begin{equation}\label{eq:dirichlet series-expansion}
    F(s,\bm{\pi}) = \frac{\eta_{n_F}}{n_F^s} + \frac{\eta_{n_F+1}}{(n_F+1)^s} + \sum_{n > n_F+1} \frac{\eta_n}{n^s}.
\end{equation}
The second term is clearly $O_F((n_F+1)^{-\sigma}).$  For the last term, applying Lemma \ref{lem:bound of coeff}, we obtain
\begin{align}
\sum_{n > n_F+1} \frac{\eta_n}{n^s} \ll_{F,\varepsilon} \sum_{n > n_F+1} \frac{n^{\theta+\varepsilon}}{n^\sigma} \ll_{F,\varepsilon} (n_F +1)^{1+\theta-\sigma+\varepsilon} \ll_{F} (n_F +1)^{-\sigma}.\label{eq:delicate upper bound for lemma 3.2}
\end{align}
The lemma follows by putting the estimates for the second and last terms in \eqref{eq:dirichlet series-expansion}.
\end{proof}

Next, for any $\bm{\pi} = ( \pi_1,\pi_2,\cdots,\pi_V)$, we define $\widetilde{\bm{\pi}} =(\widetilde{\pi}_1, \ldots, \widetilde{\pi}_V)$ and hence
\begin{equation}\label{def:F functional equation key component}
  F(s,\widetilde{\bm{\pi}}) = \sum_{j=1}^{M} c_j  \prod_{u=1}^V \prod_{l=0}^{k_u}  L^{(l)}(s,\widetilde{\pi}_u)^{d_{u,l,j}}.
\end{equation}
We then apply Lemmas~\ref{lem:bound of coeff} and~\ref{lem:tail of series} to establish the relation between  ${F}(1 - s,\widetilde{\bm{\pi}})$ and $\{L(s, \pi_u)\}_{1 \le u\le V}$. This relation serves as an analogue of the functional equation that relates $\zeta(s)$ to $\zeta(1-s)$ and is a key ingredient for the proof of Theorem \ref{thm:number of zeros}. Before proceeding, we define the auxiliary quantity $B(s,l,\pi_u)$ for $1\le u\le V$ and $0 \le l \le k_u$. If $l \neq 0$, we define
\begin{align}\label{def:B(k,pi
)}
B(s,l,\pi_u) &:= \hspace{-0.1in}\sum_{\substack{\sum_{1\le r \le m_u} (\ell_{4,r} + \ell_{5,r}) =l\\ 0 \le \ell_{4,r},\ell_{5,r} \le l }} \binom{l}{(\ell_{4,r})_{r=1}^{m_u}, (\ell_{5,r})_{r=1}^{m_u}} \notag\\
&\qquad \cdot \prod_{r=1}^{m_u} \frac{1}{2^{\ell_{4,r} + \ell_{5,r}}}\Big(\log \Big(\frac{s+\mu_{\pi_u}(r)}{2}\Big)\Big)^{\ell_{4,r}} \Big(\log \Big(\frac{1+s-\overline{\mu_{\pi_u}(r)}}{2}\Big)\Big)^{\ell_{5,r}}.
\end{align} 
Otherwise, if $l=0$, we define $B(s,0,\pi_u):=1$.

\begin{lem}\label{lem:FE}
Write $s= \sigma+it$. Let $F(s,\bm{\pi}) \in \mathcal{B}$ be defined as in \eqref{def:F} satisfying \eqref{assump:assumption 1}. If $c > 3/2$ and $\varepsilon > 0$, then in the region 
\[
R := \{ s \in \mathbb{C} : \sigma > c \} \cap \bigcup_{u=1}^{V}\bigcup_{r=1}^{m_u}\bigcup_{n=-\infty}^{\infty}\{s \in \mathbb{C} :  |s-(2n-1+\overline{\mu_{\pi_u}(r)})| \ge \varepsilon\},
\]
for large $|t|$, there exists some $\Cr{conductor-dependence-1} := \Cr{conductor-dependence-1}(F) > 0$ such that $F(1-s, \widetilde{\bm{\pi}})$ equals
\begin{multline}\label{eq:FE-lemma-2}
	(-1)^{\deg_{\mathrm{der}}(F(s,\bm{\pi}))}\Big[\sum_{j \in J} c_j\prod_{u=1}^V\Big(L(1-s,\widetilde{\pi}_u)^{\sum_{l =0}^{k_u}d_{u,l, j}} \prod_{l=0}^{k_u} B(s,l,\pi_u)^{d_{u,l,j}} \Big)\Big](1+O_{F}(e^{-\Cr{conductor-dependence-1}\sigma})),
\end{multline}
and, for large $\sigma$, we have $F(1-s, \widetilde{\bm{\pi}})$ equals
\begin{multline}\label{eq:FE-lemma}
  (-1)^{\deg'_{\mathrm{der}}(F(s,\bm{\pi}))}\Big[\sum_{j \in J'} c_j \prod_{u=1}^V\Big(L(1-s,\widetilde{\pi}_u)^{\sum_{l =0}^{k_u}d_{u,l, j}} \prod_{l=0}^{k_u} B(s,l,\pi_u)^{d_{u,l,j}} \Big)\Big](1+O_{F}(\tfrac{1}{\log s})),
\end{multline}
where $L(1-s, \widetilde{\pi}_u)$ can be expressed in terms of $L(s, \pi_u)$ as in \eqref{eq:FE for L(s,pi)}.
\end{lem}

\begin{proof} 
Using the functional equation from Section \ref{sec:L-functions} and the definition of $L_{\infty}(s,\pi)$, we have for each $1\le u\le V$,
    \begin{multline}\label{eq:FE for L(s,pi)}
        L(1-s, \widetilde{\pi}_u)   = W(\pi_u)^{-1}\mathfrak{q}_{\pi_u}^{s-\frac{1}{2}}\pi^{-\frac{m_u}{2}-m_us}  L(s, \pi_u) \\
        \times \prod_{r=1}^{m_u}  \cos\Big(\frac{s-\overline{\mu_{\pi_u}(r)}}{2}\pi \Big) \Gamma\Big(\frac{s+\mu_{\pi_u}(r)}{2}\Big)\Gamma\Big(\frac{1+s-\overline{\mu_{\pi_u}(r)}}{2}\Big),
    \end{multline}
upon using the fact that $\Gamma(1-s)\Gamma(s) = \pi/{\sin(\pi s)}$. For any fixed $u$, we write
\begin{equation}\label{eq:factor}
    L(1-s,\widetilde{\pi}_u) = f_1(s)f_2(s)\prod_{r=1}^{m_u}f_{3,r}(s)f_{4,r}(s)f_{5,r}(s),
\end{equation}
where
\begin{alignat*}{2}
   &f_1(s) = W(\pi_u)^{-1} \mathfrak{q}_{\pi_u}^{s-\frac{1}{2}}\pi^{-\frac{m_u}{2}-m_us}, \\  &f_2(s) = L(s, \pi_u),
\end{alignat*}
and, for any $1 \le r \le m_u$,
\begin{alignat*}{2}    
    & f_{3,r} (s) = \cos\Big(\frac{s-\overline{\mu_{\pi_u}(r)}}{2}\pi\Big), \\
    &f_{4,r}(s) = \Gamma\Big(\frac{s+\mu_{\pi_u}(r)}{2}\Big),\\
    &f_{5,r}(s) = \Gamma\Big(\frac{1+s-\overline{\mu_{\pi_u}(r)}} {2}\Big). 
\end{alignat*}

Furthermore, for any $1\le r \le m_u$, let $\ell_1$, $\ell_2$, $\ell_{3,r}$, $\ell_{4,r}$, and $\ell_{5,r}$  be any integers in the range $[0,l]$ where $0\le l \le k_u$ such that 
\[
\ell_1+\ell_2+\sum_{r=1}^{m_u} (\ell_{3,r}+\ell_{4,r}+\ell_{5,r})=l.
\]
The corresponding binomial coefficient of the above partition is
\begin{equation}\label{eq:binom}
\binom{l}{\ell_1,\ell_2, (\ell_{3,r})_{r=1}^{m_u},(\ell_{4,r})_{r=1}^{m_u}, (\ell_{5,r})_{r=1}^{m_u}} =O_{l}(1).
\end{equation}
Differentiating \eqref{eq:factor} $l$ times and dividing both sides by $L(1-s,\widetilde{\pi}_u)$, we obtain
\begin{multline}\label{eq:diff-k-times}
\frac{L^{(l)}(1-s,\widetilde{\pi}_u)}{L(1-s,\widetilde{\pi}_u)} = (-1)^{l} \sum_{\substack{ \ell_1+\ell_2\\ +\sum_{r=1}^{m_u}(\ell_{3,r}+\ell_{4,r}+\ell_{5,r})=l}} \binom{l}{\ell_1,\ell_2, (\ell_{3,r})_{r=1}^{m_u},(\ell_{4,r})_{r=1}^{m_u}, (\ell_{5,r})_{r=1}^{m_u}}\\ 
\times \frac{f_1^{(\ell_1)}(s)}{f_1(s)} \frac{f_2^{(\ell_2)}(s)}{f_2(s)}\prod_{r=1}^{m_u}\frac{f_{3,r}^{(\ell_{3,r})}(s)}{f_{3,r}(s)}\frac{f_{4,r}^{(\ell_{4,r})}(s)}{f_{4,r}(s)}\frac{f_{5,r}^{(\ell_{5,r})}(s)}{f_{5,r}(s)}.
\end{multline}

We now compute each factor in the summand of \eqref{eq:diff-k-times}.
\begin{enumerate}[leftmargin=0.3in]
    \item \textbf{Contribution from $f_1(s)$}: By direct computation, we have 
    \[
    f_1^{(\ell_1)}(s) = \Big(\log\Big(\frac{\mathfrak{q}_{\pi_u}}{\pi^{m_u}}\Big)\Big)^{\ell_1} W(\pi_u)^{-1} \mathfrak{q}_{\pi_u}^{s-\frac{1}{2}}\pi^{-\frac{m_u}{2}-m_us},
    \]
    and hence 
    \begin{equation}\label{eq:f_1}
    \frac{f_1^{(\ell_1)}(s)}{f_1(s)} = \Big(\log\Big(\frac{\mathfrak{q}_{\pi_u}}{\pi^{m_u}}\Big)\Big)^{\ell_1}  \ll_{\ell_1, \pi_u} 1.
    \end{equation}

    \item \textbf{Contribution from $f_2(s)$:} Since we assume that $\sigma > c > 3/2$, we have 
    \[
    |L^{(\ell_2)}(s, \pi_u)| = 
    \Big|\Big(\sum_{n=1}^{\infty} \frac{\lambda_{u}(n)}{n^s}\Big)^{(\ell_2)}\Big| \le \sum_{n=1}^{\infty} \Big|\frac{\lambda_{u}(n)(\log n)^{\ell_2}}{n^s}\Big| \ll_{\ell_2,\varepsilon} \sum_{n=1}^{\infty} \frac{n^{\theta+\varepsilon}}{n^c} \ll_{\ell_2,\varepsilon} 1.
    \]
    On the other hand, since $L(s,\pi_u)$ converges absolutely for $\sigma>c>3/2$, using its Euler product, we also have $|1/{L(s, \pi_u)}|\ll 1$.
    Therefore, we conclude that 
    \[
    \frac{f_2^{(\ell_2)}(s)}{f_2(s)} =\frac{L^{(\ell_2)}(s, \pi_u)}{L(s, \pi_u)} \ll_{\ell_2} 1. 
    \]

    \item \textbf{Contribution from $f_{3,r}(s)$ for $1 \le r \le m_u$:} By direct computation, we have
    \begin{equation*}
        \begin{aligned}
           \frac{f_{3,r}^{(\ell_{3,r})}(s)}{f_{3,r}(s)} 
           &= \begin{cases} (-1)^{\frac{\ell_{3,r}}{2}} (\frac{\pi}{2})^{\ell_{3,r}} & \text{if } \ell_{3,r} \text{ is even}; \\ (-1)^{\frac{\ell_{3,r}-1}{2}} (\frac{\pi}{2})^{\ell_{3,r}} \tan(\frac{s-\overline{\mu_{\pi_u}(r)}}{2}\pi)& \text{if } \ell_{3,r} \text{ is odd}. \end{cases} 
        \end{aligned}
    \end{equation*}
    In the even case, it is clear that the contribution is $O_{\ell_{3,r}}(1)$, so it remains to consider the odd case.
    Setting $w = e^{i(s-\overline{\mu_{\pi_u}(r)})\frac{\pi}{2}}$, if $w \neq 0$, we can write
    \begin{equation}\label{eq:tangent}
        \Big|\tan\Big(\frac{s-\overline{\mu_{\pi_u}(r)}}{2}\pi\Big)\Big| = \Big|\frac{w-\frac{1}{w}}{w+\frac{1}{w}}\Big| = \Big|\frac{w^2-1}{w^2+1}\Big|.
    \end{equation}
    Otherwise, if $|w|$ approaches $0$, the right side of  \eqref{eq:tangent} approaches $1$, which is bounded.

    We first consider when $|w| \ge 2$. By the triangle inequality, we obtain 
    \[
    \Big|\tan\Big(\frac{s-\overline{\mu_{\pi_u}(r)}}{2}\pi\Big)\Big|  \le  \frac{|w|^2+1}{|w|^2-1} \le  \frac{|w|^2+\frac{|w|^2}{4}}{|w|^2-\frac{|w|^2}{4}} \le \frac{5}{3}.
    \]
    Suppose now that $0 < |w| \le 2$ and consider
    \[
    S_{1,u} = \{ 0< |w| \le 2 \} \cap \{ |s-(2n-1+\overline{\mu_{\pi_u}(r)})| \ge \varepsilon,(n\in\mathbb{Z})\}.
    \]
    Since $|s-(2n-1+\overline{\mu_{\pi_u}(r))}| \ge \varepsilon$ for all $n \in \mathbb{Z}$. then there exists $\varepsilon' > 0$ depending on $\varepsilon$ such that $w$ is bounded away from $\pm i$ by at least $\varepsilon'$, i.e., $|w \pm i| > \varepsilon'$. 
    We then have 
    \[
    S_{1,u} \subseteq  \{ |w| \le 2 \} \cap \{ |w \pm i| > \varepsilon' \} := S_{2,u}.
    \]
    Since $S_{2,u}$ is compact and $\frac{|w|^2+1}{|w|^2-1}$ is continuous on $S_{2,u}$, it follows that $\frac{|w|^2+1}{|w|^2-1}$ is bounded \mbox{in $S_{2,u}$.} Combining all the cases, we conclude that ${f_{3,r}^{(\ell_{3,r})}(s)}/{f_{3,r}(s)}$ is $O_{\ell_{3,r}}(1)$.

    \item \textbf{Contribution from $f_{4,r}(s)$ for $1 \le r \le m_u$:} 
    From \cite[p.~679]{spira}, we have
\[
    \frac{f_{4,r}^{(\ell_{4,r})}(s)}{f_{4,r}(s)} = \frac{\Gamma^{(\ell_{4,r})}\big((s+\mu_{\pi_u}(r))/{2}\big)}{\Gamma\big((s+\mu_{\pi_u}(r))/{2}\big)}
    = \frac{1}{2^{\ell_{4,r}}}\Big(\log \Big(\frac{s+\mu_{\pi_u}(r)}{2}\Big)\Big) ^{\ell_{4,r}}(1+O_{\ell_{4,r}, \pi_u}(1/\log s))).\]
    
    \item \textbf{Contribution from $f_{5,r}(s)$ for $1 \le r \le m_u$:} Similarly to $f_{4,r}(s)$, we have 
    \[
    \frac{f_{5,r}^{(\ell_{5,r})}(s)}{f_{5,r}(s)} = \frac{1}{2^{\ell_{5,r}}}\Big(\log \Big(\frac{1+s-\overline{\mu_{\pi_u}(r)}}{2}\Big)\Big) ^{\ell_{5,r}}(1+O_{\ell_{5,r},\pi_u}(1/\log s)).
    \]
\end{enumerate}

Going back to \eqref{eq:diff-k-times}, we extract the summands when
$\sum_{r= 1}^{m_u} (\ell_{4,r} + \ell_{5,r}) = l$ and bound the rest of the summands using the above approximations, that is, 
\begin{align}\label{eq:combine-factors}
L^{(l)}(1-s,\widetilde{\pi}_u) &= (-1)^{l} \sum_{\sum_{r=1}^{m_u} (\ell_{4,r} + \ell_{5,r}) =l}  \binom{l}{(\ell_{4,r})_{r=1}^{m_u}, (\ell_{5,r})_{r=1}^{m_u}}\prod_{r=1}^{m_u} \frac{f_{4,r}^{(\ell_{4,r})}(s)}{f_{4,r}(s)}\frac{f_{5,r}^{(\ell_{5,r})}(s)}{f_{5,r}(s)} L(1-s,\widetilde{\pi}_u)\notag\\
&\quad+ O_{l}\Big(\sum_{\substack{\sum_{r=1}^{m_u} (\ell_{4,r} + \ell_{5,r})<l}}  f_1^{(\ell_1)}(s)f_2^{(\ell_2)}(s) \prod_{r=1}^{m_u}f^{(\ell_{3,r})}_{3,r}(s)f^{(\ell_{4,r})}_{4,r}(s)f^{(\ell_{5,r})}_{5,r}(s)\Big),
\end{align}
where the second sum is taken over the condition $\ell_1+\ell_2+\sum_{r=1}^{m_u}(\ell_{3,r}+\ell_{4,r}+\ell_{5,r})=l$.

Recall the definition of $B(s,l, \pi_u)$ in \eqref{def:B(k,pi
)}. The main term in \eqref{eq:combine-factors} then equals
\begin{align}
     (-1)^{l}L(1-s,\widetilde{\pi}_u) B(s, l,\pi_u)(1+O_{\pi_u,l}(1/\log s)). \label{eq:main term}
\end{align}

On the other hand, the second summand on the right side of \eqref{eq:combine-factors} is bounded above by
\begin{equation*}
\begin{aligned}
    & O_{l} \Big(\sum_{\sum_{r=1}^{m_u} (\ell_{4,r} + \ell_{5,r}) < l} \hspace{-0.2in}L(1-s,\widetilde{\pi}_u)  \cdot \frac{f_1^{(\ell_1)}(s)}{f_1(s)} \frac{f_2^{(\ell_2)}(s)}{f_2(s)}\prod_{r=1}^{m_u}\frac{f_{3,r}^{(\ell_{3,r})}(s)}{f_{3,r}(s)}\frac{f_{4,r}^{(\ell_{4,r})}(s)}{f_{4,r}(s)}\frac{f_{5,r}^{(\ell_{5,r})}(s)}{f_{5,r}(s)}\Big)\\
    &= O_{l} \Big( \sum_{\sum_{r=1}^{m_u} (\ell_{4,r} + \ell_{5,r}) < l} \hspace{-0.2in} L(1-s,\widetilde{\pi}_u) \prod_{r=1}^{m_u} \Big(\log \Big(\frac{s+\mu_{\pi_u}(r)}{2}\Big)\Big) ^{\ell_{4,r}} \Big(\log \Big(\frac{1+s-\overline{\mu_{\pi_u}(r)}}{2}\Big)\Big) ^{\ell_{5,r}} \Big),
\end{aligned}
\end{equation*}
where both sums are taken over the condition $\ell_1+\ell_2+\sum_{r=1}^{m_u}(\ell_{3,r}+\ell_{4,r}+\ell_{5,r})=l$. Since $\sum_{r=1}^{m_u} (\ell_{4,r} + \ell_{5,r}) < l$, the second summand on the right side of \eqref{eq:combine-factors} can be absorbed in the error term in \eqref{eq:main term}, that is,
\begin{equation}\label{eq:FE-k-derivative}
    L^{(l)}(1-s,\widetilde{\pi}_u) = (-1)^{l}L(1-s,\widetilde{\pi}_u) B(s, l,\pi_u) (1+O_{\pi_u,l}({1}/{\log s})).
\end{equation}

Denote the $j$-th term of $F(1-s, \widetilde{\bm{\pi}})$ by
\[
F_j(1-s, \widetilde{\bm{\pi}}) := c_j  \prod_{u=1}^V \prod_{l=0}^{k_u}  L^{(l)}(1-s, \widetilde{\pi}_u)^{d_{u,l,j}}.
\]
We obtain from \eqref{eq:FE-k-derivative} that for any $1 \le j \le M$, $F_j(1-s, \widetilde{\bm{\pi}})$ equals
\begin{equation}\label{eq:each-monomial}
\begin{aligned}
    c_j(-1)^{\deg_{\der}(F_j(1-s, \bm{\pi}))}\prod_{u=1}^V\Big(L(1-s,\widetilde{\pi}_u)^{\sum_{l =0}^{k_u}d_{u,l, j}} \prod_{l=0}^{k_u} B(s,l,\pi_u)^{d_{u,l,j}} \Big)(1+O_{F_j}(1/\log s)).
\end{aligned}
\end{equation}
To finish the proof, we show that the asymptotic behavior of $F(1-s, \widetilde{\bm{\pi}})$ for large $|t|$ is determined entirely by the terms $F_j(1-s, \widetilde{\bm{\pi}})$ such that $j \in J$  and the asymptotic behavior of $F(1-s, \widetilde{\bm{\pi}})$ for large $\sigma$ is determined entirely by the terms $F_j(1-s, \widetilde{\bm{\pi}})$ such that $j \in J'$. That is, the contribution of any term with $j \not\in J$ (resp. $j \not\in J'$) can be absorbed into the error term in \eqref{eq:FE-lemma-2} (resp. \eqref{eq:FE-lemma}).

Recall the functional equation of $L(1-s, \widetilde{\pi}_u)$ in \eqref{eq:FE for L(s,pi)}.
We apply Stirling's formula,  
$|\Gamma(s)| \asymp  |s|^{\text{Re}(s) - 1/2} e^{-|\Imag(s)\arg(s)| - \Real(s)}$,  and the fact that  $|\cos(s)| \asymp e^{|\Imag(s)|}$, both of which hold uniformly for $s \in R$, to the right side of \eqref{eq:FE for L(s,pi)} and obtain
\begin{align}
|L(1-s, \widetilde{\pi}_u)| \asymp_{\pi_u} 
\mathfrak{q}_{\pi_u}^{\sigma} \Big(\Big|\frac{s}{2\pi e}\Big|^{\sigma-1/2}e^{|t|(\frac{\pi}{2}-|\arg(s)|)}\Big)^{m_u},
\label{eq:|L(1-s,pi u)| absolute asymp}
\end{align}
where $\arg(s) \in (-\pi/2, \pi/2)$. On the other hand, since 
\[
\sum_{\sum_{1\le r \le m_u} (\ell_{4,r} + \ell_{5,r}) =l} \binom{l}{(\ell_{4,r})_{r=1}^{m_u}, (\ell_{5,r})_{r=1}^{m_u}} \prod_{r=1}^{m_u} \frac{1}{2^{\ell_{4,r}+\ell_{5,r}}}  = m_u^l, 
\]
it follows that  
\begin{align}
    \prod_{l=0}^{k_u} B(s,l,\pi_u)^{d_{u,l,j}} =  
    (m_u \log s)^{\sum_{l=0}^{k_u}l d_{u,l, j}}(1+O_{\pi_u}(1/\log s)).\label{eq:asymp B}
\end{align}
Applying \eqref{eq:|L(1-s,pi u)| absolute asymp} and \eqref{eq:asymp B}, we have the main term in \eqref{eq:each-monomial} is 
\begin{equation*}
 \asymp_{F_j}  \prod_{u=1}^{V}  \Big[\mathfrak{q}_{\pi_u}^{\sigma} \Big(\Big|\frac{s}{2\pi e}\Big|^{\sigma-1/2}e^{|t|(\frac{\pi}{2}-|\arg(s)|)}\Big)^{m_u}\Big]^{\sum_{l =0}^{k_u}d_{u,l, j}} (\log s)^{\sum_{l=0}^{k_u}l d_{u,l, j}},
\end{equation*}
which can be simplified to 
\begin{equation}\label{eq:comparison}
e^{\sigma \deg_{\cond}(F_j(1-s, \bm{\pi}))}  \Big(\Big|\frac{s}{2\pi e}\Big|^{\sigma-1/2}e^{|t|(\frac{\pi}{2}-|\arg(s)|)}\Big)^{\deg_{\dim}(F_j(1-s, \bm{\pi}))} (\log s)^{\deg_{\der}(F_j(1-s, \bm{\pi}))}. 
\end{equation}

Thus, for the $j$-th term $F_j(1-s,\widetilde{\bm{\pi}})$, we reach the following conclusion regarding its contribution relative to the dominant terms ($j \in J$ or $j \in J'$). For notational simplicity, we use the abbreviations $F := F(1-s,\widetilde{\bm{\pi}})$ and $F_j := F_j(1-s,\widetilde{\bm{\pi}})$. From \eqref{eq:comparison}, it follows that
\begin{enumerate}[leftmargin=0.3in]
    \item \textbf{Dimension-weighted degree:} For large $|s|$, if $\deg_{\dim}(F_j) < \deg_{\dim}(F)$, the contribution of $F_j$ is smaller by a factor of at least $O(1/\log s)$ relative to the maximal dimension-weighted terms. This holds regardless of the magnitudes of $\deg_{\der}(F_j)$, $\deg_{\cond}(F_j)$, $\deg'_{\der}(F_j)$, or $\deg'_{\cond}(F_j)$.

    \item \textbf{For large $|t|$:}  If $\deg_{\dim}(F_j) = \deg_{\dim}(F)$ but $\deg_{\der}(F_j) < \deg_{\der}(F)$, the contribution of this $F_j$ is smaller by a factor of at least $O(1/\log s)$, relative to those terms that achieve both $\deg_{\dim}(F)$ and $\deg_{\der}(F)$.

   If  $\deg_{\dim}(F_j) = \deg_{\dim}(F)$ and $\deg_{\der}(F_j) = \deg_{\der}(F)$ but $\deg_{\cond}(F_j) < \deg_{\cond}(F)$,  then the contribution of $F_j$ is smaller by a factor of at least $O(e^{-\Cr{conductor-dependence-1}\sigma})$ for some $\Cl[abcon]{conductor-dependence-1} := \Cr{conductor-dependence-1}(F) > 0$, compared to the terms that achieve all $\deg_{\dim}(F)$, $\deg_{\der}(F)$, and $\deg_{\cond}(F)$.

    \item \textbf{For large $\sigma$:} If $\deg_{\dim}(F_j) = \deg_{\dim}(F)$ but $\deg_{\cond}(F_j) < \deg'_{\cond}(F)$, the contribution of this $F_j$ is smaller by a factor of at least $O(e^{-\Cr{conductor-dependence-2}\sigma})$ for some $\Cl[abcon]{conductor-dependence-2} := \Cr{conductor-dependence-2}(F) > 0$,  relative to those terms that achieve both $\deg_{\dim}(F)$ and $\deg'_{\cond}(F)$. 
    
   If  $\deg_{\dim}(F_j) = \deg_{\dim}(F)$ and $\deg_{\cond}(F_j) = \deg'_{\cond}(F)$ but $\deg_{\der}(F_j) < \deg'_{\der}(F)$,  then the contribution of $F_j$ is smaller by a factor of at least $O(1/ \log s)$, compared to the terms that simultaneously achieve $\deg_{\dim}(F)$, $\deg'_{\der}(F)$, and $\deg'_{\cond}(F)$.
\end{enumerate} 
The lemma then follows directly from the definition of $F(1-s,\widetilde{\bm{\pi}})$ in \eqref{def:F}, the index sets $J$ in \eqref{def:J} and $J'$ in \eqref{def:J'}, the above discussion, and \eqref{eq:each-monomial}.
\end{proof}

\section{Zero-free regions and the location of trivial zeros}\label{sec:ZFR}
In this section, we apply the  results from the previous section to study the zeros of $F(s,\bm{\pi})$. We begin by locating the \textit{trivial} zeros of $F(s,\bm{\pi})$ and establishing a zero-free region, which confines the remaining zeros of $F(s,\bm{\pi})$ to a vertical strip. The existence of such a strip has been studied for special cases of our work. In the case of the derivatives of $\zeta(s)$, Spira \cite{spira} established a zero-free region $\Real(s) \ge 1.75k + 2$ for $k \ge 3$, and later \cite{spira-2} proved the existence of a corresponding left half-plane, $\Real(s) \le \alpha_k$ for some real $\alpha_k$, containing only trivial real zeros.
The following proposition proves the existence of such a strip in our general case.
\begin{prop}\label{prop:zfr1}
    Write $s= \sigma+it$. Let $F(s,\bm{\pi}) \in \mathcal{B}$ be defined as in \eqref{def:F} satisfying \eqref{assump:assumption 1}. For $\varepsilon > 0$, there exist constants $E_{1}<0$ and $E_2 > 1$, depending only on $F$ and $\varepsilon$, such that $F(s,\bm{\pi}) \neq 0$ in the region
\[
\Bigr(\{ s \in \mathbb{C} : \sigma \le E_1 \} \cap \bigcup_{u=1}^{V}\bigcup_{r=1}^{m_u}\bigcup_{n=-\infty}^{\infty}\{ s \in \mathbb{C} : |s+2n+\mu_{\pi_u}(r)| \ge \varepsilon \}\Bigr) \cup \{ s \in \mathbb{C} :  \sigma \ge E_2 \} .
\]
\end{prop}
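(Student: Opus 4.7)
The plan is to prove the two zero-free half-planes separately, drawing on the two analytic inputs developed in Section~\ref{sec:asymp-F}: the Dirichlet series asymptotic of Lemma~\ref{lem:tail of series} on the right, and the asymptotic functional equation of Lemma~\ref{lem:FE} on the left.

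For $\sigma \ge E_2$, the argument is short. Lemma~\ref{lem:tail of series} gives
\[
F(s,\bm{\pi}) = \frac{\eta_{n_F}}{n_F^{\,s}} + O_F\!\left((n_F+1)^{-\sigma}\right),
\]
and $\eta_{n_F}$ is nonzero by the very definition of $n_F$. The ratio of the error to the leading term is $O_F((1+1/n_F)^{-\sigma})$, which tends to $0$ as $\sigma \to \infty$. Choosing $E_2$ large enough (depending only on $F$) to force this ratio to be less than, say, $1/2$ then gives strict non-vanishing throughout $\sigma \ge E_2$.

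For $\sigma \le E_1$, the strategy is to re-parametrize via $s' = 1-s$, so that $\operatorname{Re}(s')$ becomes very large. Applying Lemma~\ref{lem:FE} with the roles of $\bm{\pi}$ and $\widetilde{\bm{\pi}}$ swapped gives an asymptotic for $F(1-s',\bm{\pi}) = F(s,\bm{\pi})$ whose main term is
\[
\pm \sum_{j\in J} c_j \prod_{u=1}^{V} L(1-s',\pi_u)^{\sum_{l=0}^{k_u} d_{u,l,j}} \prod_{l=0}^{k_u} B(s',l,\widetilde{\pi}_u)^{d_{u,l,j}},
\]
valid outside the $\varepsilon$-neighborhoods of the cosine singularities that appear when $L(1-s',\pi_u)$ is rewritten via the functional equation~\eqref{eq:FE for L(s,pi)}. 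Using that the Dirichlet series for $L(s',\widetilde{\pi}_u)$ tends to $1$ as $\operatorname{Re}(s')\to\infty$, and that the resulting Gamma, cosine, and $\mathfrak{q}_{\pi_u}^{s'}$ factors are each nonzero in the described region, every individual factor is nonvanishing, and the question reduces to whether the sum over $j\in J$ itself can vanish.

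Ruling out this internal cancellation is the main obstacle, and is exactly where assumption~\eqref{assump:assumption 1} enters. The plan is to factor out of each $j$-summand the common asymptotic growth across $j\in J$---the powers of $\mathfrak{q}$, $|s'|$, and $(2\pi e)$ dictated by the three weighted degrees that every $j\in J$ shares---so that what remains is dominated by a power of $\log s'$ whose leading coefficient is a nonzero scalar multiple of $\sum_{j \in J} c_j$, and therefore nonzero by~\eqref{assump:assumption 1}. Any sub-leading contribution is smaller by at least one factor of $1/\log s'$ or $e^{-\kappa s'}$ for some constant $\kappa > 0$, as controlled by the error term in Lemma~\ref{lem:FE}. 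Taking $\operatorname{Re}(s')$ sufficiently large---equivalently, $E_1$ sufficiently negative---then yields strict lower bounds on $|F(s,\bm{\pi})|$ and completes the proof.
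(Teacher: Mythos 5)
Your overall architecture matches the paper's: Lemma~\ref{lem:tail of series} gives the right half-plane zero-free region, and Lemma~\ref{lem:FE} gives the left. For $\sigma \ge E_2$, your argument is the same as the paper's. For $\sigma \le E_1$, you apply Lemma~\ref{lem:FE} with the roles of $\bm{\pi}$ and $\widetilde{\bm{\pi}}$ swapped and set $s' = 1-s$; the paper instead applies it to $F(s,\bm{\pi})$, shows $F(1-s, \widetilde{\bm{\pi}}) \neq 0$ for $\Real(s) \ge E_1'$, and passes back via $s \mapsto 1-\overline{s}$ together with complex conjugation. These are equivalent routes to the same endpoint.

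Where you diverge from the paper is in your explicit treatment of the possible cancellation within the sum over $j \in J$, which you rightly flag as the crux of the left-half argument. The paper dispatches it with the statement that ``since every factor of $A_1$ is nonzero for $s \in R$, $A_1 \neq 0$,'' which reads oddly once $|J|>1$, because $A_1$ is a \emph{sum} of several products and the non-vanishing of each factor in each summand does not by itself preclude cancellation. Your instinct to bring in assumption~\eqref{assump:assumption 1} here is therefore the right one, but the stated resolution --- factor out the common growth and claim the surviving leading coefficient is a scalar multiple of $\sum_{j\in J}c_j$ --- is not quite correct as written. After extracting the shared powers of $\mathfrak{q}_{\pi_u}$, $|s'|$ and $(2\pi e)$, the residual $j$-summands still carry $j$-dependent factors: the constants analogous to $Y_{j,F}$ and $Y'_{j,F}$ from the paper's proof of Theorem~\ref{thm:number of zeros}, and the bounded-but-oscillating products $\prod_u \big(L(s',\widetilde{\pi}_u)\,(s')^{-i\sum_r b_{u,r}}\big)^{\sum_{l}d_{u,l,j}}$. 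These do not collapse across $J$, so what survives is $\sum_{j\in J}c_j\cdot(\text{a }j\text{-dependent quantity})$, and assumption~\eqref{assump:assumption 1} alone does not make that sum nonzero. To close the argument you would need to either show these $j$-dependent quantities coincide (or tend to a common limit) on the region in question, or argue directly that the weighted combination cannot vanish there. Until you do, this step should be treated as open; note, however, that the paper's own proof is no more explicit on this point, so your proposal is at least as complete as what appears in the text.
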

\begin{proof}
    We first prove the existence of $E_1$. 
From \eqref{eq:FE-lemma} and \eqref{eq:asymp B}, we have $F(1-s, \widetilde{\bm{\pi}})$ equals
     \begin{multline}\label{eq:FE-lemma-horizon}
    	(-\log s)^{\deg'_{\mathrm{der}}(F(s,\bm{\pi}))}\Big[\sum_{j \in J'} c_j \Big(\prod_{u=1}^V  m_u^{\sum_{l=0}^{k_u}l d_{u,l, j}}\Big) \Big(\prod_{u=1}^VL(1-s,\widetilde{\pi}_u)^{\sum_{l =0}^{k_u}d_{u,l, j}} \Big)\Big](1+O_{F}(\tfrac{1}{\log s})).
    \end{multline}
     We denote the main term of $F(1-s, \widetilde{\bm{\pi}})$ in \eqref{eq:FE-lemma-horizon} by $A_1(s, \widetilde{\bm{\pi}})$ and the error term by $A_2(s, \widetilde{\bm{\pi}})$: 
    \begin{align}
    F(1-s, \widetilde{\bm{\pi}}) = A_1(s, \widetilde{\bm{\pi}}) + A_2(s, \widetilde{\bm{\pi}}).\label{eq:F(1-s,widetilde pi) decomposition}
    \end{align}
    By \eqref{eq:FE-lemma}, there exists some constant $\Cl[abcon]{FE} := \Cr{FE}(F)>0$ such that 
    \begin{equation}\label{eq:A1-A2}
        |A_2(s, \widetilde{\bm{\pi}})| < \Cr{FE} {|A_1(s, \widetilde{\bm{\pi}})|}/{|\log s|}
    \end{equation}
    for any $s \in R$ (the region defined in   Lemma \ref{lem:FE}) with $c > 3/2$.

   	We now show that $A_1(s, \widetilde{\bm{\pi}}) \neq 0$ for all $s \in  R$ with a  sufficiently large $\sigma$. From \eqref{eq:FE for L(s,pi)}, we observe that for each $j \in J'$, every component $L$-function factor in the monomial products of $A_1(s, \widetilde{\bm{\pi}})$ is absolutely convergent and nonzero for all $s \in R$. Furthermore, as established in the introduction, the pairwise distinct monomial products form a $\mathbb{C}$-linearly independent basis for the polynomial algebra $\mathcal{B}$. Since, for each $j \in J'$, the coefficient of each monomial product is nonzero, the only instance that $A_1(s, \widetilde{\bm{\pi}})$ could vanish is when the products of $L$-functions in \eqref{eq:FE-lemma-horizon} for each $j$ coincide and 
   	\[
   	\sum_{j \in J'} c_j \prod_{u=1}^V  m_u^{\sum_{l=0}^{k_u}l d_{u,l, j}} = 0.
   	\]
   	Our non-destruction assumption \eqref{assump:assumption 1} explicitly prevents such a destructive cancellation, ensuring $A_1(s, \widetilde{\bm{\pi}}) \neq 0$ for all $s \in R$ with a sufficiently large real part $\sigma$.
   	
    Thus, if $\sigma$ is sufficiently large, we have from \eqref{eq:A1-A2} and the fact that $A_1(s, \widetilde{\bm{\pi}}) \neq 0$  that $|A_2(s,\widetilde{\bm{\pi}})| < |A_1(s,\widetilde{\bm{\pi}})|$. Then there exists $E'_1 := E'_1(F,\varepsilon)$ such that $F(1-s,\widetilde{\bm{\pi}})$ is nonzero in 
    \[
    \{s \in \mathbb{C} : \sigma \ge E'_1 \} \cap R. 
    \]
By change of variables $s \mapsto 1-\overline{s}$ and complex conjugation, 
if we set $E_1 := \min\{1-E'_1, 1-c\}$, we may conclude that $F(s, \bm{\pi}) \neq 0$ in the region
\[
\{s \in \mathbb{C} : \sigma \le E_1\} \cap \bigcup_{u=1}^{V}\bigcup_{r=1}^{m_u}\bigcup_{n=-\infty}^{\infty}\{s \in \mathbb{C} :  |s+2n+ \mu_{\pi_u}(r)| \ge \varepsilon \}.
\]

To prove the existence of $E_2$, we consider \eqref{eq:tail of series} and denote the main term of $F(s,\bm{\pi})$ by $B_1(s,\bm{\pi})$ and its error term by $B_2(s,\bm{\pi})$. In particular, for large $\sigma > 1$, we have
\[
F(s,\bm{\pi}) =  B_1(s,\bm{\pi}) + B_2(s,\bm{\pi}).
\]
Then there exists some constant $\Cl[abcon]{tail} > 0$ such that $|B_2(s,\bm{\pi})| <  \Cr{tail} (n_F+1)^{-\sigma}.$
Then for sufficiently large, that is, for 
\[
\sigma \ge \max\Big\{1, \frac{\log \Cr{tail}- \log|\eta_{n_F}|}{\log\big(1+{1}/{n_F}\big)}\Big\} := E_2,
\]
we have 
\[
|B_2(s,\bm{\pi})| < \Cr{tail} (n_F+1)^{-\sigma} \le {|\eta_{n_F}|}{n_F^{-\sigma}} = |B_1(s,\bm{\pi})|.
\]
Since $B_1(s,\bm{\pi}) \neq 0$ by the definition of $n_F$, it follows that $F(s,\bm{\pi})\neq 0$ in the region  $\{s \in \mathbb{C} : \sigma \ge E_2\}$. The proposition then follows by combining the two zero-free regions.
\end{proof}

\begin{lem}\label{thm:zfr2}
 Let $F(s,\bm{\pi}) \in \mathcal{B}$ be defined as in \eqref{def:F} satisfying \eqref{assump:assumption 1}. For any $\varepsilon > 0$, there exists an integer $N \ge 1$, depending only on $F$ and $\varepsilon$, such that for all $n \ge N$, the function $F(s,\bm{\pi})$ has at most $\deg_{\dim}(F(s,\bm{\pi}))$ zeros (counted with multiplicity) in the region
    \[
    C_n :=  \bigcup_{u=1}^{V}\bigcup_{r=1}^{m_u}\left\{s \in \mathbb{C} : |s+2n+ \mu_{\pi_u}(r)| < \varepsilon   \right\}.
    \]

\end{lem}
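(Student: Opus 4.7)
The plan is to apply Rouch\'e's theorem on $\partial C_n$, with a comparison function derived from an analogue of Lemma~\ref{lem:FE}.

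First, I would adapt Lemma~\ref{lem:FE} to produce an asymptotic for $F(s,\bm{\pi})$ itself (rather than $F(1-s,\widetilde{\bm{\pi}})$). Since the proof of Lemma~\ref{lem:FE} is symmetric in $\bm{\pi}$ and $\widetilde{\bm{\pi}}$, interchanging them and substituting $s\mapsto 1-s$ yields
\[
F(s,\bm{\pi}) \;=\; M(s)\,\Bigl(1+O_F\Bigl(\tfrac{1}{\log|s|}+e^{-\Cr{conductor-dependence}|s|}\Bigr)\Bigr)
\]
for $\operatorname{Re}(s)$ sufficiently negative and $s$ away from the small disks about the zeros of the cosine factors in the functional equations, where
\[
M(s) \;=\; (-1)^{\deg_{\mathrm{der}}(F(s,\bm{\pi}))}\sum_{j\in J} c_j \prod_{u=1}^V L(s,\pi_u)^{e_{u,j}}\prod_{l=0}^{k_u} B(1-s,l,\widetilde{\pi}_u)^{d_{u,l,j}},
\]
with $e_{u,j}:=\sum_{l=0}^{k_u} d_{u,l,j}$, and the $B$-factor products are nonvanishing and only logarithmically varying throughout $C_n$ for $n$ large.

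I would then choose $N$ large enough that the error term above has absolute value strictly less than $1/2$ on $\partial C_n$ for every $n\ge N$; this is possible because $\varepsilon$ is fixed while $|\operatorname{Re}(s)|\to\infty$ on the shells. Rouch\'e's theorem then gives that $F(s,\bm{\pi})$ and $M(s)$ have the same number of zeros, counted with multiplicity, inside $C_n$.

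It remains to count the zeros of $M$ in $C_n$. Since the $B$-factor products are nonvanishing there, every zero of $M$ stems from the $L$-function factors. For a single monomial $\prod_u L(s,\pi_u)^{e_u}$, a direct count gives $\sum_u m_u e_u$ zeros in $C_n$, summing multiplicities if any of the centers $-2n-\mu_{\pi_u}(r)$ coincide. For the general sum I would induct on the rank-weighted degree, factoring out the common minimal power $\prod_u L(s,\pi_u)^{e_u^{\min}}$ with $e_u^{\min}:=\min_{j\in J} e_{u,j}$, so that $M(s) = \prod_u L(s,\pi_u)^{e_u^{\min}}\,\widetilde{M}(s)$. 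The factored-out part contributes exactly $\sum_u m_u e_u^{\min}$ zeros; the residual $\widetilde{M}$, of strictly smaller rank-weighted degree whenever some $e_u^{\min}>0$, is handled by the inductive hypothesis. The base case (all $e_u^{\min}=0$) uses the hypothesis $\sum_{j\in J}c_j\neq 0$ together with the nonvanishing of the $B$-factor products to show that $\widetilde{M}$ does not vanish at the trivial-zero centers, precluding any extra zeros inside $C_n$.

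The main obstacle lies in this final step when $|J|>1$ and the exponent vectors $(e_{u,j})_u$ differ across $J$. Handling it cleanly requires combining the three common-degree identities $\deg_{\mathrm{rk}}(F_j)=\deg_{\mathrm{rk}}(F)$, $\deg_{\mathrm{der}}(F_j)=\deg_{\mathrm{der}}(F)$, $\deg_{\mathrm{cond}}(F_j)=\deg_{\mathrm{cond}}(F)$ for $j\in J$ with the non-degeneracy assumption \eqref{assump:assumption 1}, so that the inductive reduction terminates with precisely $\deg_{\mathrm{rk}}(F(s,\bm{\pi}))$ zeros accounted for.
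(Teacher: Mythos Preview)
Your strategy coincides with the paper's: apply Rouch\'e on the boundary of the $\varepsilon$-disks to replace $F$ by the main term from Lemma~\ref{lem:FE}, then count the zeros of that main term. The paper, however, does not re-derive a version of Lemma~\ref{lem:FE} for $F(s,\bm\pi)$; it uses the lemma exactly as stated for $F(1-s,\widetilde{\bm\pi})$, performs the Rouch\'e comparison on the reflected disks $K_{n,u,r}=\{|s-(2n-1+\overline{\mu_{\pi_u}(r)})|<\varepsilon\}$, and only at the very end passes back to $C_n$ via $s\mapsto 1-\overline s$ and complex conjugation. Your symmetric adaptation is legitimate but the paper's route saves the re-derivation.

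Where you and the paper differ is in the zero count of the main term. The paper does no induction: it substitutes the functional equation \eqref{eq:FE for L(s,pi)} for each $L(1-s,\widetilde\pi_u)$ and observes that inside $K_n$ the factors $L(s,\pi_u)$, the Gamma factors, and the $B$-factors are all nonvanishing, so that any zero of $A_1$ is attributed directly to the cosine factors, which furnish $\deg_{\rk}(F)$ zeros in $K_n$. Your inductive reduction via the minimal exponents $e_u^{\min}=\min_{j\in J}e_{u,j}$ is therefore more elaborate than what the paper does, and it stalls precisely in the case you flag: when every $e_u^{\min}=0$ with $|J|>1$, nothing is factored out and the ``base case'' claim that $\widetilde M$ has no zeros in $C_n$ is not justified by the non-vanishing at the centers alone. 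The paper does not separately treat this case either---it simply asserts the cosine count---so the obstacle you identify is genuine, but it is not one the paper overcomes by a different technique.
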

\begin{proof}
For each $n \ge E'_1$ (as defined in Proposition \ref{prop:zfr1}) and for each $1 \le u \le V$ and $1 \le r \le m_u$, consider the disk
\[
K_{n,u,r} := \{s \in \mathbb{C} : |s-(2n-1+\overline{\mu_{\pi_u}(r)})| < \varepsilon\}.
\]
From \eqref{eq:F(1-s,widetilde pi) decomposition}, we have the decomposition $F(1-s, \widetilde{\bm{\pi}}) = A_1(s, \bm{\pi}) + A_2(s, \bm{\pi})$, where both $F(1-s, \widetilde{\bm{\pi}})$ and $A_1(s, \bm{\pi})$ are holomorphic in $K_{n,u,r}$ and satisfy that $|A_2(s, \bm{\pi})| < |A_1(s, \bm{\pi})|$ on the boundary. By Rouch\'e's Theorem, it follows that $F(1-s, \widetilde{\bm{\pi}})$ and $A_1(s, \bm{\pi})$ have the same number of zeros inside $K_{n,u,r}$, counted with multiplicity.

We now count the zeros of $A_1(s, \bm{\pi})$ inside $K_{n,u,r}$. Recall that $A_1(s, \bm{\pi})$ is the main term from \eqref{eq:FE-lemma}. Since $n \ge E'_1$ and $E'_1$ can be chosen sufficiently large, the region $K_{n,u,r}$ lies in a region where $\Real(s)$ is large. In this region, the factors $L(s,\pi_u)$, $B(s,l,\pi_u)$, and the associated Gamma factors are all nonzero. Therefore, from \eqref{eq:FE for L(s,pi)}, the zeros of each monomial product of $A_1(s, \bm{\pi})$ within $K_{n,u,r}$ can only arise from the product of cosine terms.
Specifically, only the term $\cos((s-\overline{\mu_{\pi_u}(r)})\pi/2)$ gives a single zero inside the disk $K_{n,u,r}$ at $s = 2n-1+\overline{\mu_{\pi_u}(r)}$. Therefore $A_1(s, \bm{\pi})$ has at most $\deg_{\dim}(F(s,\bm{\pi}))$ zeros in the region 
\[
K_n := \bigcup_{u=1}^{V}\bigcup_{r=1}^{m_u}K_{n,u,r}.
\]

From the steps above, $F(1-s, \widetilde{\bm{\pi}})$ also has at most $\deg_{\dim}(F(s,\bm{\pi}))$ zeros in $K_{n}$. By change of variables $s \mapsto 1-\overline{s}$ and complex conjugation, the zeros of $F(1-s, \widetilde{\bm{\pi}})$ inside $K_{n}$ are mapped to the zeros of $F(s, \bm{\pi})$ in the region $C_{n}$, the lemma follows by choosing an integer $N$ sufficiently large (e.g., $N \ge E'_1+1$).
\end{proof}


\section{An asymptotic zero-counting formula}\label{sec:asymp-zero}
This section applies the preceding results to complete the proof of Theorem \ref{thm:number of zeros}. We first establish more analytic properties of $F(s,\bm{\pi})$, showing that the function is of order 1, and, once completed to an entire function, admits a Hadamard product factorization.
\begin{lem}\label{lem:hadamard}
Let $F(s,\bm{\pi}) \in \mathcal{B}$ be defined as in \eqref{def:F} satisfying \eqref{assump:assumption 1}. Let $z_F$  be the order of zeros at $s=0$ of $F(s,\bm{\pi})$. Then there exists a complex number $B_F$ and an integer $p_F \ge 0$ such that 
\begin{align}\label{eq:log-derivative}
    \frac{F'(s,\bm{\pi})}{F(s,\bm{\pi})} = \frac{p_F}{1-s}+\frac{z_F}{s}+ B_F + \sum_{\rho_F \neq 0} \Big( \frac{1}{s-\rho_F} + \frac{1}{\rho_F} \Big),
\end{align}
where $\rho_F$ runs over all zeros of $F(s,\bm{\pi})$, counted with multiplicity.
\end{lem}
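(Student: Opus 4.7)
The plan is to show that $F(s,\bm{\pi})$ is meromorphic on $\mathbb{C}$ with its only possible pole at $s=1$, clear that pole to obtain an entire function of order one, and then apply the Hadamard factorization theorem and differentiate logarithmically.

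First I would note that each $L(s,\pi_u)$ is meromorphic on $\mathbb{C}$ with at most a simple pole at $s=1$ (occurring only when $L(s,\pi_u)=\zeta(s)$, cf.\ Section~\ref{sec:L-functions}); consequently each derivative $L^{(l)}(s,\pi_u)$ has at worst a pole of order $l+1$ at $s=1$ and is holomorphic elsewhere. Being a polynomial in these derivatives, $F(s,\bm{\pi})$ is meromorphic on $\mathbb{C}$ with a single possible pole at $s=1$, of some finite order $p_F\ge 0$. The function
\[
G(s) := (1-s)^{p_F}\,F(s,\bm{\pi})
\]
is therefore entire, and since $(1-s)^{p_F}$ does not vanish at $s=0$, $G$ has the same order of vanishing at $s=0$ as $F(s,\bm{\pi})$, namely $z_F$.

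Next I would verify that $G$ has order at most one. Each completed $L$-function $\Lambda(s,\pi_u)$ is entire of order $1$ (Section~\ref{sec:L-functions}), so each $L(s,\pi_u)$ is meromorphic of order $1$; differentiation preserves the order, and finite sums and products of meromorphic functions of order $\le 1$ remain of order $\le 1$. Hence $G$ is entire of order at most one. A more hands-on derivation of the same bound interpolates Lemma~\ref{lem:tail of series} on a right half-plane with the Stirling asymptotic \eqref{eq:L(1-s,pi) asymp} fed through Lemma~\ref{lem:FE} on a symmetric left half-plane, using Phragm\'en--Lindel\"of to cover the intermediate vertical strip. Applying the Hadamard factorization theorem to $G$ then produces constants $A_F,B_F\in\mathbb{C}$ such that
\[
G(s) = s^{z_F}\,e^{A_F + B_F s} \prod_{\rho_F \neq 0}\Bigl(1-\frac{s}{\rho_F}\Bigr)\,e^{s/\rho_F},
\]
where the product ranges over the nonzero zeros $\rho_F$ of $F(s,\bm{\pi})$ counted with multiplicity and converges by the order bound.

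Finally, taking logarithmic derivatives of the identity $F(s,\bm{\pi}) = (1-s)^{-p_F}\,G(s)$ gives
\[
\frac{F'(s,\bm{\pi})}{F(s,\bm{\pi})} = \frac{p_F}{1-s} + \frac{G'(s)}{G(s)} = \frac{p_F}{1-s} + \frac{z_F}{s} + B_F + \sum_{\rho_F \neq 0}\Bigl(\frac{1}{s-\rho_F}+\frac{1}{\rho_F}\Bigr),
\]
which is precisely \eqref{eq:log-derivative}. The main obstacle is the order-one bound on $G$: while it follows abstractly from the order of the completed automorphic $L$-functions, a self-contained verification requires the Phragm\'en--Lindel\"of interpolation through the critical strip, which in turn rests on convexity-type estimates for derivatives of the component $L(s,\pi_u)$ that follow from the standard analytic properties recalled in Section~\ref{sec:L-functions}.
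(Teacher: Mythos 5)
Your proposal is correct and follows essentially the same path as the paper: identify $s=1$ as the only possible pole, multiply by $(1-s)^{p_F}$ (the paper uses $(s-1)^{p_F}$, an immaterial difference absorbed into $e^{A_F}$) to get an entire function, verify order one, invoke Hadamard factorization, and take the logarithmic derivative. The only stylistic difference is that where you cite "differentiation preserves order" as a known fact, the paper spells this out via a Cauchy integral estimate on a unit circle to show $\sup_{|z-s|=1}|L(z,\pi_u)|$ controls $|L'(s,\pi_u)|$, after first establishing the convexity bound $L(s,\pi_u)\ll_\varepsilon \mathfrak{C}(s,\pi_u)^{\Omega(\sigma)+\varepsilon}$ by the Phragm\'en--Lindel\"of interpolation you describe as the "hands-on" alternative.
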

\begin{proof} 
Write $s=\sigma+it$. For each $1\le u \le V$, we first estimate the growth of  $L(s,\pi_u)$ for when $|t|$ is large and $\sigma$ is fixed in the following ranges:
\begin{enumerate}[leftmargin=0.3in]
    \item If $\sigma \ge 1$, then $L(\sigma +it, \pi_u) \ll 1$.
    \item Suppose that $\sigma \le 0$. By the functional equation of $L(s,\pi_u)$ and Stirling's formula, 
    \begin{equation}\label{eq:growth-L-<0}
   \begin{aligned}
       L(1-s, \pi_u)   
       &\ll_{\pi_u}  \mathfrak{q}_{\pi_u}^{\sigma-1/2} 
       L(s, \widetilde{\pi}_u) |s|^{m_u\sigma-m_u/2}.
   \end{aligned} 
\end{equation}
    Recalling the definition of analytic conductor in \eqref{eqn:analytic_conductor_def for Lu(s)} and taking $s$ such \mbox{that $\Real(s)\ge1$} in \eqref{eq:growth-L-<0}, we have for any $s$ with $\Real(s) \le 0$ and any large $t$
    \[
    L(s, {\pi}_u) \ll_{\pi_u} \mathfrak{C}(s, \pi_u)^{1/2-\sigma}. 
    \]
    \item When $0 < \sigma < 1$, we use upper bounds for $L(-\varepsilon+it, \pi_u)$ and $L(1+\varepsilon+it, \pi_u)$ from the above approximations, then apply the Phragm\'en-Lindel\"of principle.  It follows that
\begin{equation} \label{eq:1.25}
L(s, \pi_u) \ll_{\pi_u,\varepsilon} \mathfrak{C}(s, \pi_u)^{(1-\sigma)/2+\varepsilon}.
\end{equation}
\end{enumerate}
In conclusion, if we define 
\begin{equation}\label{def:mu}
\Omega(\sigma) = 
\begin{cases}
    0 & (\sigma \ge 1); \\
    {1/2-\sigma/2} & (0 < \sigma < 1); \\
    {1/2-\sigma} & (\sigma \le 0),
\end{cases}
\end{equation}
then $L(s,\pi_u) \ll_{\pi_u,\varepsilon}\mathfrak{C}(s, \pi_u)^{\Omega(\sigma)+\varepsilon}$ for all $s \in \mathbb{C}$. By the definition of $\mathfrak{C}(s, \pi_u)$, the function $L(s,\pi_u)$ is a function of order 1.

We now verify that $F(s,\bm{\pi})$ is also a function of order $1$. 
It is immediate that if $f(s)$ and $g(s)$ are functions of order $1$, then both $f(s)+g(s)$ and $f(s)g(s)$ are of order $1$. Recall that $F(s,\bm{\pi})$ is a linear combination of derivatives of $L(s,\pi_u)$ where $1 \le u \le V$. Therefore, it is enough to show that all derivatives of $L(s,\pi_u)$ are functions of order $1$. In fact, by induction, it suffices to  prove that $L'(s,\pi_u)$ is of order $1$. 

Let $C$ be the circle of radius 1 centered at $s$. For sufficiently large $t$, the function $L(z,\pi_u)$ is holomorphic for all $z$ inside and on $C$. Therefore,
\[
L(s,\pi_u) = \frac{1}{2\pi i} \int_C \frac{L(z,\pi_u)}{z-s} \, dz.
\]
Differentiating under the integral yields
\[
L'(s,\pi_u)  = \frac{1}{2\pi i} \int_C \frac{L(z,\pi_u)}{(z-s)^2} \, dz.
\]
For $s$ sufficiently large,
\begin{align}\label{eq:comparing}
|L'(s,\pi_u)| 
&= \left| \frac{1}{2\pi i} \int_C \frac{L(z,\pi_u)}{(z-s)^2} \, dz \right| 
\le \frac{1}{2\pi} \sup_{z \in C} |L(z,\pi_u)| \int_C \frac{|dz|}{|z-s|^2}  = \sup_{z \in C} |L(z,\pi_u)|.
\end{align}
This means $L'(s,\pi_u)$ has the same growth rate as $L(s,\pi_u)$;  particularly, it is also of order 1. 

Therefore, $F(s,\boldsymbol{\pi})$ is holomorphic on $\mathbb{C}\setminus\{1\}$, with a possible pole at $s=1$ only if there is $\pi_u$ that is the trivial representation of $\mathfrak{F}_1$. Let $p_F$ be the order of this pole. It follows that the function $(s-1)^{p_F} F(s,\boldsymbol{\pi})$ is entire and of order 1, thus admitting a Hadamard product factorization. Specifically, there exist constants $A_F, B_F \in \mathbb{C}$ such that
\begin{equation}\label{eq:hadamard}
    (s-1)^{p_F} F(s,\boldsymbol{\pi}) = s^{z_F} e^{A_F + B_F s}
    \prod_{\rho_F \neq 0} \left( 1 - \frac{s}{\rho_F} \right) e^{s / \rho_F},
\end{equation}
where $z_F$ is the order of the zero of $F(s,\boldsymbol{\pi})$ at $s=0$, and the product runs over all nonzero zeros $\rho_F$, counted with multiplicity. The desired claim is then obtained by taking the logarithmic derivative of \eqref{eq:hadamard}.
\end{proof}

The following lemma provides a bound on the number of zeros in short vertical intervals, which is a key input for the final counting argument.
\begin{lem}\label{lem:local-zeros}
Let $F(s,\bm{\pi}) \in \mathcal{B}$ be defined as in \eqref{def:F} satisfying \eqref{assump:assumption 1}. Then for any large $T$, 
\[
N_{F(s,\bm{\pi})}(T, T+1) \ll_F \log T.
\]
\end{lem}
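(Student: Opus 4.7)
The plan is to reduce the count to zeros inside a disk and then apply Jensen's formula. By Proposition \ref{prop:zfr1}, all nontrivial zeros of $F(s,\bm{\pi})$ lie in the vertical strip $E_1 < \Real(s) < E_2$; the trivial zeros are confined to disks centered at fixed points $-2n-\mu_{\pi_u}(r)$, whose imaginary parts stay bounded as $n$ varies, so they contribute nothing to $N_{F(s,\bm{\pi})}(T,T+1)$ once $T$ is large. Hence it suffices to bound the total number of zeros of $F(s,\bm{\pi})$ lying in the rectangle $(E_1,E_2)\times (T,T+1)$.

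Fix $\sigma_0 > E_2$ sufficiently large that Lemma \ref{lem:tail of series} yields $|F(\sigma_0+it,\bm{\pi})| \ge |\eta_{n_F}|/(2n_F^{\sigma_0})$ uniformly in $t$, so in particular $\log|F(\sigma_0+it,\bm{\pi})| = O_F(1)$. Set $s_0 := \sigma_0 + i(T+\tfrac12)$ and $R := \sigma_0 - E_1 + 1$, so that the rectangle above is contained in the open disk $D(s_0,R)$. For $T$ sufficiently large, $F(\cdot,\bm{\pi})$ is holomorphic on $D(s_0,2R)$, since its only possible pole (at $s=1$) is far away.

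The key analytic input is a polynomial-in-$T$ bound for $|F(s,\bm{\pi})|$ on $D(s_0,2R)$. This follows directly from the convexity-type estimate $|L(s,\pi_u)| \ll_\varepsilon \mathfrak{C}(s,\pi_u)^{\Omega(\Real s)+\varepsilon}$ established inside the proof of Lemma \ref{lem:hadamard}, combined with the Cauchy integral argument used there to transfer polynomial growth to every derivative $L^{(l)}(s,\pi_u)$. Taking sums and products over the finitely many terms in \eqref{def:F} gives a constant $C_F > 0$ with $|F(s,\bm{\pi})| \ll_F T^{C_F}$ uniformly for $s \in D(s_0,2R)$. This is the only step with substantive content, but it is essentially just a localization of the order-$1$ argument already carried out in Lemma \ref{lem:hadamard}, so no new ideas are needed.

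Let $n(r)$ count the zeros of $F(s,\bm{\pi})$ in $D(s_0,r)$ with multiplicity. Since $F(s_0,\bm{\pi}) \ne 0$, Jensen's formula on $D(s_0,2R)$ together with the monotonicity of $n$ gives
\begin{align*}
n(R)\log 2 \;\le\; \int_R^{2R} \frac{n(r)}{r}\,dr \;\le\; \frac{1}{2\pi}\int_0^{2\pi} \log|F(s_0+2Re^{i\theta},\bm{\pi})|\,d\theta \;-\; \log|F(s_0,\bm{\pi})|.
\end{align*}
By the two estimates above, the right-hand side is $\ll_F \log T$, and hence $N_{F(s,\bm{\pi})}(T,T+1) \le n(R) \ll_F \log T$, which is the desired bound.
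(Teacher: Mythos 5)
Your proof is correct and follows essentially the same approach as the paper: establish a lower bound for $|F|$ at a point far to the right via Lemma~\ref{lem:tail of series}, establish a $\log T$-type upper bound for $\log|F|$ on a surrounding disk via the convexity estimate in Lemma~\ref{lem:hadamard}, apply Jensen's formula, and then observe that the rectangle $(E_1,E_2)\times(T,T+1)$ sits inside the relevant disk. The only differences from the paper's proof are cosmetic choices (you center the disk at height $T+\tfrac12$ rather than $T$, and you use the standard $n(R)\log 2\le\int_R^{2R}n(r)/r\,dr$ device rather than isolating $\int_{D-E_1+1}^{D-E_1+2}$).
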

\begin{proof}  We first establish lower and upper bounds for $F(s, \bm{\pi})$. Write $s = \sigma+it$. By Lemma \ref{lem:tail of series}, we have
\[
F(s,\bm{\pi}) - \frac{\eta_{n_F}}{n_F^s} = O_F\left((n_F+1)^{-\sigma}\right).
\]
From Proposition \ref{prop:zfr1}, we know that all the nontrivial zeros of $F(s, \bm{\pi})$ are confined to the strip $E_1 \le \sigma \le E_2$. Then there exists a constant $D \ge E_{2}$ such that 
\[
\Big|F(s,\bm{\pi}) - \frac{\eta_{n_F}}{n_F^s}\Big| \le \frac{1}{2}\Big|\frac{\eta_{n_F}}{n_F^s}\Big|
\]
holds for $\sigma \ge D$. By the triangle inequality, we have for $\sigma \ge D$
\begin{equation}\label{eq:F-triangle}
|F(s,\bm{\pi})| \ge \frac{1}{2}\Big|\frac{\eta_{n_F}}{n_F^s}\Big|.
\end{equation}

From the proof of Lemma \ref{lem:hadamard}, all derivatives of $L(s,\pi_u)$ can also be bounded above in the same manner as $L(s,\pi_u)$. In other words, we have for any integers $l \ge 0$, $L^{(l)}(s,\pi_u) \ll_{\pi_u,\varepsilon} \mathfrak{C}(s, \pi_u)^{\Omega(\sigma)+\varepsilon}$, where $\Omega(\sigma)$ is defined in \eqref{def:mu}. Consequently, for $|t| \to \infty$ and $\sigma$ confined to a vertical strip depending solely on $F$, we obtain
\begin{equation}\label{eq:F-growth}
    \log F(s, \bm{\pi}) \ll_F \log |t|. 
\end{equation}

Now, let $n(r)$ be the number of zeros of $F(s,  \bm{\pi})$ counted with multiplicity in the circle with center $D+iT$ and radius $R$.
By Jensen's theorem, we have
\begin{align}\label{eq:Jensen}
   \int_0^{D-E_{1}+2} \frac{n(R)}{R} dR &= \frac{1}{2\pi} \int_0^{2\pi} \log\left|F\left(D+iT+(D-E_{1}+2)e^{i\theta}, \bm{\pi}\right)\right| d\theta\notag\\
   &\quad- \log|F(D+iT, \bm{\pi})|. 
\end{align}
From \eqref{eq:F-triangle}, we have  $|F(D+iT, \bm{\pi})| \ge \frac{1}{2}{|\eta_{n_F}|}/{n_F^D}$. Then there exists a constant $d_1$ depending on $F$ and $D$ such that
\begin{equation}\label{eq:lower-F(D)}
\log|F(D+iT, \bm{\pi})| \ge d_1.
\end{equation}
From \eqref{eq:F-growth}, there exists a constant $d_2 > 0$ depending on $F$ and $D$ such that
\begin{equation}\label{eq:upper-F(D)}
    \log\left|F\left(D+iT+(D-E_{1}+2)e^{i\theta}, \bm{\pi}\right)\right| \le d_2\log T. 
\end{equation}
Putting together \eqref{eq:Jensen},\eqref{eq:lower-F(D)},  \eqref{eq:upper-F(D)},  and the fact that $\int_0^{D-E_{1}+2} (n(R)/R)dR \ge 0$, we have
\begin{equation}\label{eq:Jensen1}
\int_0^{D-E_{1}+2} \frac{n(R)}{R} dR 
\ll_F \log T.
\end{equation}

Lastly, we observe that
\begin{equation}\label{eq:Jensen2}
\int_0^{D-E_{1}+2} \frac{n(R)}{R} dR \ge \int_{D-E_{1}+1}^{D-E_{1}+2} \frac{n(R)}{R} dR 
\gg_F n(D-E_{2}+1).
\end{equation}
Since $D \ge E_1$, the circle center at $D+iT$ with radius $D-E_1+1$ encompasses the rectangle with vertices at $E_1+iT, E_1+i(T+1), E_2+iT,$ and $E_2+i(T+1)$ (see Figure \ref{fig:local-zero}).  
Thus
\[
N_{F}(T, T+1) \le n(D-E_{1}+1), 
\]
which, together with \eqref{eq:Jensen1} and \eqref{eq:Jensen2}, completes this lemma.
\begin{figure}
    \centering
    \includegraphics[width=0.5\textwidth]{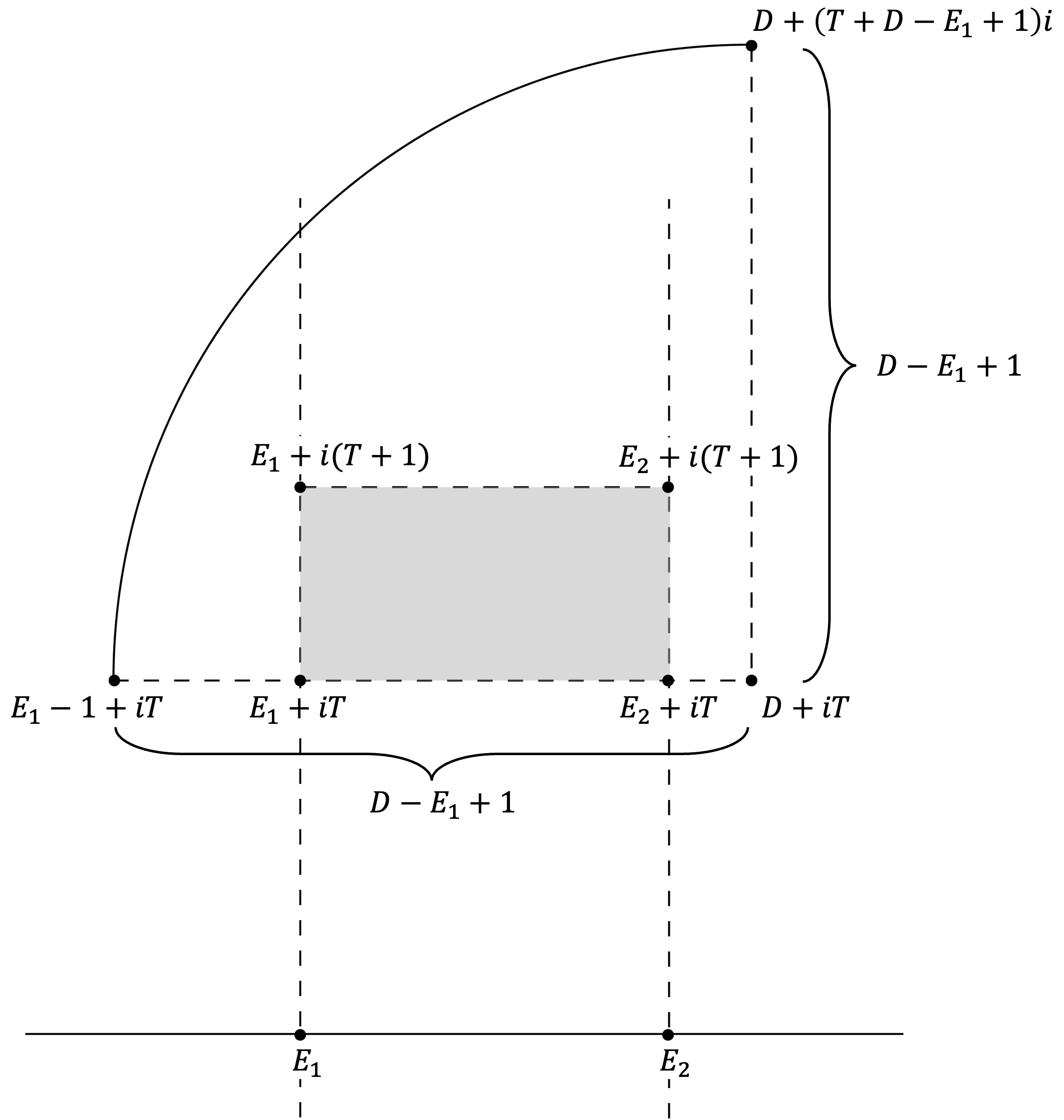}
    \caption{The rectangular contour and the encompassing quarter-circular arc}
    \label{fig:local-zero}
\end{figure}
\end{proof}

Finally, we derive an approximation for the logarithmic derivative of $F(s,\bm{\pi})$ in terms of its zeros near height $t$.
\begin{lem}\label{lem:log-derivative}
Let $F(s,\bm{\pi})\in \mathcal{B}$ be defined as in \eqref{def:F} satisfying \eqref{assump:assumption 1}. Let $\sigma_1 < \sigma_2$ be fixed real numbers and $\rho_F = \beta_F+i\gamma_F$ denote the zeros of $F(s,\bm{\pi})$. Then for any $s = \sigma+it$ such that $\sigma_1 < \sigma < \sigma_2$ and for any large $t$, we have
\[
\frac{F'(s,\bm{\pi})}{F(s,\bm{\pi})} = \sum_{|t - \gamma_F| < 1} \frac{1}{s-\rho_F} + O_F(\log t).
\]
\end{lem}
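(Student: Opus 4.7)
The plan is to start from the Hadamard product representation of $F'/F$ given by Lemma \ref{lem:hadamard} and compare its value at a general point $s = \sigma + it$ with $\sigma_1 < \sigma < \sigma_2$ to its value at a reference point $s_0 := \sigma_0 + it$, where $\sigma_0$ is chosen sufficiently large (for instance $\sigma_0 \ge E_2 + 1$, with $E_2$ as in Proposition \ref{prop:zfr1}) so that $F(s,\bm{\pi})$ and its derivatives are well controlled there. Subtracting the two expressions obtained from \eqref{eq:log-derivative} cancels the constants $B_F$ and all the $1/\rho_F$ terms, yielding
\[
\frac{F'(s,\bm{\pi})}{F(s,\bm{\pi})} = \frac{F'(s_0,\bm{\pi})}{F(s_0,\bm{\pi})} + \frac{p_F}{1-s} - \frac{p_F}{1-s_0} + \frac{z_F}{s} - \frac{z_F}{s_0} + \sum_{\rho_F \neq 0}\left(\frac{1}{s-\rho_F} - \frac{1}{s_0-\rho_F}\right).
\]

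For the reference term, Lemma \ref{lem:tail of series} gives $|F(s_0,\bm{\pi})| \gg_F 1$, and the analogous expansion for $F'(s_0,\bm{\pi})$ yields $F'(s_0,\bm{\pi}) = O_F(1)$, so $F'(s_0,\bm{\pi})/F(s_0,\bm{\pi}) = O_F(1)$. The pole and zero-at-origin terms contribute $O_F(1/t)$ since $\sigma$ is bounded and $t$ is large. It remains to control the sum over zeros, which I would split according to whether $|t - \gamma_F| < 1$ or $|t - \gamma_F| \ge 1$.

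For the nearby zeros, each term $1/(s_0 - \rho_F)$ is $O(1)$ because $|s_0 - \rho_F| \ge \sigma_0 - E_2 \gg 1$, while the terms $1/(s-\rho_F)$ are precisely those I want to retain. By Lemma \ref{lem:local-zeros}, the number of zeros with $|t - \gamma_F| < 1$ is $O_F(\log t)$, so this range contributes the retained sum plus $O_F(\log t)$. For the distant zeros, I combine the two fractions into
\[
\left|\frac{1}{s-\rho_F} - \frac{1}{s_0-\rho_F}\right| = \frac{|\sigma_0 - \sigma|}{|s-\rho_F|\,|s_0-\rho_F|} \ll_F \frac{1}{|t-\gamma_F|^2},
\]
using that $|s - \rho_F|, |s_0 - \rho_F| \ge |t - \gamma_F| \ge 1$. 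Applying Lemma \ref{lem:local-zeros} dyadically to count zeros with $k \le |t - \gamma_F| < k+1$ (noting that trivial zeros have bounded imaginary parts and so contribute only finitely often for large $t$), I obtain
\[
\sum_{|t - \gamma_F| \ge 1} \frac{1}{|t - \gamma_F|^2} \ll_F \sum_{k \ge 1} \frac{\log(t + k)}{k^2} \ll_F \log t,
\]
splitting the sum at $k = t$ and handling the two ranges separately.

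The main technical obstacle is ensuring the dyadic zero count is uniform in the height: Lemma \ref{lem:local-zeros} is stated for large $T$, so I need to verify that the tail of the sum (zeros at much larger heights than $t$) does not cause trouble and that the contribution from zeros at small heights is absorbed into the $O_F(\log t)$ term. Once this bookkeeping is done, combining all pieces yields the claimed identity.
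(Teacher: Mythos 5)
Your proposal follows essentially the same approach as the paper's proof: subtract the Hadamard expansion from Lemma~\ref{lem:hadamard} evaluated at a reference point with large fixed real part (your $s_0 = \sigma_0 + it$, the paper's $E + it$), bound the logarithmic derivative at that reference point using Lemma~\ref{lem:tail of series} and a Cauchy-type estimate, split the resulting zero sum at $|t-\gamma_F|=1$, and control each range via Lemma~\ref{lem:local-zeros}. The decomposition, the $|\sigma_0-\sigma|/(|s-\rho_F|\,|s_0-\rho_F|) \ll 1/|t-\gamma_F|^2$ estimate, and the interval-by-interval counting are all the same as in the paper. One small caution about your parenthetical on trivial zeros: they do not ``contribute only finitely often'' --- there are infinitely many, all with bounded imaginary part, so for large $t$ they all fall into the far range with $|t-\gamma_F|\asymp t$, and the $1/|t-\gamma_F|^2$ bound alone would not suffice to sum them. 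What saves the argument is that the trivial zeros satisfy $|\beta_F|\to\infty$, so $|s-\rho_F|,|s_0-\rho_F|\gg|\beta_F|$ and their total contribution is $\ll\sum_n n^{-2}=O(1)$. The paper is similarly terse on this point, so this is a shared imprecision rather than a gap unique to your write-up, but it is worth making explicit.
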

\begin{proof} 
For sufficiently large $E > E_2$ (as defined in Proposition \ref{prop:zfr1}), similarly to the proof of \eqref{eq:F-triangle}, we have 
\[
|F(E+it, \bm{\pi})| \ge \frac{1}{2} \frac{|\eta_{n_F}|}{n_F^E}.
\]
Similarly, we also have $|F(E+it,\bm{\pi})| \le 2{|\eta_{n_F}|}/{n_F^E}$ which, upon using \eqref{eq:comparing}, implies 
\[
|F'(E+it, \bm{\pi})| \le 2 \frac{|\eta_{n_F}|}{n_F^E}.
\]
Putting together, we have 
\begin{equation}\label{eq:F'/F-bounded}
    \frac{F'(E+it, \bm{\pi})}{F(E+it, \bm{\pi})} \ll_F 1.
\end{equation}

Now taking $s= E+it$ in \eqref{eq:log-derivative} from Lemma \ref{lem:hadamard}, we obtain
\begin{equation}\label{eq:log-derivative-E}
    \frac{F'(E+it, \bm{\pi})}{F(E+it, \bm{\pi})}  = \frac{p_F}{1-E-it}+\frac{z_F}{E+it} + B_F + \sum_{\rho_F \neq 0} \Big( \frac{1}{E+it-\rho_F} + \frac{1}{\rho_F} \Big).
\end{equation}
Subtracting \eqref{eq:log-derivative-E} from \eqref{eq:log-derivative} and using \eqref{eq:F'/F-bounded}, we obtain for $\sigma_1 < \sigma < \sigma_2$ and  large $t$,  
\begin{equation}\label{eq:log-derivative-difference}
\begin{aligned}
    \frac{F'(s, \bm{\pi})}{F(s, \bm{\pi})} 
    &= \Big( \sum_{|t-\gamma_F|<1} + \sum_{|t-\gamma_F|\ge 1} \Big) \left(\frac{1}{s-\rho_F} + \frac{1}{E+it-\rho_F}\right) + O_F(1).
\end{aligned}
\end{equation}
 For the terms with $|t-\gamma_F| \ge 1$, we have
\[
\sum_{|t-\gamma_F|\ge 1} \left| \frac{1}{s-\rho_F} - \frac{1}{E+it-\rho_F} \right| = \sum_{|t-\gamma_F|\ge 1} \frac{E-\sigma}{|(s-\rho_F)(E+it-\rho_F)|}\ll_F \sum_{|t-\gamma_F|\ge 1}\frac{1}{|t-\gamma_F|^2}.
\]
To estimate the quantity on the right side, we split the range of $\gamma_F$ into $\gamma_F > t+1$ and $\gamma_F < t-1$ and divide each range into intervals of length 1. We provide details for only when $\gamma_F > t+1$ as the other case can be done similarly. From Lemma \ref{lem:local-zeros}, we have
\begin{equation*}
    \begin{aligned}
        \sum_{\gamma_F \ge t+1} \frac{1}{|t-\gamma_F|^2} &= \sum_{n<t} \sum_{\gamma_F  =t+n}^{t+n+1}\frac{1}{|t-\gamma_F|^2} +\sum_{n > t}\sum_{\gamma_F  =t+n}^{t+n+1}\frac{1}{|t-\gamma_F|^2}\\
        &\ll_F \sum_{n<t} \frac{\log(t+n)}{(n+1)^2} +  \sum_{n>t} \frac{\log(t+n)}{(n+1)^2},
    \end{aligned}
\end{equation*}
which is $O_F(\log t)$. Therefore, we conclude that 
\begin{equation}\label{eq:outer-range-2}
\sum_{|t-\gamma_F|\ge 1} \left(\frac{1}{s-\rho_F} + \frac{1}{E+it-\rho_F}\right) \ll_F \log t.
\end{equation}
On the other hand, by Lemma \ref{lem:local-zeros}, we  have that 
\begin{equation}\label{eq:inner-range}
    \sum_{|t-\gamma_F|<1} \frac{1}{E+it-\rho_F} \ll_F \sum_{|t-\gamma_F|<1} 1 \ll_F \log t.
\end{equation}
The lemma then follows once we put together \eqref{eq:outer-range-2} and \eqref{eq:inner-range} in \eqref{eq:log-derivative-difference}.
\end{proof}

With all the preceding lemmas and propositions, we now prove our main result.
\subsection*{Proof of Theorem \ref{thm:number of zeros}}
Recall the definition of $E_1$ and $E_2$ from Proposition \ref{prop:zfr1}. Let $E'_{1} \le E_{1}$ and $E'_{2} \ge E_{2}$ be constants chosen appropriately small and large, respectively, as will be determined in the proof.
By Lemma \ref{thm:zfr2}, $F(s,\bm{\pi})$ has only finitely many trivial zeros with real parts in $[E'_1, E'_2]$ and imaginary parts in $[0, T]$. Moreover, Lemma \ref{lem:hadamard} implies that $F(s,\bm{\pi})$ has a pole of finite order at $s=1$. By Cauchy's theorem, we have
\begin{align*}
    N_{F(s, \bm{\pi})}(0,T) &= \frac{1}{2\pi}  \Imag\Big(\int_{E'_{1}}^{E'_{2}} + \int_{E'_{2}}^{E'_{2}+iT} + \int_{E'_{2}+iT}^{E'_{1}+iT} + \int_{E'_{1}+iT}^{E'_{1}}\Big)\frac{F'(s, \bm{\pi})}{F(s, \bm{\pi})} ds +O_F(1)\\
    &=: \frac{1}{2\pi} (J_1 + J_2 + J_3 + J_4) +O_F(1).
\end{align*}

The first integral, $J_1$, is along a horizontal line segment of fixed length, and its value is independent of $T$. Therefore, we have $J_1=O_F(1)$.

To estimate $J_2$, we substitute $\sigma = E'_2$ in Lemma \ref{lem:tail of series} and obtain
\begin{align*}
    \log F(s,\bm{\pi}) 
    = \log\left(\frac{\eta_{n_F}}{n_F^s}\right) + \log\left(1 + O_F\left(\left(\frac{n_F}{n_F+1}\right)^{\sigma}\right)\right) = \log\left(\frac{\eta_{n_F}}{n_F^s}\right) +  O_F(1).
\end{align*}
Thus the argument of $F(s,\bm{\pi})$ is
\[
\arg(F(s, \bm{\pi})) = \arg(\eta_{n_F}) -  t\log({n_F}) + O_F(1), 
\]
and hence 
\begin{align*}
    J_2 = [\arg(F(s, \bm{\pi}))]_{s=E'_{2}}^{s=E'_{2}+iT} = \left[ \arg(\eta_{n_F}) - t\log n_F + O_F(1) \right]_{t=0}^{t=T} 
    = -T\log n_F + O_F(1).
\end{align*}

Next, we estimate $J_3$. Applying the approximation from Lemma \ref{lem:log-derivative}, we have
\begin{equation*}
    J_3 = \Imag \Big(\sum_{|T-\gamma_F|<1} \int_{E'_{2}+iT}^{E'_1+iT} \frac{1}{s-\rho_F} ds\Big) + O_F\left(\log T \right).
\end{equation*}
For each integral, we change the path of integration. If $\gamma_F < T$, then we replace the path with the upper semicircle centered at $\rho_F$ and radius 1. If $\gamma_F > T$, then we replace the path with the lower semicircle centered at $\rho_F$ and radius 1. Then all integrals are bounded by $O(1)$. Therefore, by Lemma \ref{lem:local-zeros}, we have
\[
J_3 = \sum_{|\gamma_F-T|<1} O_F(1) + O_F\left(\log T\right) = O_F\left(\log T\right).
\]

Finally, we estimate $J_4$. Since the endpoint $\arg(F(E'_{1}, \bm{\pi}))$ is independent of $T$, we have
\[
J_4 = [\arg(F(s, \bm{\pi}))]_{E'_{1}+iT}^{E'_{1}} = -\arg(F(E'_{1}+iT, \bm{\pi})) + O_F(1).
\]
By applying \eqref{eq:FE-lemma-2} of Lemma \ref{lem:FE}, it follows that for $s = \sigma+it$, $\arg(F(1-s,\bm{\pi}))$ equals
\begin{align}
 \arg\Big[ \sum_{j\in J} c_j\prod_{u=1}^V\Big(L(1-s,{\pi}_u)^{\sum_{l=0}^{ k_u}d_{u,l, j}} \prod_{l=0}^{k_u} B(s,l,\widetilde{\pi}_u)^{d_{u,l,j}} \Big)\Big]+O_F(1),\label{eq:argument-J4}
\end{align}
where the term $O_F(1)$ arises from $\arg(1+O_{F}(e^{-\Cr{conductor-dependence-1}\sigma}))$. Moreover, similarly to the proof of Proposition \eqref{prop:zfr1}, the summation in \eqref{eq:argument-J4} is nonzero due to our assumption in \eqref{assump:assumption 1}.

 For each $1 \le r \le m_u$, we write $\mu_{\widetilde{\pi}_u}(r)=a_{u,r}+ib_{u,r}$.  From \eqref{eq:FE for L(s,pi)} and Stirling's formula,  there exists a constant $Y_{\pi_u}$ depending only on $\widetilde{\pi}_u$ and $\sigma$ such that for large $|t|$
\begin{align}
	L(1-s,\pi_u)
	&=Y_{\pi_u} L(s,\widetilde{\pi}_u)\mathfrak{q}_{\pi_u}^{it}(2\pi e)^{-im_ut}e^{-\frac{m_u\pi t}{2}}|t|^{m_us- \frac{m_u}{2}}|t|^{-i\sum_{r=1}^{m_u}b_{u,r}}(1+O_{\pi_u}(1/|t|)).
\end{align}
Thus, for any $j \in J$, there exists a  constant $Y_j$ depending only on $F_j$, and $\sigma$ such that
\begin{multline}\label{eq:contribution-L}
     \prod_{u=1}^VL(1-s,{\pi}_u)^{\sum_{l=0}^{k_u}d_{u,l, j}} =Y_j\Big(\prod_{u=1}^V(L(s,\widetilde{\pi}_u)\mathfrak{q}_{\pi_u}^{it}|t|^{-i\sum_{r=1}^{m_u}b_{u,r}})^{\sum_{l=0}^{k_u}d_{u,l, j}}\Big)\\
    \times ((2\pi e)^{-it}e^{-\frac{\pi t}{2}}|t|^{s-\frac{1}{2}})^{\deg_{\dim}(F(s,\bm{\pi}))}(1+O_F(1/|t|)).
\end{multline}
Furthermore, by \eqref{eq:asymp B}, there exists a constant $Y'_j$ depending only on $F_j$ such that
\begin{align}
     \prod_{u=1}^V \prod_{l=0}^{k_u} B(s,l,\widetilde{\pi}_u)^{d_{u,l,j}} 
    &=Y'_j (\log s)^{\deg_{\mathrm{der}}(F(s,\bm{\pi}))}(1+O_F(1/\log |t|)).\label{eq:the contribution of B(s,l,pi) in theorem 1.1}
\end{align}

Putting together \eqref{eq:contribution-L} and \eqref{eq:the contribution of B(s,l,pi) in theorem 1.1} in \eqref{eq:argument-J4}, we have  for large $T$
\begin{multline}
   \arg(F(E'_{1}+iT, \bm{\pi})) = \arg\big(\sum_{j\in J}c_jY_jY'_j\prod_{u=1}^V(L(1-E'_{1}-iT,\widetilde{\pi}_u)\mathfrak{q}_{\pi_u}^{-iT}T^{-i\sum_{r=1}^{m_u}b_{u,r}})^{\sum_{l = 0}^{k_u}d_{u,l, j}}\big)\\
   +\deg_{\dim}(F(s,\bm{\pi}))\arg\Big((2\pi e)^{iT}e^{\frac{\pi T}{2}}T^{\frac{1}{2}-E'_{1}-iT}\Big)+\deg_{\mathrm{der}}(F(s,\bm{\pi}))\arg( \log (1-E'_{1}-iT))+O_F(1).\label{eq:arg(F(E1+iT,pi))}
\end{multline}
The second summand on the right side of \eqref{eq:arg(F(E1+iT,pi))} contributes 
\begin{align}\label{eq:second-term-J4}
   \deg_{\dim}(F(s,\bm{\pi}))(T\log(2\pi e) - T\log T)+O_F(1),
\end{align}
and the third summand in \eqref{eq:arg(F(E1+iT,pi))} contributes $O_F(1)$.  

    It remains to compute the contribution from the first summand in \eqref{eq:arg(F(E1+iT,pi))}. Without loss of generality, we assume that $1\in J$. Since the $L$-functions $L(s, \widetilde{\pi}_u)$ are in the region of absolute convergence for $\Real(s) = 1-E'_1$ (given that $E'_1$ is chosen sufficiently small, $E'_1 < E_1 <0$), their arguments are  bounded by $O_{\pi_u}(1)$.  
    Therefore, the argument for when $j=1$ is 
     \begin{align*}
         \arg(c_1Y_1Y'_1\prod_{u=1}^V(L(1-E'_{1}-iT,\widetilde{\pi}_u)\mathfrak{q}_{\pi_u}^{-iT}T^{-i\sum_{r=1}^{m_u}b_{u,r}})^{\sum_{l=0}^{k_u}d_{u,l, 1}}) 
         =-T\deg_{\mathrm{cond}}(F(s,\bm{\pi}))+O_F(\log T).
     \end{align*}
     Now the first summand on the right side of  \eqref{eq:arg(F(E1+iT,pi))} can be rewritten as
     \begin{multline}
      \arg\Bigg(\sum_{j\in J}\frac{c_jY_{j}Y'_{j}\prod_{u=1}^V(L(1-E'_{1}-iT,\widetilde{\pi}_u)\mathfrak{q}_{\pi_u}^{-iT}T^{-i\sum_{r=1}^{m_u}b_{u,r}})^{\sum_{l=0}^{k_u}d_{u,l, j}}}{c_1Y_{1}Y'_{1}\prod_{u=1}^V(L(1-E'_{1}-iT,\widetilde{\pi}_u)\mathfrak{q}_{\pi_u}^{-iT}T^{-i\sum_{r=1}^{m_u}b_{u,r}})^{\sum_{l=0}^{k_u}d_{u,l, 1}}}\Bigg)\\
      -T\deg_{\mathrm{cond}}(F(s,\bm{\pi}))+O_F(\log T).
      \label{eq:first summand contribution}
     \end{multline}
    Since for any $j \in J$ (including $j=1$ by assumption), we have
     \begin{align}
       &\prod_{u=1}^V\mathfrak{q}_{\pi_u}^{-iT\sum_{l=0}^{k_u}d_{u,l,j}} = \exp(-iT\sum_{u=1}^V \log \mathfrak{q}_{\pi_u}\sum_{l=0}^{k_u}d_{u,l,j}) = \exp(-iT\deg_{\cond}(F(s,\bm{\pi}))).\label{eq:removing q pi in arg} 
     \end{align} 
   The first summand in \eqref{eq:first summand contribution} then equals 
     \[
     \arg\Bigg(\sum_{j\in J}\frac{c_jY_{j}Y'_{j}\prod_{u=1}^V(L(1-E'_{1}-iT,\widetilde{\pi}_u)T^{-i\sum_{r=1}^{m_u}b_{u,r}})^{\sum_{l=0}^{k_u}d_{u,l, j}}}{c_1Y_{1}Y'_{1}\prod_{u=1}^V(L(1-E'_{1}-iT,\widetilde{\pi}_u)T^{-i\sum_{r=1}^{m_u}b_{u,r}})^{\sum_{l=0}^{k_u}d_{u,l, 1}}}\Bigg) =O_F(\log T),
     \]
     where we use the fact that the $L$-functions $L(s,\widetilde{\pi}_u)$ are in the region of absolute convergence.
     Therefore, the contribution from the first summand is 
     \[
     -T\deg_{\mathrm{cond}}(F(s,\bm{\pi}))+O_F(\log T).
     \]

Combining these estimates, the total asymptotic for $J_4$ is
\begin{align*}
J_4 &=  \deg_{\dim}(F(s,\bm{\pi}))(T\log T-T\log(2\pi e))+T\deg_{\mathrm{cond}}(F(s,\bm{\pi}))+O_F(\log T).
\end{align*}
Therefore, the final formula for $N_{F(s, \bm{\pi})}(0,T)$ using our estimates for $J_1, J_2, J_3,$ and $J_4$ becomes
\begin{align*}
    N_{F(s, \bm{\pi})}(0,T) &= \frac{1}{2\pi}(J_1+J_2+J_3+J_4) \\
    &= \deg_{\dim}(F(s,\bm{\pi}))\frac{T}{2\pi}\log \frac{T}{2\pi e}+\frac{T}{2\pi}\deg_{\mathrm{cond}}(F(s,\bm{\pi}))- \frac{T}{2\pi}\log n_F+ O_F(\log T), 
\end{align*}
which can then be rearranged into the desired form stated in the theorem.


\section{Zeros of \texorpdfstring{$F(s,\bm{\pi})$}{F(s,π)} near the critical line}\label{sec:zeros near critical line}
In this section, we prove Theorem \ref{thm:zeros-near-1/2}, which demonstrates that almost all nontrivial zeros of $F(s,\bm{\pi})$ are concentrated near the critical line. This extends the well-known phenomenon for the Riemann zeta function to our much more general setting.
\begin{lem}\label{lem:lemma 6.1}
    Let $F(s,\bm{\pi})\in \mathcal{B}$ be defined as in \eqref{def:F} satisfying \eqref{assump:assumption 1}, and let $\rho_F = \beta_F+i\gamma_F$ denote the nontrivial zeros of $F(s,\bm{\pi})$. For any large $T$, and a fixed real number $b<D$ ($D$ taken from the proof of Lemma \ref{lem:local-zeros}), we have
    \[
    2\pi\sum_{\substack{T<\gamma_F<2T\\\beta_F>b}}(\beta_F-b) = \int_T^{2T}\log |F(b+it,\bm{\pi})|dt-T\log\left\lvert\frac{\eta_{n_F}}{n_F^b}\right\rvert+O_F(\log T).
    \]
\end{lem}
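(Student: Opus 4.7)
My plan is to apply Littlewood's lemma to $F(s,\bm{\pi})$ on a rectangle $R$ with left edge $\Real(s)=b$, right edge $\Real(s)=D^{*}$ (chosen below, growing mildly with $T$), and horizontal edges at $\Imag(s)\in\{T,2T\}$. After a negligible adjustment of $T$ by at most a unit---needed to avoid zeros on the horizontal edges and costing $O_F(\log T)$ by Lemma~\ref{lem:local-zeros}---and with Proposition~\ref{prop:zfr1} ruling out zeros on the right edge, Littlewood's identity yields
\[
2\pi\sum_{\substack{T<\gamma_F<2T\\\beta_F>b}}(\beta_F-b) = \int_T^{2T}\log|F(b+it,\bm{\pi})|\,dt - \int_T^{2T}\log|F(D^{*}+it,\bm{\pi})|\,dt + \int_b^{D^{*}}\bigl[\arg F(\sigma+2iT,\bm{\pi})-\arg F(\sigma+iT,\bm{\pi})\bigr]\,d\sigma.
\]
The strategy is to show that the last two terms combine to $-T\log|\eta_{n_F}/n_F^b| + O_F(\log T)$.

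For the right-edge integral, I write $F(s,\bm{\pi})=(\eta_{n_F}/n_F^{s})(1+h(s))$ with $h(s)=\sum_{n>n_F}(\eta_n/\eta_{n_F})(n_F/n)^{s}$, and choose $D^{*}=C\log T$ for $C>2/\log(1+1/n_F)$. Combining Lemma~\ref{lem:tail of series} with the coefficient bound in Lemma~\ref{lem:bound of coeff} yields $|h(D^{*}+it)|\ll_F T^{-\alpha C}$ with $\alpha=\log(1+1/n_F)$; hence $|\log|1+h(D^{*}+it)||\ll T^{-2}$ and
\[
\int_T^{2T}\log|F(D^{*}+it,\bm{\pi})|\,dt = T\log|\eta_{n_F}/n_F^{D^{*}}| + O_F(1).
\]
For the horizontal arg integrals, decompose $\arg F(\sigma+it,\bm{\pi}) = \arg\eta_{n_F}-t\log n_F+\arg(1+h(\sigma+it))$. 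The linear-in-$t$ term contributes exactly $-T(D^{*}-b)\log n_F$ after integration, and the residual $\arg G$ piece (with $G:=1+h$) is handled by splitting $[b,D^{*}]$ into $[b,D_0]$ (with $D_0$ a fixed large constant) and $[D_0,D^{*}]$. On the far interval $[D_0,D^{*}]$, the pointwise estimate $|\arg G(\sigma+it)|\ll|h(\sigma+it)|$ combined with geometric decay in $\sigma$ gives an $O_F(1)$ contribution. On the near interval $[b,D_0]$, I use the identity $\arg G(\sigma+it) = \arg G(D_0+it)-\Imag\int_\sigma^{D_0}(F'/F)(\tau+it)\,d\tau$ (valid since $G'/G=F'/F+\log n_F$ differs from $F'/F$ by a real constant); Lemmas~\ref{lem:log-derivative} and~\ref{lem:local-zeros} then give $\arg G(\sigma+it)=O_F(\log T)$ uniformly in $\sigma$, so the integral over the fixed-length interval $[b,D_0]$ is $O_F(\log T)$.

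Assembling the pieces and invoking the exact cancellation
\[
-T\log|\eta_{n_F}/n_F^{D^{*}}| - T(D^{*}-b)\log n_F = -T\log|\eta_{n_F}/n_F^b|,
\]
the $D^{*}$-dependence vanishes and the total error is $O_F(\log T)$, proving the claim. The main obstacle is the right-edge integral: a fixed-$D$ rectangle (as might be suggested by the statement's reference to the $D$ from Lemma~\ref{lem:local-zeros}) would yield only an $O(T)$ error from a naive pointwise bound on $\log|1+h|$, so the crux is to let $D^{*}=C\log T$ grow with $T$, which in turn forces the $[b,D_0]\cup[D_0,D^{*}]$ split of the horizontal $\arg G$ integral in order to keep its contribution $O_F(\log T)$.
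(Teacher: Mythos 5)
Your proof is correct, but it takes a genuinely different route from the paper's. The paper applies Littlewood's lemma to the normalized function $G(s,\bm{\pi}) := F(s,\bm{\pi})/(\eta_{n_F}/n_F^{s})$ with a \emph{fixed} right edge at $\Real(s)=D$; this normalization bakes the $-T\log|\eta_{n_F}/n_F^b|$ term into $\log|G(b+it)|$ from the start and removes the linear-in-$t$ growth of the argument, so there is nothing to cancel at the end. For the right-edge term $\int_T^{2T}\log|G(D+it)|\,dt$ the paper then invokes Cauchy's theorem to push the contour rightward to $\Real(s)=D'\to\infty$, using $\log G(\sigma+it)\ll_F(1+1/n_F)^{-\sigma}$ to make all pieces $O_F(1)$ --- this is how they get an $O_F(1)$ right edge without growing $D$. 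For the horizontal $\arg G$ integrals the paper applies Jensen's theorem to $H_T(s)=\tfrac12\bigl(G(s+iT,\bm{\pi})+\overline{G(\overline{s}+iT,\bm{\pi})}\bigr)$ to count sign changes of $\Real G$ and deduce $\arg G\ll_F\log T$ uniformly on $[b,D]$. You instead apply Littlewood to $F$ directly with a \emph{growing} right edge $D^{*}=C\log T$, extract the main term by tracking the $-t\log n_F$ piece of $\arg F$ across the horizontal edges, and observe the exact cancellation with the right-edge contribution. Your bound on $\arg G$ is also obtained differently: the near-interval bound uses the $F'/F$ zero expansion from Lemma~\ref{lem:log-derivative} together with the local zero count of Lemma~\ref{lem:local-zeros}, rather than a fresh Jensen argument. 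Both routes are valid and yield the same $O_F(\log T)$ error; yours reuses the log-derivative machinery already built in Section~\ref{sec:asymp-zero}, while the paper's Cauchy-push plus Jensen approach keeps $D$ fixed and avoids the cancellation bookkeeping.
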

\begin{proof}
    Write $s = \sigma +it$. For a real $b<D$, by applying Littlewood's lemma to 
\begin{equation}\label{def:G}
    G(s,\bm{\pi}) := \frac{F(s,\bm{\pi})}{\eta_{n_F}/n_F^s},
    \end{equation}
    we obtain \begin{multline}\label{eq:littlewood lemma application}
        2\pi\sum_{\substack{T<\gamma_F<2T\\\beta_F>b}}(\beta_F-b) = \int_T^{2T}\log |G(b+it,\bm{\pi})|dt-\int_T^{2T}\log |G(D+it,\bm{\pi})|dt\\
        +\int_b^D \arg G(\sigma+i2T,\bm{\pi}) d\sigma-\int_b^D \arg G(\sigma+iT,\bm{\pi})d\sigma.
    \end{multline}
    Note that by Lemma \ref{lem:tail of series}, $\lim\limits_{\sigma\rightarrow\infty}G(\sigma+it,\bm{\pi})=1$ for any fixed $t$, so in \eqref{eq:littlewood lemma application} we take the branch of $\arg(G(s,\bm{\pi}))$ such that $\arg(G(s,\bm{\pi}))\rightarrow 0$ as $\sigma\rightarrow \infty$. 
    
    We first show that the last two integrals in \eqref{eq:littlewood lemma application} can be absorbed into the error term $O_F(\log T)$. Define
    \[
    H_T(s,\bm{\pi}) := \frac{G(s+iT,\bm{\pi})+\overline{G(\overline{s}+iT,\bm{\pi})}}{2}.
    \]
    Note that $H_T(\sigma,\bm{\pi}) = \Real(G(\sigma+iT,\bm{\pi}))$. Since the argument can change by at most $\pi$ between consecutive zeros, if $H_T(\sigma,\bm{\pi})$ has $n$ zeros in $\sigma \in [b,D]$, then for any $\sigma \in [b,D]$ 
    \[
    |\arg G(\sigma+iT, \bm{\pi})|\le \pi n+O_F(1).
    \]
    Hence, it suffices to bound the number of zeros of $H_T(s,\bm{\pi})$ in the circle with center $D$ and radius $D-b$. Let $n(R)$ be the number of zeros of $H_T(s,\bm{\pi})$ counted with multiplicity in the circle with center $D$ and radius $R$. By Jensen's theorem, we have
   \begin{align}\label{eq:Jensen 3}
   \int_0^{D-b+1} \frac{n(R)}{R} dR &= \frac{1}{2\pi} \int_0^{2\pi} \log\left|H_T\left(D+(D-b+1)e^{i\theta},\bm{\pi}\right)\right| d\theta- \log|H_T(D,\bm{\pi})|. 
\end{align}
From \eqref{eq:F-growth} and the definition of $H_T(s,\bm{\pi})$, we obtain from \eqref{eq:Jensen 3} 
\begin{align}\label{eq:Jensen5}
    \int_0^{D-b+1} \frac{n(R)}{R} dR \ll_F \log T. 
\end{align}
Furthermore, we observe that
\begin{equation}\label{eq:Jensen4}
\int_0^{D-b+1} \frac{n(R)}{R} dR \ge \int_{D-b}^{D-b+1} \frac{n(R)}{R} dR 
\gg_F n(D-b). 
\end{equation}
Hence, combining \eqref{eq:Jensen5} and \eqref{eq:Jensen4}, we finally obtain that for any $\sigma\in [b,D]$
\begin{align*}
    \arg G(\sigma+iT,\bm{\pi}) \ll n(D-b)\ll_F \log T,
\end{align*}
which shows that the last two terms in \eqref{eq:littlewood lemma application} can be bounded by the error term in the lemma.

The remaining task is to estimate the second term in \eqref{eq:littlewood lemma application}. Notice that 
\begin{align*}
   \left|\int_T^{2T}\log  \left|G(D+it,\bm{\pi})\right|dt\right| = \left|\int_T^{2T} \Real(\log G(D+it,\bm{\pi}))dt\right| \le  \left|\int_T^{2T}\log G(D+it,\bm{\pi})dt\right|.
\end{align*}
For our choice of $D$, we have $\Real(G(s,\bm{\pi}))\ge 1/2$ for any $\sigma \ge D$. Therefore, by Cauchy's theorem and for sufficiently large $D'>D$, we have
\begin{multline}\label{eq:cauchy theorem log G}
    \left|\int_T^{2T}\log G(D+it,\bm{\pi})dt\right|\le \left|\int_D^{D'}\log G(\sigma+iT,\bm{\pi})d\sigma\right|\\
    +\left|\int_D^{D'}\log G(\sigma+2iT,\bm{\pi})d\sigma\right| +\left|\int_T^{2T}\log G(D'+it,\bm{\pi})dt\right|.
\end{multline} By Lemma \ref{lem:tail of series} and Taylor expansion, we obtain
\begin{equation}\label{eq:log G(sigma+it,pi)}
\log G(\sigma+it,\bm{\pi}) \ll_F (1+1/n_F)^{-\sigma}.
\end{equation}
Substituting \eqref{eq:log G(sigma+it,pi)} into \eqref{eq:cauchy theorem log G}, we have as we take $D'$ to infinity
\begin{align*}
     \left|\int_T^{2T}\log G(D+it,\bm{\pi})dt\right|&\ll_F (1+1/n_F)^{-D}+ T(1+1/n_F)^{-D'} \ll_F 1.
\end{align*} 
Now substitute all estimates back to \eqref{eq:littlewood lemma application}, we have
\begin{align*}
    2\pi\sum_{\substack{T<\gamma_F<2T\\\beta_F>b}}(\beta_F-b) &= \int_T^{2T}\log |G(b+it,\bm{\pi})|dt+O_F(\log T).
\end{align*}
Thus the lemma follows from the definition of $G(s,\bm{\pi})$ in \eqref{def:G}.
\end{proof}

The following lemma establishes a bound for the sum of the horizontal distances of the zeros from the critical line. This result is a direct consequence of Lemma \ref{lem:lemma 6.1} and the strong second-moment bound in \eqref{eq:sharp second moment bound}.

\begin{lem}\label{lem:number of zero-right-half}
 Let $F(s,\bm{\pi})\in \mathcal{B}$ be defined as in \eqref{def:F} satisfying \eqref{assump:assumption 1}. Assume that the set of $L$-functions appearing in the representation \eqref{def:F} of $F(s,\bm{\pi})$ satisfies the strong second-moment bound in \eqref{eq:sharp second moment bound}. Denote the nontrivial zeros of $F(s,\bm{\pi})$ by $\rho_F = \beta_F + i\gamma_F$. Then for any large $T$, we have
    \[
    \sum_{\substack{T<\gamma_F<2T\\\beta_F>1/2}}\Big(\beta_F-\frac{1}{2}\Big) \ll_F T\log\log T.
    \]
\end{lem}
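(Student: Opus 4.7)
The plan is to apply Lemma \ref{lem:lemma 6.1} with the parameter $b=1/2$, which converts the quantity we need to bound into an integral of $\log|F|$ along the critical line:
\[
2\pi \sum_{\substack{T<\gamma_F<2T\\\beta_F>1/2}}(\beta_F - 1/2) = \int_T^{2T} \log|F(\tfrac{1}{2}+it, \bm{\pi})|\, dt + O_F(T).
\]
Since the left-hand side is nonnegative, it suffices to prove the one-sided upper bound $\int_T^{2T} \log|F(\tfrac{1}{2}+it, \bm{\pi})|\, dt \ll_F T\log\log T$.

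To reduce this to a statement about the individual $L$-functions, I would use the crude pointwise majorization
\[
|F(s,\bm{\pi})| \le \sum_{j=1}^M |c_j|\prod_{u,l}|L^{(l)}(s,\pi_u)|^{d_{u,l,j}} \ll_F \max\Big(1,\max_{u,l}|L^{(l)}(s,\pi_u)|\Big)^{S},
\]
where $S := \max_j \sum_{u,l} d_{u,l,j}$. Taking logarithms and using $\log^+(\max_i x_i) \le \sum_i \log^+ x_i$ yields
\[
\log|F(\tfrac{1}{2}+it, \bm{\pi})| \le O_F(1) + S\sum_{u,l}\log^+|L^{(l)}(\tfrac{1}{2}+it, \pi_u)|.
\]
Each individual term is then controlled by Jensen's inequality applied to $\log(1+x^2)$:
\[
\int_T^{2T} \log^+|L^{(l)}(\tfrac{1}{2}+it, \pi_u)|\, dt \le \frac{T}{2}\log\Big(1 + \frac{1}{T}\int_T^{2T}|L^{(l)}(\tfrac{1}{2}+it, \pi_u)|^2\, dt\Big),
\]
so everything reduces to obtaining a polylogarithmic second-moment bound for each derivative $L^{(l)}$.

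To establish this second-moment bound, I would apply Cauchy's integral formula on a circle of radius $r$ centered at $\tfrac{1}{2}+it$, followed by Cauchy--Schwarz, to obtain
\[
|L^{(l)}(\tfrac{1}{2}+it, \pi_u)|^2 \ll_l r^{-2l}\int_0^{2\pi}\bigl|L\bigl(\tfrac{1}{2}+r\cos\theta + i(t+r\sin\theta), \pi_u\bigr)\bigr|^2\, d\theta.
\]
Integrating over $t\in[T,2T]$ and swapping the order reduces the task to bounding second moments of $L$ on nearby vertical lines $\sigma = \tfrac{1}{2}+r\cos\theta$ over an interval of length comparable to $T$. For $\cos\theta \ge 0$ the bound follows from the hypothesis \eqref{eq:sharp second moment bound} combined with convexity, interpolating against the absolute-convergence region $\sigma>1$. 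For $\cos\theta < 0$ the functional equation of $L(s,\pi_u)$, together with Stirling's formula applied to the archimedean factors as in \eqref{eq:L(1-s,pi) asymp}, converts the bound on the line $\sigma = \tfrac{1}{2}-r|\cos\theta|$ into one on $\sigma = \tfrac{1}{2}+r|\cos\theta|$ times a factor of order $T^{2m_u r|\cos\theta|}$. Choosing $r = c/\log T$ for a sufficiently small $c = c(F)>0$ keeps this factor $O_F(1)$ while only losing $r^{-2l} = O((\log T)^{2l})$, giving
\[
\int_T^{2T}|L^{(l)}(\tfrac{1}{2}+it, \pi_u)|^2\, dt \ll_F T(\log T)^{\eta + 2l}.
\]
Feeding this estimate into Jensen's inequality and summing over the finitely many pairs $(u,l)$ produces the desired $\int_T^{2T}\log|F(\tfrac{1}{2}+it,\bm{\pi})|\, dt \ll_F T\log\log T$.

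The main obstacle is the second-moment estimate for the derivatives $L^{(l)}$: Cauchy's formula forces one to work on vertical lines slightly off the critical line, and on any such line with $\sigma<1/2$ the functional equation introduces polynomial growth in $T$. Balancing this against the $r^{-2l}$ loss from Cauchy's formula is what dictates the specific choice $r \asymp 1/\log T$, which just barely keeps every intermediate moment polylogarithmic and allows the overall $T\log\log T$ bound to survive.
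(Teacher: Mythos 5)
Your first reduction — apply Lemma~\ref{lem:lemma 6.1} with $b=1/2$, majorize $|F|$ pointwise by a power of $\max(1,\max_{u,l}|L^{(l)}|)$, and pass to an average via Jensen's inequality — is exactly how the paper begins. Where you part ways is in controlling the mean of $\log^+|L^{(l)}|$ on the critical line. The paper stays on the critical line throughout: it telescopes $L^{(l)}/L = (L'/L)(L''/L')\cdots(L^{(l)}/L^{(l-1)})$, bounds the $1/(2l)$-th moment of this quotient by $T\sqrt{\log T}$ using Lemma~\ref{lem:log-derivative}, \cite[Lemma 4.1]{levinsonmontgomery}, and Lemma~\ref{lem:local-zeros}, and then interpolates against \eqref{eq:sharp second moment bound} by H\"older's inequality with exponents $p=(4l+1)/(4l)$ and $q=4l+1$, obtaining a sublogarithmic bound on the fractional $\nu_{u,l}$-th moment of $L^{(l)}$ with $\nu_{u,l}=2/(4l+1)$. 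You instead use Cauchy's formula on a circle of radius $r\asymp 1/\log T$ together with the reflection formula, which moves the problem to second moments of $L$ on nearby vertical lines. Your choice of $r$ cleanly balances the $r^{-2l}$ loss against the $T^{2m_ur|\cos\theta|}$ gain from the functional equation; the resulting exponent $\eta+2l$ is worse than the paper's $(2l+\eta)/(4l+1)$ but still polylogarithmic, which is all Jensen's inequality needs. The one step you assert but do not supply is that the hypothesis ``combined with convexity'' controls $\int_T^{2T}|L(\sigma+it,\pi_u)|^2\,dt$ on lines $\sigma = 1/2 + r\cos\theta > 1/2$: the second moment over a finite $t$-interval is not literally log-convex in $\sigma$, and one must first pass to a Gaussian-weighted version (multiplying $L(s,\pi_u)$ by something like $(s-1)^{\omega_{\pi_u}}e^{(s-1/2-iT_0)^2/T^2}$ with $T_0\asymp T$ to obtain an entire, rapidly decaying function), apply Gabriel's convexity theorem on the strip $1/2\le\sigma\le 2$, and then transfer back, controlling the Gaussian tails via the convexity bound $|L(\sigma+it,\pi_u)|\ll_{\pi_u}|t|^{O_{\pi_u}(1)}$. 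This is standard material, so your argument ultimately closes, but it is a genuine extra analytic lemma which the paper's H\"older/log-derivative route is designed precisely to avoid; in exchange, your route is conceptually more elementary, trading the paper's zero-density input for a moment-convexity input.
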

\begin{proof}
    By Lemma \ref{lem:lemma 6.1}, it suffices to prove 
    \[
    \int_T^{2T}\log \Big|F\Big(\frac{1}{2}+it,\bm{\pi}\Big)\Big|dt\ll_F T\log\log T.
    \]
    By definition of $F(s,\bm{\pi})$, we have $F(s,\bm{\pi})$ is bounded above by
    \begin{align*}
         \sum_{j=1}^{M} |c_j|  \prod_{u=1}^V \prod_{l=0}^{k_u}  |L^{(l)}(s, \pi_u)|^{d_{u,l,j}} \le \Big(\sum_{j=1}^{M} |c_j| \Big)\Big(1+\sum_{u=1}^V\sum_{l=0}^{k_u}|L^{(l)}(s,\pi_u)|\Big)^{\sum_{u=1}^{V}\sum_{l=0}^{k_u}d_{u,l,j}}.
    \end{align*}
    For any $1 \le u \le V$ and $0 \le l \le k_u$, let $\nu_{u,l} > 0$ and $\nu :=\min_{1 \le u \le V,\,  0 \le l \le k_u}\{\nu_{u,l}\}$. We have
    \begin{align*}
        \quad\int_T^{2T}\log \Big|F\Big(\frac{1}{2}+it,\bm{\pi}\Big)\Big|dt
         &\ll_F \int_T^{2T}\log\Big(1+\sum_{u=1}^V\sum_{l=0}^{k_u}\Big|L^{(l)}\Big(\frac{1}{2}+it,\pi_u\Big)\Big|\Big)dt+T\\
         & \ll_F \frac{1}{\nu}\int_T^{2T}\log\max\Bigg\{1, \max_{\substack{1 \le u \le V \\ 0 \le l \le k_u}}\Big\{\Big|L^{(l)}\Big(\frac{1}{2}+it,\pi_u\Big)\Big|^{\nu_{u,l}}\Big\}\Bigg\}dt+T.
    \end{align*}
    By Jensen's inequality, the main term is
    \begin{align*}
         &\ll_F T\log\Bigg(\frac{1}{T}\int_T^{2T}\max\Bigg\{1, \max_{\substack{1 \le u \le V \\ 0 \le l \le k_u}}\Big\{\Big|L^{(l)}\Big(\frac{1}{2}+it,\pi_u\Big)\Big|^{\nu_{u,l}}\Big\}\Bigg\}dt\Bigg).
    \end{align*}
    It follows that 
    \begin{align*}
        \int_T^{2T}\log \Big|F\Big(\frac{1}{2}+it,\bm{\pi}\Big)\Big|dt \ll_F T \log\Bigg(1+\frac{1}{T}\int_T^{2T}\Big(\sum_{u=1}^{V}\sum_{l=0}^{k_u} \Big|L^{(l)}\Big(\frac{1}{2}+it, \pi_u\Big)\Big|^{\nu_{u,l}}\Big)dt\Bigg)+T.
    \end{align*}
   Therefore, to complete the proof, it suffices to show that for each $1 \le u \le V$ and $0 \le l \le k_u$, there exists $\nu_{u,l} > 0$ such that
    \begin{equation}\label{eq:goal}
    \int_T^{2T}\Big|L^{(l)}\Big(\frac{1}{2}+it,\pi_u\Big)\Big|^{\nu_{u,l}}dt\ll_{\pi_u,l,\eta} T(\log T)^{(2l+\eta)/(4l+1)},
    \end{equation}
    where $\eta$ is the constant assumed in \eqref{eq:sharp second moment bound}.
      
    To prove \eqref{eq:goal}, we first claim that for any integer $l \ge 0$
    \begin{equation}\label{eq:integral of log-derivative}
        \int_{T}^{2T} \Big|\frac{L^{(l)}}{L}\Big(\frac{1}{2}+it, \pi_u\Big)\Big|^{1/(2l)} \,dt  \ll_{\pi_u,l} T \sqrt{\log T}.
    \end{equation}
    Applying H\"older's inequality, we obtain
\[
\int_{T}^{2T} \Big|L^{(l)}\Big(\frac{1}{2}+it, \pi_u \Big)\Big|^{\nu_{u,l}} dt \le \Big(\int_{T}^{2T} \Big|\frac{L^{(l)}}{L}\Big(\frac{1}{2}+it, \pi\Big)\Big|^{p\nu_{u,l}} dt\Big)^{1/p} \Big(\int_{T}^{2T} \Big|L\Big(\frac{1}{2}+it, \pi\Big)\Big|^{q\nu_{u,l}} dt\Big)^{1/q}
\]
with $\nu_{u,l}=\frac{2}{4l+1}$, $p=1+\frac{1}{4l}$ and $q=4l+1$. Applying \eqref{eq:sharp second moment bound} and \eqref{eq:integral of log-derivative}, we have
\begin{equation*}
    \begin{aligned}
        \int_{T}^{2T} \Big|L^{(l)}\Big(\frac{1}{2}+it, \pi_u\Big)\Big|^{\nu_{u,l}} dt &\ll_{\pi_u,l,\eta} (T\sqrt{\log T})^{1/p} (T(\log T)^{\eta})^{1/q}\\
        &\ll_{\pi_u,l,\eta} T(\log T)^{(2l+\eta)/(4l+1)}.
    \end{aligned}
\end{equation*}
This completes the proof of Lemma \ref{lem:number of zero-right-half}.
    
We now prove \eqref{eq:integral of log-derivative}. Let $n \in \mathbb{N}$ be large, $k \in [0, l-1]$ be an integer, and $\rho^{(k)} = \beta^{(k)}+i\gamma^{(k)}$ denote the zeros of $L^{(k)}(s,\pi)$. For $|t-n|\le 1$ and $0<\sigma<1$, we take $F(s,\bm{\pi})=L^{(k)}(s,\pi_u)$ in Lemma \ref{lem:log-derivative} and obtain
\begin{equation}\label{eq:hadamard for L(l+1)/L(l)}
\frac{L^{(k+1)}(s,\pi_u)}{L^{(k)}(s,\pi_u)} = \sum_{|\gamma^{(k)}-n|<2} \frac{1}{s-\rho^{(k)}} + O_{\pi_u,k}(\log n).
\end{equation}
Then, by \eqref{eq:hadamard for L(l+1)/L(l)}, it follows that
\begin{equation}\label{eq:L(l+1)/L(l) bound}
\int_{n-\frac{1}{2}}^{n+\frac{1}{2}}\Big|\frac{L^{(k+1)}}{L^{(k)}}\Big(\frac{1}{2}+it, \pi_u\Big)\Big|^{1/2} dt \ll_{\pi_u,k} \int_{n-2}^{n+2}\Big|\sum_{|\gamma^{(k)}-n|<2} \frac{1}{\frac{1}{2}+it-\rho^{(k)}}\Big|^{1/2} dt + \sqrt{\log n}. 
\end{equation}
Applying \cite[Lemma 4.1]{levinsonmontgomery} and using Lemma \ref{lem:local-zeros} in \eqref{eq:L(l+1)/L(l) bound}, we obtain 
\[
\int_{n-\frac{1}{2}}^{n+\frac{1}{2}}\Big|\frac{L^{(k+1)}}{L^{k)}}\Big(\frac{1}{2}+it, \pi_u\Big)\Big|^{1/2} dt\ll_{\pi_u,k} \sqrt{\log n}.
\]
From this, we have
\begin{align}
  \int_T^{2T} \Big|\frac{L^{(k+1)}}{L^{(k)}}\Big(\frac{1}{2}+it, \pi_u\Big)\Big|^{1/2} dt \ll_{\pi_u,k} T\sqrt{\log T}.\label{eq:bound for L(l+1)/L(l) interval T to 2T}  
\end{align}
By \eqref{eq:bound for L(l+1)/L(l) interval T to 2T} and H\"older's inequality, we have
\begin{equation*}
    \begin{aligned}
        \int_T^{2T} \Big|\frac{L^{(l)}}{L}\Big(\frac{1}{2}+it, \pi_u \Big)\Big|^{1/(2l)} dt &= \int_T^{2T} \Big|\frac{L'}{L}\cdot\frac{L''}{L'}\cdots\frac{L^{(l)}}{L^{(l-1)}}\Big(\frac{1}{2}+it, \pi_u\Big)\Big|^{1/(2l)} dt \ll_{\pi_u,l} T\sqrt{\log T},
    \end{aligned}
\end{equation*} 
which completes the proof of \eqref{eq:integral of log-derivative}.
\end{proof}

We can now prove the first half of Theorem \ref{thm:zeros-near-1/2}, that is, the number of zeros to the right of the critical line is small.
\begin{lem}\label{lem:count of zeros on the right half}
 Let $F(s,\bm{\pi})\in \mathcal{B}$ be defined as in \eqref{def:F} satisfying \eqref{assump:assumption 1}. Assume that the set of $L$-functions appearing in the representation \eqref{def:F} of $F(s,\bm{\pi})$ satisfies the strong second-moment bound in \eqref{eq:sharp second moment bound}. Then, for any $\delta > 0$ and any large $T$, we have
\[
N_F^+\Big(\frac{1}{2}+\delta;T, 2T\Big) \ll_{F} \frac{T\log\log T}{\delta}.
\]
\end{lem}
\begin{proof}
From Lemma \ref{lem:number of zero-right-half}, we have
$$
\sum_{ \substack{T<\gamma_F< 2T \\ \beta_F>1/2+\delta}} \Big(\beta_F-\frac{1}{2}\Big) \le \sum_{ \substack{T<\gamma_F<2T  \\ \beta_F>1/2}} \Big(\beta_F-\frac{1}{2}\Big) \ll_F T\log\log T.
$$
On the other hand, we have
$$
\sum_{ \substack{T<\gamma_F<2T \\ \beta_F>1/2+\delta}} \Big(\beta_F-\frac{1}{2}\Big) \ge \delta N_F^+\Big(\frac{1}{2}+\delta;T,2T \Big).
$$
Combining the two inequalities above gives us the lemma.
\end{proof}

Next we turn our attention to the zeros to the left of the critical line. The following lemma provides an asymptotic formula for the sum of the horizontal distances of nontrivial zeros from an arbitrary vertical line.
From now on, we consider a real $b$ such that
\begin{align}\label{eq:upper bound for b}
    b<\min\{E_1,-1-\Real(\mu_{\pi_u}(r)):u=1,2,\cdots,V;r=1,2,\cdots,m_u\},
\end{align}
where $E_1$ is defined in Proposition \ref{prop:zfr1} and $\mu_{\pi_u}(r)$ are the spectral parameters of $L(s,\pi_u)$.

\begin{lem}\label{lem:asymp of zeros of height T}
Let $F(s,\bm{\pi})\in \mathcal{B}$ be defined as in \eqref{def:F} satisfying \eqref{assump:assumption 1}. 
Denote the nontrivial zeros of $F(s,\bm{\pi})$ by $\rho_F = \beta_F + i\gamma_F$. Then, for any fixed $b$ satisfying \eqref{eq:upper bound for b} and any large $T$, 
\begin{align*}
    2\pi\sum_{\substack{T<\gamma_F<2T}}(\beta_F-b) =\deg_{\dim}(F(s,\bm{\pi})) \left(\frac{1}{2}-b\right)T\log T +O_F(T\log\log T). 
\end{align*}
\end{lem}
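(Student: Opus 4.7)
The plan is to apply Lemma \ref{lem:lemma 6.1} to rewrite the weighted zero sum as an integral of $\log|F(b+it,\bm{\pi})|$, then evaluate this integral using the functional-equation asymptotic of Lemma \ref{lem:FE}.

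Since $b<E_1$ by \eqref{eq:upper bound for b}, Proposition \ref{prop:zfr1} guarantees that every nontrivial zero $\rho_F$ of $F(s,\bm{\pi})$ satisfies $\beta_F>E_1>b$. Moreover, the trivial zeros (clustered near $s=-2n-\mu_{\pi_u}(r)$) have bounded imaginary parts, so for $T$ large no trivial zero has $T<\gamma_F<2T$. Consequently the condition $\beta_F>b$ in Lemma \ref{lem:lemma 6.1} is automatically satisfied, and that lemma yields
\begin{align*}
2\pi\sum_{T<\gamma_F<2T}(\beta_F-b) = \int_T^{2T}\log|F(b+it,\bm{\pi})|\,dt - T\log\Bigl|\frac{\eta_{n_F}}{n_F^b}\Bigr| + O_F(\log T).
\end{align*}
The explicit term $T\log|\eta_{n_F}/n_F^b|=O_F(T)$ is absorbable into the target error $O_F(T\log\log T)$, so it remains to evaluate $\int_T^{2T}\log|F(b+it,\bm{\pi})|\,dt$.

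To evaluate the integral, apply Lemma \ref{lem:FE} with $\bm{\pi}$ and $\widetilde{\bm{\pi}}$ interchanged at $s=1-b-it$; this is admissible since $1-b>3/2$ by \eqref{eq:upper bound for b}. The result gives
\begin{align*}
F(b+it,\bm{\pi}) = (-1)^{\deg_{\mathrm{der}}(F)}\sum_{j\in J}c_j\prod_{u=1}^{V}L(b+it,\pi_u)^{\sum_l d_{u,l,j}}\prod_{l=0}^{k_u}B(1-b-it,l,\widetilde{\pi}_u)^{d_{u,l,j}}\Bigl(1+O_F\Bigl(\frac{1}{\log t}\Bigr)\Bigr).
\end{align*}
Combining \eqref{eq:|L(1-s,pi u)| absolute asymp} (applied with $\sigma=1-b$, using $|L(b+it,\pi_u)|=|L(b-it,\widetilde{\pi}_u)|$) with \eqref{eq:asymp in Lemma 4.3 case 2}, each $j\in J$ summand has magnitude $\asymp_F t^{(1/2-b)\deg_{\mathrm{rk}}(F)}(\log t)^{\deg_{\mathrm{der}}(F)}$, so
\begin{align*}
|F(b+it,\bm{\pi})|\asymp_F t^{(1/2-b)\deg_{\mathrm{rk}}(F)}(\log t)^{\deg_{\mathrm{der}}(F)}|\Xi(t)|,
\end{align*}
where $\Xi(t)$ is a bounded oscillating sum indexed by $j\in J$. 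Integrating the logarithm of the polynomial and logarithmic factors over $[T,2T]$ yields the leading contribution $(1/2-b)\deg_{\mathrm{rk}}(F)\,T\log T+O_F(T\log\log T)$.

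The main obstacle is showing $\int_T^{2T}\log|\Xi(t)|\,dt=O_F(T\log\log T)$. The upper bound is immediate from $|\Xi(t)|\ll_F 1$, but the logarithm can become highly negative wherever $\Xi(t)$ is small. The key point is that the dominant $t$-dependent phases of each summand---coming from $\mathfrak{q}_{\pi_u}^{-it\sum_l d_{u,l,j}}$, $\pi^{im_u t\sum_l d_{u,l,j}}$, and the Stirling phases of the Gamma factors---combine, after the product over $u$, into functions depending only on $\deg_{\mathrm{rk}}(F_j)$ and $\deg_{\mathrm{cond}}(F_j)$, both of which are constant for $j\in J$. After factoring these common phases out of $\Xi(t)$, the leading constant contribution is $\sum_{j\in J}c_j\ne 0$ by \eqref{assump:assumption 1}, while the residual oscillation comes from values of $L(1-b-it,\widetilde{\pi}_u)$ in its absolutely convergent region (which tend to $1$ as $b\to-\infty$) and from mild phase fluctuations in the $B$-factors. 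A Jensen-type argument, analogous to Selberg's treatment of $\int\log|\zeta(1/2+it)|\,dt$, then bounds $\int_T^{2T}\Bigl|\log|\Xi(t)|\Bigr|\,dt$ by $O_F(T)$, completing the proof.
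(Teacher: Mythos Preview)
Your overall strategy matches the paper's: apply Lemma \ref{lem:lemma 6.1} to convert the weighted zero sum into $\int_T^{2T}\log|F(b+it,\bm{\pi})|\,dt$, then evaluate that integral via Lemma \ref{lem:FE}. The divergence is in how the oscillatory factor is handled.

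Your treatment of $\Xi(t)$ has two weak points. First, the assertion that after factoring out the common phases the leading constant is $\sum_{j\in J}c_j$ is not accurate: each $j$-summand also carries $j$-dependent constants (the $Y_{j,F},Y'_{j,F}$ arising from Stirling and the $B$-factors) and residual phases of the form $(1-b-it)^{-i\sum_r b_{u,r}\sum_l d_{u,l,j}}$, and these need not coincide across $j\in J$, so assumption \eqref{assump:assumption 1} alone does not give the nonvanishing you want. Second, the appeal to ``a Jensen-type argument analogous to Selberg's treatment of $\int\log|\zeta(1/2+it)|\,dt$'' is only a gesture: you do not specify the analytic function to which Jensen's formula is applied, nor how its zeros are counted without circularity (the zeros of $\Xi$ are essentially the zeros of $F$ you are trying to control).

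The paper avoids this integral estimate altogether. It fixes a reference index, say $1\in J$, and writes $\log|F(b+it,\bm{\pi})|=M(t)+R(t)$, where $M(t)$ is the logarithm of the single $j=1$ summand and $R(t)$ is the log of the ratio of the full $J$-sum to that summand. The paper then argues $R(t)=O_F(1)$ pointwise: after cancelling the common conductor phase via \eqref{eq:removing q pi in arg}, each term in the ratio is a product of $L(1-b-it,\widetilde{\pi}_u)$-values in the region of absolute convergence and bounded phase factors $(1-b-it)^{-i\sum_r b_{u,r}(\cdots)}$. With the pointwise bound in hand, no Jensen argument is needed; one integrates $M(t)$ directly by inserting the functional equation \eqref{eq:FE for L(s,pi)} and Stirling's formula to obtain $\log|L(b+it,\pi_u)|=m_u(\tfrac{1}{2}-b)\log t+O_F(1)$, which immediately gives the main term $\deg_{\mathrm{rk}}(F)(\tfrac{1}{2}-b)T\log T$, while the $B$-factor contribution in $M(t)$ produces the $O_F(T\log\log T)$ error.
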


\begin{proof}
By \eqref{eq:FE-lemma-2} of Lemma \ref{lem:FE}, for large $|t|$, we have that  $\log|F(b+it,\bm{\pi})|$ equals
\begin{align}\label{eq:log|F(b+it),pi|}
 \log\Big|\sum_{j \in J} c_j\prod_{u=1}^V\Big(L(b+it,{\pi}_u)^{\sum_{l =0}^{k_u}d_{u,l, j}} \prod_{l=0}^{k_u} B(1-b-it,l,\widetilde{\pi}_u)^{d_{u,l,j}} \Big)\Big| + O_F(1).
\end{align}
 To evaluate the main term above, we use the same strategy as in the proof of Theorem \ref{thm:number of zeros}. Without loss of generality, we assume $1\in J$. Then, the summand of \eqref{eq:log|F(b+it),pi|} is equal to 
\begin{multline*}
   \log\Big| c_1\prod_{u=1}^V\Big(L(b+it,{\pi}_u)^{\sum_{l =0}^{k_u}d_{u,l, 1}} \prod_{l=0}^{k_u} B(1-b-it,l,\widetilde{\pi}_u)^{d_{u,l,1}} \Big)\Big|\\
   +\log\Bigg|\sum_{j\in J}\frac{c_j\prod_{u=1}^V\Big(L(b+it,{\pi}_u)^{\sum_{l =0}^{k_u}d_{u,l, j}} \prod_{l=0}^{k_u} B(1-b-it,l,\widetilde{\pi}_u)^{d_{u,l,j}}\Big)}{c_1\prod_{u=1}^V\Big(L(b+it,{\pi}_u)^{\sum_{l =0}^{k_u}d_{u,l, 1}} \prod_{l=0}^{k_u} B(1-b-it,l,\widetilde{\pi}_u)^{d_{u,l,1}}\Big)}\Bigg| := M(t)+R(t),
\end{multline*}
where we denote the first term as $M(t)$ and the second term as $R(t)$.

Upon using the definition of $B(s,l,\pi_u)$ as in \eqref{def:B(k,pi
)}, we have
\begin{align}\label{eq:M(t)}
    M(t)
   &= \sum_{u=1}^V{\sum_{l =0}^{k_u}d_{u,l, 1}}\log|L(b+it,{\pi}_u)|+O_F(\log\log t).
\end{align}
For each $1\le u \le V$, from the functional equation of $|L(b+it, {\pi}_u)|$ in \eqref{eq:FE for L(s,pi)} and the fact that $L(1-b-it, \widetilde{\pi}_u)$ is in the region of absolute convergence, we obtain
\begin{align*}
        \log|L(b+it, {\pi}_u)| &= 
        \sum_{r=1}^{m_u}\Bigg(  \log\Big|\cos\Big(\frac{1-b-it-{\mu_{ {\pi}_u}(r)}}{2}\pi \Big)\Big|\\
        &\quad+\log\Big| \Gamma\Big(\frac{1-b-it+\overline{\mu_{ {\pi}_u}(r)}}{2}\Big)\Big|+\log\Big|\Gamma\Big(\frac{2-b-it-{\mu_{{\pi}_u}(r)}}{2}\Big)\Big|\Bigg)+O_F(1).
    \end{align*}
For each $1 \le r \le m_u$, write $\mu_{\pi_u}(r) := a_{u,r}+ib_{u,r}$.   Thus, for any large $|t|$, we have 
\begin{equation*}
    \begin{aligned}
        \log \left| \Gamma\left( \frac{1 - b - it + \overline{\mu_{\pi_u}(r)}}{2} \right) \right| &= \frac{-b + a_{u,r}}{2} \log \left| \frac{t + b_{u,r}}{2} \right| + \frac{\pi t}{4} + O_F(1);\\
        \log \left| \Gamma\left( \frac{2 - b - it - \mu_{\pi_u}(r)}{2} \right) \right| &= \frac{1-b - a_{u,r}}{2} \log \left| \frac{t - b_{u,r}}{2} \right| + \frac{\pi t}{4} + O_F(1).
    \end{aligned}
\end{equation*}
It follows that
\begin{align}\label{eq:Lof M(t)}
        \log|L(b+it, {\pi}_u)|   = \left(\frac{1}{2}-b\right) m_u\log t+O_F(1).
    \end{align}
    Thus, using \eqref{eq:Lof M(t)} in \eqref{eq:M(t)}, we conclude that 
\[
M(t) = \deg_{\dim}(F(s,\bm{\widetilde{\pi}}))\left(\frac{1}{2}-b\right) \log t
 +O_F(\log\log t).
\]

To compute $R(t)$, we follow the same procedure in \eqref{eq:contribution-L}--\eqref{eq:removing q pi in arg}. Then there exists a constant $Y'_0$ such that for large $|t|$
\begin{align*}
    R(t)=\log\Big|\sum_{j\in J}Y'_0\prod_{u=1}^V(L(1-b-it,\widetilde{\pi}_u)|t|^{-i\sum_{r=1}^{m_u}b_{u,r}})^{\sum_{l=0}^{k_u}(d_{u,l,j}-d_{u,l,1})}\Big| +O_F(1).
\end{align*}
Using that $L(1-b-it, \widetilde{\pi}_u)$ is in the region of absolute convergence and the observation that 
\[
\log\Big||t|^{-i\sum_{r=1}^{m_u}b_{u,r}\sum_{l=0}^{k_u}(d_{u,l,j}-d_{u,l,1})}\Big| \ll_F 1 ,
\]
we then have $R(t) = O_F (1)$.

Combining the bounds for $M(t)$ and $R(t)$ together, we obtain 
\begin{align*}
    \int_T^{2T}\log|F(b+it,\bm{\pi})| \,dt
    &=\deg_{\dim}(F(s,\bm{\widetilde{\pi}})) \Big(\frac{1}{2}-b\Big)T\log T+O_F(T\log\log T).
\end{align*}
Applying Lemma \ref{lem:lemma 6.1}, we conclude that
\begin{align*}
    2\pi\sum_{\substack{T<\gamma_F<2T\\\beta_F>b}}(\beta_F-b) &= \deg_{\dim}(F(s,\bm{\widetilde{\pi}})) \Big(\frac{1}{2}-b\Big)T\log T+O_F(T\log\log T).
\end{align*}
Since $\deg_{\dim}(F(s,\bm{\widetilde{\pi}})) = \deg_{\dim}(F(s,\bm{\pi}))$ and that $b<E_1$, we now obtain Lemma \ref{lem:asymp of zeros of height T}. 
\end{proof}

We now apply the results from Lemmas \ref{lem:number of zero-right-half} and \ref{lem:asymp of zeros of height T} to establish a bound on the number of zeros to the left of the critical line, mirroring the result for the right side.
\begin{lem}\label{lem:count of zeros on the left-half}
Let $F(s,\bm{\pi})$ be defined as in \eqref{def:F} satisfying \eqref{assump:assumption 1}. Assume that the set of $L$-functions appearing in the representation \eqref{def:F} of $F(s,\bm{\pi})$ satisfies the strong second-moment bound in \eqref{eq:sharp second moment bound}. Then, for any $\delta > 0$ and any large $T$, we have
$$
N_F^{-}\Big(\frac{1}{2}-\delta;T,2T\Big) \ll_F \frac{T\log\log T}{\delta}\Big.
$$
\end{lem}

\begin{proof}
Fix a real $b$ satisfying \eqref{eq:upper bound for b}. We can decompose the summation in Lemma \ref{lem:asymp of zeros of height T} as
\begin{multline*}
2\pi\sum_{T<\gamma_F<2T} (\beta_F-b) = 2\pi\sum_{\substack{T<\gamma_F<2T \\ \beta_F>1/2+\delta}} \Big[\Big(\beta_F-\frac{1}{2}\Big) + \Big(\frac{1}{2}-b\Big)\Big] \\
+2\pi\sum_{\substack{T<\gamma_F<2T \\ 1/2-\delta<\beta_F\le1/2+\delta}} \Big[\Big(\beta_F-\frac{1}{2}\Big) + \Big(\frac{1}{2}-b\Big)\Big] +2\pi\sum_{\substack{T<\gamma_F<2T \\ \beta_F\le1/2-\delta}} \Big(\beta_F-b\Big). 
\end{multline*}
By Lemma \ref{lem:number of zero-right-half}, we have
\begin{align}
2\pi\sum_{T<\gamma_F<2T} (\beta_F-b) &\le O_F(T\log\log T) + 2\pi\Big(\frac{1}{2}-b\Big)N_F^+\Big(\frac{1}{2}+\delta;T,2T\Big) \notag\\
&\quad+2\pi\Big(\frac{1}{2}-b\Big)\Big(N_F(T,2T)-N_F^+\Big(\frac{1}{2}+\delta;T,2T\Big)-N_F^-\Big(\frac{1}{2}-\delta;T,2T\Big)\Big) \notag\\
&\quad+2\pi\Big(\frac{1}{2}-\delta-b\Big)N_F^-\Big(\frac{1}{2}-\delta;T,2T\Big).\label{eq:another form in Lemma 6.5}
\end{align}
Applying Theorem \ref{thm:number of zeros}, we have from \eqref{eq:another form in Lemma 6.5} that
\begin{multline}
2\pi\sum_{T<\gamma_F<2T} (\beta_F-b) \le \deg_{\dim}(F(s,\bm{{\pi}})) \Big(\frac{1}{2}-b\Big)T\log T - 2\pi\delta N_F^-\Big(\frac{1}{2}-\delta;T,2T\Big)\\
+ O_F(T\log\log T).\label{eq:asym upper bound in proof of thm 1.5}
\end{multline}
On the other hand, by Lemma \ref{lem:asymp of zeros of height T},
\begin{align}
    2\pi\sum_{T<\gamma_F<2T} (\beta_F-b) 
    &=\deg_{\dim}(F(s,\bm{{\pi}})) \Big(\frac{1}{2}-b\Big)T\log T +O_F(T\log\log T).\label{eq:asym in proof of thm 1.5}
\end{align}
From \eqref{eq:asym upper bound in proof of thm 1.5} and \eqref{eq:asym in proof of thm 1.5}, and the fact that $N_F^-(\frac{1}{2}-\delta;T,2T)$ is nonnegative, we have
\[
N_F^-\Big(\frac{1}{2}-\delta;T,2T\Big) \ll_F \frac{T\log\log T}{\delta}. \qedhere
\]
\end{proof}
\subsection*{Proof of Theorem \ref{thm:zeros-near-1/2}} 
Combining Lemmas \ref{lem:count of zeros on the right half} and \ref{lem:count of zeros on the left-half} gives us Theorem \ref{thm:zeros-near-1/2}.\qed

\subsection*{Proof of Corollary \ref{cor: corollary of thm 1.2}} 
Under Hypothesis  $\bm{\mathrm{A}}$, any $L$-function in the set of
$L$-functions appearing in the representation \eqref{def:F} of $F(s,\bm{\pi})$ satisfies the strong second-moment bound
\begin{equation*}
\int_T^{2T}\Big|L\Big(\frac{1}{2}+it,\pi_u \Big)\Big|^{2}dt \ll_{\pi_u, \varepsilon} T(\log T)^{1+\varepsilon}.
\end{equation*}
The proof can be found, for example, in the work of Milinovich and Turnage-Butterbaugh~\cite[Theorem~1.1]{MT} and Tang and Xiao~\cite[Theorem~1.1]{tangxiao}.  
In addition, for any $L(s,\pi_u) \in \mathcal{R}$, further details can be found in the following references:  
\begin{itemize}
    \item For the Riemann zeta function $\zeta(s)$, see, for example, Hardy and Littlewood \cite{littlewood} and Ingham \cite{ingham}. 
    \item For Dirichlet $L$-functions attached to primitive characters modulo $q$, see, for example, Katsurada and Matsumoto \cite{katsurada}. 
    \item For holomorphic cusp forms of $\Gamma_0(N)$ of weight $k$ with nebentypus $\chi \pmod N$, see, for example, Good \cite{good} and Zhang \cite{zhang}. 
    \item For even Maa{\ss} cusp forms for $\Gamma_0(N)$ with nebentypus $\chi \pmod N$, see, for example, Kuznetsov \cite{kuznetsov} and Zhang \cite{zhang2}. 
\end{itemize}
Combining the strong second-moment bound with Theorem \ref{thm:zeros-near-1/2} completes the proof of the corollary.\qed

\subsection*{Acknowledgements}
The authors are grateful to Jesse Thorner for useful comments and suggestions.  The authors are also grateful to the referees for many valuable suggestions that have greatly increased the clarity and value of the manuscript.

\bibliographystyle{abbrv}
\bibliography{polynomial-zeros}
\end{document}